\theoremstyle{plain}
\newtheorem{lema}{Lemma}[section]
\newtheorem{prop}[lema]{Proposition}
\newtheorem{teo}[lema]{Theorem}
\newtheorem{coro}[lema]{Corollary}
\theoremstyle{remark}
\newtheorem{obs}[lema]{Remark}
\theoremstyle{definition}
\newtheorem{defi}[lema]{Definition}
\newtheorem{ejs}[lema]{Examples}
\newcommand{\bd}{\partial}
\newcommand{\pbd}{\tilde\partial}
\begin{document}

\title[Non-homogeneous Combinatorial Manifolds]{Non-homogeneous Combinatorial Manifolds}

\author[N.A. Capitelli]{Nicolas Ariel Capitelli}
\author[E.G. Minian]{Elias Gabriel Minian}
\thanks{The authors' research is partially supported by Conicet.}

\address{Departamento  de Matem\'atica-IMAS\\
 FCEyN, Universidad de Buenos Aires\\ Buenos
Aires, Argentina}

\email{ncapitel@dm.uba.ar} \email{gminian@dm.uba.ar}

\begin{abstract}
In this paper we extend the classical theory of combinatorial manifolds to the non-homogeneous setting. $NH$-manifolds are polyhedra which are locally like Euclidean spaces of varying dimensions.  We show that many of the properties of classical manifolds remain valid in this wider context. $NH$-manifolds appear naturally when studying Pachner moves on (classical) manifolds. We introduce the notion of $NH$-factorization and prove that $PL$-homeomorphic manifolds are related by a finite sequence of $NH$-factorizations involving $NH$-manifolds.
\end{abstract}

\subjclass[2000]{52B70, 52B22, 57Q10, 57Q15.}

\keywords{Simplicial complexes, combinatorial manifolds, collapses, shellability, Pachner moves.}

\maketitle

\section{Introduction}
The notion of manifold (piecewise linear, topological, differentiable) is 
central in mathematics. An $n$-manifold is an object which is locally like 
the Euclidean space $\mathbb{R}^n$. Concretely, in the piecewise linear setting a 
PL-manifold of dimension $n$ is a polyhedron in which every point has a 
(closed) neighborhood which is a PL-ball of dimension $n$. 

The theory of combinatorial manifolds (which are the triangulations of PL-manifolds) has been widely developed during the last ninety years. J.W. 
Alexander's Theorem on regular expansions, Newman's result on the 
complement of an $n$-ball in an $n$-sphere, Whitehead's Regular 
Neighborhood theory and the s-cobordism theorem are some of its most important
advances \cite{Gla, Hud, Lic, Rou}. More recently Pachner \cite{Pach} 
studied a set of elementary combinatorial operations or \textit{moves}, and showed that any 
combinatorial manifold can be transformed into any other PL-homeomorphic 
one by using these moves (see also \cite{Lic}).

It is well known that any combinatorial $n$-manifold is a homogeneous (or 
pure) simplicial complex, which means that all the maximal simplices have 
the same dimension. It is natural to ask whether it is possible to extend 
the theory of combinatorial manifolds to the non-homogeneous context. More 
concretely, the main goal of this article is to investigate the properties of those polyhedra which 
are locally like Euclidean spaces of varying dimensions (see Figures \ref{fig:ejemplos_nh_variedades_locales} and \ref{fig:ejemplos_nh_variedades}
below). In this paper we introduce the theory of \textit{non-homogeneous manifolds} or $NH$-manifolds, for short. We will show that many 
of the basic properties of (classical) manifolds are also satisfied in this 
much wider setting. 

We investigate shellability in the non-homogeneous context. It is well-known that any shellable complex is homotopy equivalent to a wedge of spheres and that the only shellable manifolds are balls and spheres (see \cite{Bjo} and \cite{Koz}). We prove that every shellable $NH$-manifold is in particular an $NH$-bouquet, which extends the classical result for manifolds. We also study the notion of regular expansion for $NH$-manifolds and prove a generalization of Alexander's Theorem.

Non-homogeneous manifolds appear naturally  when studying Pachner moves between manifolds. We introduce the notion of $NH$-factorization and prove that any two PL-homeomorphic manifolds (with or without boundary) are related by a finite sequence of factorizations involving $NH$-manifolds. When the manifolds are closed, the converse also holds.

\section{Preliminaries}



We start by fixing some notation and terminology. In this paper, all the simplicial complexes that we deal with are assumed to be finite. If a  simplex $\sigma$ is a face of a simplex $\tau$, we will write $\sigma<\tau$ and when $\sigma$ is an immediate face we write $\sigma\prec\tau$. A \emph{principal} or \emph{maximal} simplex in $K$ is a simplex which is not a proper face of any other simplex of $K$ and a \emph{ridge} in $K$ is an immediate face of a maximal simplex. A complex is said to be \emph{homogeneous} of dimension $n$ if all of its principal simplices have dimension $n$. The boundary  $\bd{K}$ of an $n$-homogeneous complex $K$  is the subcomplex generated by the mod $2$ union of the ($n-1$)-simplices. The set of vertices of a complex $K$ will be denoted by $V_K$.

The join of two simplices  $\sigma,\tau$ with $\sigma\cap\tau=\emptyset$ will be denoted by $\sigma\ast\tau$. Also $K\ast L$ will denote the join of the complexes $K$ and $L$. Given a simplex $\sigma\in K$,  $lk(\sigma,K)$ will denote its \emph{link}, which is the subcomplex $lk(\sigma,K)=\{\tau\in K:\ \tau\cap\sigma=\emptyset,\ \tau\ast\sigma\in K\}$, and $st(\sigma,K)=\sigma\ast lk(\sigma,K)$ will denote the (closed) \emph{star} of $\sigma$ in $K$. The union of two complexes $K, L$ will be denoted by $K+L$.

Following \cite{Gla}, arbitrary subdivisions of $K$ will be denoted by $\alpha K,\ \beta K,\ldots$ Derived subdivisions will be denoted by $\delta K$ and the barycentric subdivision by $K'$, as usual. If $\sigma\in K$ and $a\in\overset{\circ}\sigma$, the interior of $\sigma$, then $(\sigma,a)K$ will denote the elementary subdivision of $K$ by starring $\sigma$ in $a$; i.e. the replacing of $st(\sigma,K)$ by $a\ast \bd{\sigma}\ast lk(\sigma,K)$. A \emph{stellar subdivision} $sK$ of $K$ is a finite sequence of elementary starrings. The operation inverse to an elementary starring is called an \emph{elementary weld} and denoted by $(\sigma,a)^{-1}K$. Two complexes $K$ and $L$ are \emph{stellar equivalent} if they are related by a sequence of starrings, welds and (simplicial) isomorphisms. In this case we write $K\sim L$. It is well known that the combinatorial and the stellar theories are equivalent (see for example \cite{Gla,Lic}),  and therefore $K\sim L$ if and only if they are PL-homeomorphic. A class of complexes will be called \emph{PL-closed} if it is closed under PL-homeomorphisms. 

We recall now the basic definitions and properties of combinatorial manifolds. For a comprehensive exposition of the theory of combinatorial manifolds we refer the reader to \cite{Gla, Lic, Rou}.

$\Delta^n$ will denote the $n$-simplex. A combinatorial $n$-ball is a complex which is PL-homeomorphic to $\Delta^n$. A combinatorial $n$-sphere is a complex PL-homeomorphic to $\partial\Delta^{n+1}$. By convention, $\emptyset=\bd{\Delta}^{0}$ is considered a sphere of dimension $-1$. A combinatorial $n$-manifold is a complex $M$ such that for every $v\in V_M$, $lk(v,M)$ is a combinatorial ($n-1$)-ball or ($n-1$)-sphere. It is easy to verify that  $n$-manifolds are homogeneous complexes of dimension $n$. It is well known that the link of any simplex in a manifold is also a ball or a sphere and that the class of $n$-manifolds is PL-closed. It follows that combinatorial balls and spheres are combinatorial manifolds. 

The boundary  $\bd M$ can be regarded as the set of simplices whose links are combinatorial balls. By a classical result of Newman \cite{New} (see also \cite{Gla,Hud,Lic}), if $S$ is a combinatorial $n$-sphere containing a combinatorial $n$-ball $B$, then the closure $\overline{S-B}$ is a combinatorial $n$-ball.

Some global properties of combinatorial manifolds can be stated in terms of \it pseudo manifolds. \rm An $n$-pseudo manifold is an $n$-homogeneous complex $K$ satisfying the following two properties: (a) for every ($n-1$)-simplex $\sigma$, $lk(\sigma,K)$ is a combinatorial $0$-ball or $0$-sphere (or equivalently, every ($n-1$)-simplex is face of at most two $n$-simplices), and (b) given two $n$-simplices $\sigma,\sigma'$, there exists a sequence of $n$-simplices $\sigma=\sigma_0,\ldots,\sigma_k=\sigma'$ such that $\sigma_i\cap\sigma_{i+1}$ is ($n-1$)-dimensional for all $i=0,\ldots,k-1$. It is well known that any connected combinatorial $n$-manifold, or more generally, any triangulated homological manifold, is an $n$-pseudo manifold.

A simplex $\tau$ of a complex $K$ is said to be \emph{collapsible} in $K$ if it has a free face $\sigma$, i.e. a proper face which is not a face of any other simplex of $K$. Note that, in particular, $\tau$ is a maximal simplex and $\sigma$ is a ridge. In this situation, the operation which transforms $K$ into $K-\{\tau,\sigma\}$ is called an \emph{elementary (simplicial) collapse}, and it is usually denoted by $K\searrow^e K-\{\tau,\sigma\}$. The inverse operation is called an \emph{elementary (simplicial) expansion}. If there is a sequence $K\searrow^e K_1\searrow^e\cdots\searrow^e L$ we say that $K$ \emph{collapses} to $L$ (or equivalently, $L$ \emph{expands} to $K$) and write  $K\searrow L$  or $L\nearrow K$ respectively. A complex $K$ is said to be \emph{collapsible} if it has a subdivision which collapses to a single vertex. A celebrated theorem of J.H.C. Whitehead states that collapsible combinatorial $n$-manifolds are combinatorial $n$-balls \cite[Corollary III.17]{Gla}.

A more general type of collapse is the \emph{geometrical collapse}. If $K=K_0+B^n$, where $B^n$ is a combinatorial $n$-ball and $B^n\cap K_0=B^{n-1}$ is a combinatorial ($n-1$)-ball contained in the boundary of $B^n$, then the move $K\rightarrow K_0$ it called an \emph{elementary geometrical collapse}. A finite sequence of elementary  geometrical collapses (resp. expansions) is a a geometrical collapse (resp. expansion).

If $M$ is an $n$-manifold, an elementary geometrical expansion $M\rightarrow N=M+B^n$ such that $M\cap B^n\subset\bd{M}$ is called an \emph{elementary regular expansion}. By a Theorem of Alexander, an elementary regular expansion is a PL-equivalence (see \cite{Gla, Lic}). A sequence of elementary regular expansions (resp. collapses) is a regular expansion (resp. collapse). Note that the dimension of all the balls being expanded in such a sequence must be $n$.

If $M$ is a combinatorial $n$-manifold with boundary and there is an $n$-simplex $\eta=\sigma\ast \tau\in M$ with $\dim{\sigma},\dim{\tau}\geq 0$ such that $\sigma\in \overset{\circ}{M}$, the interior of $M$, and $\bd{\sigma}\ast \tau\subset\bd{M}$, then the move $M\stackrel{sh}\longrightarrow M_1=\overline{M-\sigma\ast \tau}$ is called an \emph{elementary shelling}. This operation produces again a combinatorial $n$-manifold. The inverse operation is called an \emph{inverse shelling}. Pachner showed in \cite{Pach} that two combinatorial $n$-manifolds with non-empty boundary are PL-homeomorphic if and only if one can  obtain one from the other by a sequence of elementary shellings, inverse shellings and isomorphisms.

\section{NH-manifolds}

A \emph{non-homogeneous manifold}, or \emph{$NH$-manifold} for short, is a simplicial complex which locally looks as in Figure \ref{fig:ejemplos_nh_variedades_locales}. We will define such complexes by induction on the dimension. We need first a definition.

\begin{defi} Let $K$ be a complex. A subcomplex $L\subseteq K$ is said to be \emph{top generated} in $K$ if it is generated by principal simplices of $K$, i.e. every maximal simplex of $L$ is also maximal in $K$.\end{defi}

\begin{figure}[h]
\centering
\includegraphics[width=6.00in,height=1.00in]{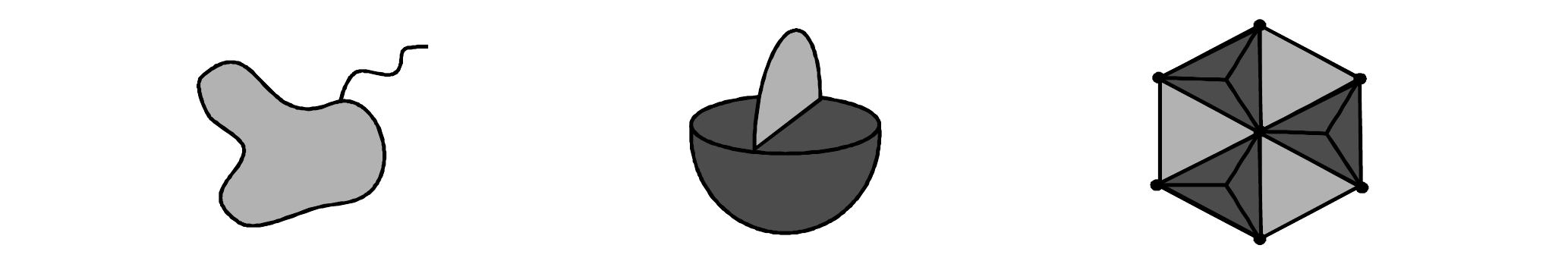}
\caption{Local structure of $NH$-manifolds.}
\label{fig:ejemplos_nh_variedades_locales}
\end{figure}

\begin{defi} An \emph{$NH$-manifold} (resp. \emph{$NH$-ball}, \emph{$NH$-sphere}) of dimension $0$ is a manifold (resp. ball, sphere) of dimension $0$. An $NH$-sphere of dimension $-1$ is, by convention, the empty set. For $n\geq 1$, we define by induction

 \begin{itemize}\item An $NH$-manifold of dimension $n$ is a complex $M$ of dimension $n$ such that $lk(v,M)$ is an $NH$-ball of dimension $0\leq k\leq n-1$ or an $NH$-sphere of dimension $-1\leq k\leq n-1$ for all $v\in V_M$.\item An $NH$-ball of dimension $n$ is a collapsible $NH$-manifold of dimension $n$.\item An $NH$-sphere of dimension $n$ and homotopy dimension $k$ is an $NH$-manifold $S$ of dimension $n$ such that there exist a top generated $NH$-ball $B$ of dimension $n$ and a top generated combinatorial $k$-ball $L$ such that $B + L=S$ and $B\cap L=\bd{L}$. We say that  $S=B+L$ is a \emph{decomposition} of $S$.\end{itemize}\end{defi}

Note that the definition of $NH$-ball is motivated by Whitehead's theorem on regular neighborhoods and the definition of $NH$-sphere by that of Newman's (see \cite{Gla} and \cite{Lic}).

\begin{obs} An $NH$-ball of dimension $1$ is the same as a $1$-ball.  An $NH$-sphere of dimension $1$ is either a $1$-sphere (if the homotopy dimension is $1$) or the disjoint union of a point and a combinatorial $1$-ball (if the homotopy dimension is $0$).  In general, an $NH$-sphere of homotopy dimension $0$ consists of a disjoint union of a point and an $NH$-ball. These are the only $NH$-spheres which are not connected.\end{obs}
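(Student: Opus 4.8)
The plan is to unwind the inductive definitions in dimensions $0$ and $1$, using the elementary fact that a $1$-dimensional $NH$-manifold is a graph of maximal degree $\le 2$.

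First I would record the local structure of a $1$-dimensional $NH$-manifold $M$: since $n=1$, the inductive definition only allows $lk(v,M)$ to be an $NH$-ball of dimension $0$ (a single vertex), an $NH$-sphere of dimension $0$ (two vertices), or the $(-1)$-sphere $\emptyset$, so every vertex of $M$ has degree $1$, $2$ or $0$. Hence $M$ is a disjoint union of isolated vertices, arcs and cycles, with at least one edge since $\dim M=1$. For the first assertion, an $NH$-ball of dimension $1$ is by definition a collapsible such $M$; a complex that collapses, after subdivision, to a point is contractible, so $M$ is a finite tree, and being of maximal degree $\le 2$ it is a single arc, i.e. a combinatorial $1$-ball. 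Conversely a combinatorial $1$-ball is a $1$-dimensional $NH$-manifold (its vertex links are a point or the $0$-sphere) and is collapsible, so the two notions coincide.

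For the second assertion, let $S=B+L$ be a decomposition of an $NH$-sphere of dimension $1$: $B$ is a top generated $NH$-ball of dimension $1$, hence a combinatorial $1$-ball by the first part, $L$ is a top generated combinatorial $k$-ball, and $B\cap L=\bd L$. Since $\dim S=1$ forces $k\in\{0,1\}$, there are two cases. If $k=1$, then $L$ is an arc meeting $B$ exactly in its two endpoints $\bd L$; as every vertex of the $1$-dimensional $NH$-manifold $S$ has degree $\le 2$, these two endpoints must be the two endpoints of the arc $B$, so $S$ is a cycle, i.e. a combinatorial $1$-sphere. (Conversely, any combinatorial $1$-sphere splits as such a $B+L$, taking $B$ a single edge and $L$ the complementary arc.) If $k=0$, then $L$ is a single vertex and $\bd L=\bd\Delta^0=\emptyset$, so $B\cap L=\emptyset$ and $S$ is the disjoint union of a combinatorial $1$-ball and a point; conversely, any such disjoint union is a $1$-dimensional $NH$-manifold carrying exactly this decomposition. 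The general statement is the same computation in arbitrary dimension: if $S=B+L$ has homotopy dimension $0$ then $L$ is a combinatorial $0$-ball, i.e. a point, so $\bd L=\emptyset$, $B\cap L=\emptyset$, and $S=B\sqcup\{v\}$ is the disjoint union of an $NH$-ball and a point (and any such union, with this decomposition, is an $NH$-sphere of homotopy dimension $0$).

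Finally, for the connectivity claim I would observe that if the homotopy dimension $k$ of $S=B+L$ is at least $1$, then $\bd L=\bd\Delta^k$ is a nonempty $(k-1)$-sphere, so $B$ and $L$ are connected subcomplexes meeting in a nonempty set and $S=B\cup L$ is connected; since no complex can simultaneously admit a homotopy-dimension-$0$ decomposition (which forces it to be disconnected) and one with $k\ge 1$ (which forces it to be connected), the disconnected $NH$-spheres are precisely those of homotopy dimension $0$, already described. I do not expect a genuine obstacle here — the statement is essentially bookkeeping — but the steps that deserve care are checking that the dimension bounds in the inductive definition really do pin down the admissible vertex links of a $1$-dimensional $NH$-manifold, and invoking collapsibility (hence contractibility) to exclude a cyclic component in the $NH$-ball case.
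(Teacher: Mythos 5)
The paper states this as an unproved remark, so there is no argument of the authors' to compare against; your unwinding of the inductive definitions is correct and complete, and is surely the verification the authors had in mind. In particular you correctly identify the two points that actually need checking: that the dimension bounds on vertex links make a $1$-dimensional $NH$-manifold a disjoint union of isolated vertices, arcs and cycles (with collapsibility then excluding cycles and isolated components in the $NH$-ball case), and that the convention $\bd\Delta^{0}=\emptyset$ forces $B\cap L=\emptyset$ in any decomposition of homotopy dimension $0$, which yields both the disjoint-union description and the connectivity dichotomy.
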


\begin{ejs}\label{examples of NH-manifolds} Figure \ref{fig:ejemplos_nh_variedades} shows some examples of $NH$-manifolds.

\begin{figure}[h]
\centering
\includegraphics[width=6.00in,height=1.00in]{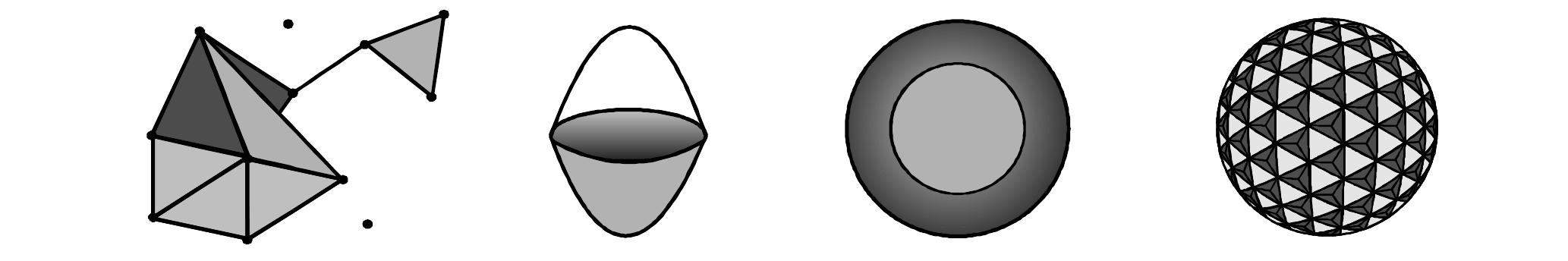}
\caption{$NH$-manifolds.}
\label{fig:ejemplos_nh_variedades}
\end{figure}

 \end{ejs}

\begin{obs} Note that the decomposition of an $NH$-sphere need not be unique. However the homotopy dimension of the $NH$-sphere is well defined since the geometric realization of an $NH$-sphere of homotopy dimension $k$ is a homotopy $k$-sphere. \end{obs}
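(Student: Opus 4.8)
The plan is to prove directly that if $S$ is an $NH$-sphere of homotopy dimension $k$, then its geometric realization $|S|$ is homotopy equivalent to the $k$-sphere $S^k$; the well-definedness of the homotopy dimension is then immediate, since spheres of distinct dimensions have distinct reduced homology. Fix a decomposition $S=B+L$, with $B$ a top generated $NH$-ball of dimension $n$, $L$ a top generated combinatorial $k$-ball, and $B\cap L=\bd L$.

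First I would record two facts. Since $B$ is an $NH$-ball it is, by definition, a collapsible $NH$-manifold; as collapses are homotopy equivalences and subdivision does not change the geometric realization, $|B|$ is contractible. Since $L$ is a combinatorial $k$-ball, $|L|$ is $PL$-homeomorphic to $|\Delta^k|$, and because the boundary of a $PL$-ball is a $PL$-invariant this homeomorphism restricts to one between $|\bd L|$ and $|\bd\Delta^k|$; hence the pair $(|L|,|\bd L|)$ is homeomorphic to $(D^k,S^{k-1})$, with the convention $S^{-1}=\emptyset$ when $k=0$. (Recall that $\bd L$ here is the manifold boundary of the combinatorial $k$-ball $L$, which is a combinatorial $(k-1)$-sphere.)

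Now I would pass to a quotient. The inclusion $B\hookrightarrow S$ is the inclusion of a subcomplex, hence a cofibration, and $|B|$ is contractible; therefore the quotient map $|S|\to|S|/|B|$ is a homotopy equivalence. Because $S=B+L$ and $|B|\cap|L|=|\bd L|$, collapsing $|B|$ in $|S|$ produces a space homeomorphic to $|L|/|\bd L|$: the composite $|L|\hookrightarrow|S|\to|S|/|B|$ is a continuous surjection that identifies exactly the points of $|\bd L|$, and it is a homeomorphism by compactness. By the second fact above, $|L|/|\bd L|\cong D^k/S^{k-1}\cong S^k$ (for $k=0$ this reads $D^0/\emptyset=S^0$). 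Hence $|S|\simeq S^k$.

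The only point requiring care is the identification $(|L|,|\bd L|)\cong(D^k,S^{k-1})$ of pairs, together with the observation that $B\cap L$ is precisely this boundary sphere; both come from the definition of an $NH$-sphere and standard $PL$-ball theory, and everything else is routine point-set homotopy theory. An alternative, equivalent argument avoids quotients altogether: since both subcomplex inclusions are cofibrations, $|S|$ is a model for the homotopy pushout of $|B|\leftarrow|\bd L|\to|L|$, and the homotopy pushout of two contractible spaces along a space homotopy equivalent to $S^{k-1}$ is the unreduced suspension $\Sigma S^{k-1}=S^k$; this also handles $k=0$ correctly, since $\Sigma\,\emptyset=S^0$.
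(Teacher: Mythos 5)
The paper states this remark without proof, and your argument is exactly the standard one it leaves implicit: $|B|$ is contractible because $B$ is collapsible, $(|L|,|\bd L|)\cong(D^k,S^{k-1})$ by PL-ball theory, and collapsing the contractible subcomplex $|B|$ (a cofibration) exhibits $|S|\simeq |L|/|\bd L|\cong S^k$, whence the homotopy dimension is determined by the homology of $|S|$. Your handling of the degenerate case $k=0$ via the convention $\bd\Delta^0=\emptyset$ (or via the homotopy pushout formulation) is also consistent with the paper's conventions, so the proposal is correct and complete.
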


We show now that the notion of $NH$-manifold is in fact an extension of the concept of combinatorial manifold to the non-homogeneous context.

\begin{teo}\label{thm homogeneous NH-manifolds} A complex $K$ is a homogeneous $NH$-manifold (resp. $NH$-ball, $NH$-sphere) of dimension $n$ if and only if it is a combinatorial $n$-manifold (resp. $n$-ball, $n$-sphere).\end{teo}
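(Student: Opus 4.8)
The plan is to establish the three equivalences simultaneously, by induction on $n$. The base case $n=0$ requires nothing: every complex of dimension $0$ is homogeneous, and by definition an $NH$-manifold (resp.\ $NH$-ball, $NH$-sphere) of dimension $0$ \emph{is} a combinatorial $0$-manifold (resp.\ $0$-ball, $0$-sphere). In the inductive step I would treat the three cases in the order manifold, then ball, then sphere, since inside a fixed dimension the definition of $NH$-ball refers to that of $NH$-manifold and the definition of $NH$-sphere refers to both.

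The engine of the manifold case is the observation that a homogeneous complex $K$ of dimension $n$ has $lk(v,K)$ homogeneous of dimension $n-1$ for every $v\in V_K$: every vertex lies in some $n$-simplex, so $\dim lk(v,K)\geq n-1$, and if $\sigma$ is a maximal simplex of $lk(v,K)$ then $v\ast\sigma$ is maximal in $K$ (were $v\ast\sigma<\tau$ with $\tau\in K$, then $v\in\tau$, so $\tau=v\ast\rho$ with $\rho\in lk(v,K)$ and $\rho>\sigma$, contradicting maximality), hence $\dim\sigma=n-1$. Granting this, if $K$ is a combinatorial $n$-manifold then $K$ is homogeneous of dimension $n$ and each $lk(v,K)$ is a combinatorial $(n-1)$-ball or $(n-1)$-sphere, hence by the inductive hypothesis a homogeneous $NH$-ball or $NH$-sphere of dimension $n-1$, so $K$ is an $NH$-manifold of dimension $n$. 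Conversely, if $K$ is a homogeneous $NH$-manifold of dimension $n$ then each $lk(v,K)$ is an $NH$-ball or $NH$-sphere of some dimension $j$; as such a complex is $j$-dimensional by definition, the homogeneity just noted forces $j=n-1$, and the inductive hypothesis makes $lk(v,K)$ a combinatorial $(n-1)$-ball or $(n-1)$-sphere, whence $K$ is a combinatorial $n$-manifold.

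The ball case is then immediate: a homogeneous $NH$-ball of dimension $n$ is a collapsible $NH$-manifold of dimension $n$, hence a collapsible combinatorial $n$-manifold by the manifold case, hence a combinatorial $n$-ball by Whitehead's theorem; conversely a combinatorial $n$-ball is a combinatorial $n$-manifold, hence a homogeneous $NH$-manifold of dimension $n$, and it is collapsible (classical, cf.\ \cite{Gla}), so it is a homogeneous $NH$-ball of dimension $n$. For the sphere case, fix a decomposition $S=B+L$ with $B$ a top generated $NH$-ball of dimension $n$, $L$ a top generated combinatorial $k$-ball, and $B\cap L=\bd L$. If $S$ is homogeneous then $B$ and $L$, being top generated in $S$, are homogeneous of dimension $n$, so $k=n$, i.e.\ $L$ is a combinatorial $n$-ball; by the ball case $B$ is a combinatorial $n$-ball, and by the manifold case $S$ is a combinatorial $n$-manifold. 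Since $\dim\bd L=n-1$, the complexes $B$ and $L$ share no $n$-simplex, so an $(n-1)$-simplex of $\bd L$ lies in exactly one $n$-simplex of $L$, in at least one of $B$, and in at most two of $S$; hence in exactly one of $B$, so it belongs to $\bd B$. As $\bd L$ is generated by its $(n-1)$-simplices, $\bd L\subseteq\bd B$, and since both are combinatorial $(n-1)$-spheres and $\bd L$ is a pseudo manifold, a standard propagation argument gives $\bd L=\bd B$; therefore $S=B\cup L$ is a union of two combinatorial $n$-balls meeting exactly along their common boundary sphere, so $S$ is a combinatorial $n$-sphere. Conversely, if $S$ is a combinatorial $n$-sphere, pick an $n$-simplex $\Delta\in S$ and set $L=\Delta$, $B=\overline{S-\Delta}$; then $L$ is a top generated combinatorial $n$-ball, $B$ is a combinatorial $n$-ball by Newman's theorem and is plainly top generated, and $B\cap L=\bd\Delta$ because $S$ is a pseudo manifold. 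Together with the manifold case, the decomposition $S=B+L$ exhibits $S$ as a homogeneous $NH$-sphere of dimension $n$ (and homotopy dimension $n$).

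I expect the delicate step to be the identity $\bd L=\bd B$ in the sphere case: this is what upgrades ``$S$ is a combinatorial $n$-manifold covered by two $n$-balls'' to ``$S$ is a combinatorial $n$-sphere'', and it uses the pseudo manifold structure of $S$, which is available only after the manifold case has been settled in dimension $n$. By contrast the manifold case is routine once the homogeneity-of-links remark is isolated, and the ball case is a one-line consequence of Whitehead's theorem.
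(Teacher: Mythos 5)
Your proof is correct and takes essentially the same route as the paper's: an interleaved induction on dimension, Whitehead's theorem for the ball case, the pseudo-manifold argument forcing $\bd{L}=\bd{B}$ in the sphere case, and Newman's theorem for the converse. You merely spell out two steps the paper leaves implicit (the homogeneity of links of a homogeneous complex, and the propagation argument giving $\bd{L}=\bd{B}$ from $\bd{L}\subseteq\bd{B}$), which is a harmless elaboration rather than a different approach.
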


\begin{proof} Let $n\geq 1$. It is easy to see that the result holds for $NH$-manifolds of dimension $n$ provided that it holds for $NH$-balls and $NH$-spheres of dimension less than $n$. Then it suffices to prove that the result holds for $NH$-balls and $NH$-spheres of dimension $n$ if it holds for $NH$-manifolds of dimension $n$.

For $NH$-balls the result is clear by the theorem of Whitehead \cite[Corollary III.17]{Gla}. Suppose now that $S=B+L$ is a homogeneous $NH$-sphere of dimension $n$. It follows that $B$ and $L$ are combinatorial $n$-balls. Take $\sigma\in\bd{L}$ a maximal simplex. Since $lk(\sigma,S)=\{v\}+ lk(\sigma,B)$ for some vertex $v\in L$ and $S$ is an $n$-pseudo manifold, then $lk(\sigma,B)$ is also a single vertex. It follows that $\sigma\in\bd{B}$. Since both $\bd{L}$ and $\bd{B}$ are combinatorial ($n-1$)-spheres, this implies that $\bd{L}=\bd{B}$. This proves that $S$ is a combinatorial $n$-sphere. Conversely, any $n$-simplex of a combinatorial $n$-sphere can play the role of $L$ in its decomposition as an $NH$-sphere. The result then follows from Newman's Theorem.\end{proof}

Following the same reasoning of \cite[Theorem II.2]{Gla} for combinatorial manifolds, one can show that the links of all simplices in an $NH$-manifold behave nicely. Concretely:

\begin{prop}\label{prop regularity of NH-manifolds} Let $M$ be an $NH$-manifold of dimension $n$ and let $\sigma\in M$ be a $k$-simplex. Then $lk(\sigma,M)$ is an $NH$-ball or an $NH$-sphere of dimension less than $n-k$.\end{prop}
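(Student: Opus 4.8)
The plan is to argue by induction on $n=\dim M$, following the classical argument for combinatorial manifolds \cite[Theorem II.2]{Gla}. The base case $n=0$ is immediate: every simplex of a $0$-manifold is a vertex $v$, and $lk(v,M)=\emptyset$ is an $NH$-sphere of dimension $-1<0=n-k$. For the inductive step, fix $n\geq 1$, assume the statement holds for every $NH$-manifold of dimension $<n$, and let $M$ be an $NH$-manifold of dimension $n$ with a $k$-simplex $\sigma$.

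If $k=0$, then $lk(\sigma,M)$ is an $NH$-ball or $NH$-sphere of dimension at most $n-1<n=n-k$ by the very definition of $NH$-manifold, so there is nothing to prove. If $k\geq 1$, write $\sigma=v\ast\tau$ with $v$ a vertex of $\sigma$ and $\tau$ the complementary $(k-1)$-face. Then $\tau\in lk(v,M)$, and checking the defining conditions of the link yields the standard identity $lk(\sigma,M)=lk\bigl(\tau,lk(v,M)\bigr)$. Set $N:=lk(v,M)$. By definition $N$ is an $NH$-ball or $NH$-sphere of some dimension $j\leq n-1$, and since $NH$-balls and $NH$-spheres of dimension $j$ are $NH$-manifolds of dimension $j$ (directly from the definitions), $N$ is an $NH$-manifold of dimension $j<n$, to which the inductive hypothesis applies.

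Applying the inductive hypothesis to $N$ and its $(k-1)$-simplex $\tau$ gives that $lk(\tau,N)$ is an $NH$-ball or an $NH$-sphere of dimension strictly less than $j-(k-1)$. Since $j\leq n-1$ we have $j-(k-1)\leq n-k$, so $lk(\sigma,M)=lk(\tau,N)$ is an $NH$-ball or $NH$-sphere of dimension less than $n-k$, as wanted. The potentially degenerate cases need no separate treatment: if $N=\emptyset$ then $v$ is maximal in $M$, forcing $k=0$, so this does not occur when $k\geq 1$; and if $\tau$ is maximal in $N$ then $lk(\tau,N)=\emptyset$, an $NH$-sphere of dimension $-1<n-k$, consistent with the bound. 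I do not foresee a real obstacle here: the whole content is the reduction $lk(\sigma,M)=lk(\tau,lk(v,M))$ combined with the remark that $NH$-balls and $NH$-spheres are $NH$-manifolds of the same dimension, so that induction can be run on the strictly lower-dimensional complex $lk(v,M)$; the only care required is the dimension bookkeeping ensuring the strict inequality $\dim lk(\tau,N)<j-(k-1)$ propagates to $\dim lk(\sigma,M)<n-k$.
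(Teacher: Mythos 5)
Your proof is correct and is precisely the argument the paper has in mind: the paper omits the proof, stating only that one follows ``the same reasoning of \cite[Theorem II.2]{Gla}'', which is exactly your induction on $\dim M$ via the identity $lk(v\ast\tau,M)=lk(\tau,lk(v,M))$ together with the observation that $NH$-balls and $NH$-spheres are $NH$-manifolds of strictly smaller dimension. The dimension bookkeeping and the degenerate cases are handled correctly.
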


The property stated in the preceding proposition is often called \emph{regularity}. 

In order to show that the class of $NH$-manifolds is PL-closed, we will need the following lemma, which is somehow an analogue of \cite[Proposition II.1]{Gla}. This result will be generalized in Corollary \ref{join of balls and spheres with q-balls and q-spheres} and in Theorem \ref{thm join of q-balls and q-spheres}.

\begin{lema}\label{lemma simplex join q-ball q-sphere} Let $K$ be an $NH$-ball or an $NH$-sphere and let $\sigma$ be a simplex disjoint from $K$. Then, \begin{enumerate}\item $\sigma\ast K$ is an $NH$-ball.\item $\bd{\sigma}\ast K$ is an $NH$-ball (if $K$ is an $NH$-ball) or an $NH$-sphere (if $K$ is an $NH$-sphere).\end{enumerate}\end{lema}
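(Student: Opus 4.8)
The plan is to prove both statements simultaneously by induction on $\dim K$. For the base case, $K$ is an $NH$-ball or $NH$-sphere of dimension $0$ (or $K=\emptyset$, the $(-1)$-sphere): then $\sigma\ast K$ and $\bd\sigma\ast K$ are low-dimensional complexes whose ball/sphere status can be checked directly — e.g. $\sigma\ast\emptyset=\sigma$ is a ball, $\bd\sigma\ast\emptyset=\bd\sigma$ is a sphere, and joining with a $0$-sphere or $0$-ball likewise yields recognizable balls and spheres. (In fact one can just invoke Proposition~\ref{prop regularity of NH-manifolds} at this level, since $0$-dimensional $NH$-objects are genuine combinatorial ones.)

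For the inductive step, assume the lemma for $NH$-balls and $NH$-spheres of dimension $<\dim K$. The first task is to verify that $\sigma\ast K$ and $\bd\sigma\ast K$ are $NH$-manifolds of the expected dimension, which means checking the link condition at every vertex. Here I would use the standard join-link formulas: for $v\in\sigma$, $lk(v,\sigma\ast K)=(\sigma\smallsetminus v)\ast K$, which is a join of a (possibly empty) simplex with $K$, hence an $NH$-ball by the statement being proved in that dimension (note $\dim((\sigma\smallsetminus v)\ast K)<\dim(\sigma\ast K)$, but actually we want a same-dimension statement — so instead one inducts on $\dim\sigma$ as well, peeling off one vertex of $\sigma$ at a time, so that $K$ is fixed and only $\sigma$ shrinks); for $v\in K$, $lk(v,\sigma\ast K)=\sigma\ast lk(v,K)$, and since $lk(v,K)$ is an $NH$-ball or $NH$-sphere of strictly smaller dimension, the inductive hypothesis applies directly. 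The case $\bd\sigma\ast K$ is analogous, with $lk(v,\bd\sigma\ast K)=(\bd\sigma\smallsetminus v)\ast K = (\text{a simplex})\ast K$ for $v\in\bd\sigma$, again a join handled by a smaller instance.

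Once the link condition is established, it remains to promote "$NH$-manifold" to "$NH$-ball" or "$NH$-sphere". For part (1): $\sigma\ast K$ is a cone, hence collapsible — more carefully, if $K'\searrow\ast$ after some subdivision (when $K$ is an $NH$-ball) one extends this to a collapse of $\sigma\ast K$; and even when $K$ is an $NH$-sphere, the cone $\sigma\ast K$ collapses to $\sigma$ and thence to a point, using that a cone on anything collapses to its apex. So $\sigma\ast K$ is a collapsible $NH$-manifold, i.e. an $NH$-ball. For part (2) when $K$ is an $NH$-ball: write $\sigma\ast K = (\bd\sigma\ast K) + (\overset{\circ}\sigma\ast K)$, or more cleanly observe $\bd\sigma\ast K$ collapses to a face; one shows $\bd\sigma\ast K$ is collapsible (pick a vertex $v\in\sigma$; then $\bd\sigma = v\ast\bd(\sigma\smallsetminus v)\;\cup\;(\sigma\smallsetminus v)$ collapses appropriately, and joining with $K$ preserves the collapse). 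For part (2) when $K$ is an $NH$-sphere: exhibit a decomposition $\bd\sigma\ast K = B + L$ with $B=\bd\sigma\ast B_K$ (an $NH$-ball by part (2), ball case) and $L = \bd\sigma\ast L_K$ where $K=B_K+L_K$ is a decomposition; one checks $L=\bd\sigma\ast L_K$ is a combinatorial ball (join of a sphere with a ball), that $B+L=\bd\sigma\ast K$, that $B\cap L = \bd\sigma\ast(B_K\cap L_K) = \bd\sigma\ast\bd L_K = \bd(\bd\sigma\ast L_K) = \bd L$, and that both pieces are top generated. This gives $\bd\sigma\ast K$ the structure of an $NH$-sphere (of homotopy dimension $\dim\sigma + \dim L_K + 1$, say).

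The main obstacle I anticipate is bookkeeping the decomposition in part (2), sphere case: one must verify that $B=\bd\sigma\ast B_K$ is genuinely an $NH$-\emph{ball} (not merely an $NH$-manifold) — which is exactly part (2) of the lemma in the ball case, so the two parts must be interleaved in the induction — and that the intersection and top-generation conditions hold, for which the identity $\bd\sigma\ast\bd L_K = \bd(\bd\sigma\ast L_K)$ and the behaviour of maximal simplices under joins need to be checked with some care. The collapsibility arguments for $\sigma\ast K$ and $\bd\sigma\ast K$ are routine (cones collapse, and collapses are preserved under taking joins with a fixed complex), but one should be careful that "$NH$-ball" requires collapsibility \emph{after subdivision}, so the subdivision of $K$ witnessing its collapsibility has to be joined with $\sigma$ and the collapse extended — this is standard but worth stating.
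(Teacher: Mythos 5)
Your proof follows essentially the same route as the paper's: a double induction on $\dim K$ and $\dim\sigma$, the join--link formulas to verify the $NH$-manifold condition at each vertex, collapsibility of cones for part (1), and the decomposition $\partial\sigma\ast K=(\partial\sigma\ast B_K)+(\partial\sigma\ast L_K)$ with $(\partial\sigma\ast B_K)\cap(\partial\sigma\ast L_K)=\partial\sigma\ast\partial L_K=\partial(\partial\sigma\ast L_K)$ for the sphere case of part (2). One small correction: for $v\in\partial\sigma$ the link is $lk(v,\partial\sigma)\ast K=\partial\bigl(lk(v,\sigma)\bigr)\ast K$, the join of $K$ with the \emph{boundary} of a smaller simplex rather than with a simplex, so the smaller instance invoked there must be part (2) and not part (1) (and the homotopy dimension in the sphere case is $\dim\sigma+\dim L_K$, not $\dim\sigma+\dim L_K+1$); neither slip affects the validity of the argument.
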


\begin{proof} For the first part of the lemma, we proceed by double induction.  Suppose first that $\dim{\sigma}=0$, i.e. $\sigma$ is a vertex $v$, and that the result holds for $NH$-balls and $NH$-spheres $K$ of dimension less than $n$. Note that $v\ast K\searrow 0$, so we only have to verify that $v\ast K$ is an $NH$-manifold.
Take $w\in V_K$. Since $lk(w,v\ast K)=v\ast lk(w,K)$, by induction applied to $lk(w,K)$, it follows that $lk(w,v\ast K)$ is an $NH$-ball. On the other hand, $lk(v,v\ast K)=K$, which is an $NH$-ball or an $NH$-sphere by hypothesis. This shows that $v\ast K$ is an $NH$-manifold and proves the case $\dim{\sigma}=0$. Suppose now that $\dim{\sigma}=k\geq 1$. Write $\sigma=\tau\ast v$ for some $v\in\sigma$. Since $\sigma\ast K=\tau\ast(v\ast K)$, the results follows by induction applied to $v$ and $\tau$. 

For the second part of the lemma, suppose that $\dim{\sigma}=k\geq 1$ and let $K$ be an $NH$-ball or an $NH$-sphere of dimension $n$. It is easy to see that the result is valid if $n=0$. Suppose now that $n\geq 1$ and that the result holds for $t<n$. For any vertex $v\in\bd{\sigma}\ast K$, we have
\begin{equation*} lk(v,\bd{\sigma}\ast K)=\left\{\begin{array}{ll}
  \bd{\sigma}\ast lk(v,K) & v\notin\bd{\sigma} \\
  lk(v,\bd{\sigma})\ast K & v\in\bd{\sigma} \\
\end{array}
\right.\end{equation*} 
In the first case,  by induction on $n$, it follows that $lk(v,\bd{\sigma}\ast K)$ is an $NH$-ball or sphere. In the second case, we use induction on $k$ (note that $lk(v,\bd{\sigma})=\bd{lk(v,\sigma)}$). This proves that $\bd{\sigma}\ast K$ is an $NH$-manifold. Now, if $K$ is an $NH$-ball then $\bd{\sigma}\ast K\searrow 0$ and $\bd{\sigma}\ast K$ is again an $NH$-ball. If $K$ is an $NH$-sphere write $K=B+L$ with $B$ an $NH$-ball, $L$ a combinatorial ball and $B\cap L=\partial L$. Since $\bd(\bd{\sigma}\ast L)=\bd{\sigma}\ast\bd{L}=\bd{\sigma}\ast B\cap\bd{\sigma}\ast L$, then $\bd{\sigma}\ast K=\bd{\sigma}\ast B+\bd{\sigma}\ast L$ is an $NH$-sphere by the previous case. This concludes the proof.\end{proof}

In particular, from Lemma \ref{lemma simplex join q-ball q-sphere} we deduce that $M$ is an $NH$-manifold if and only if $st(v,M)$ is an $NH$-ball for all $v\in V_M$.


\begin{teo}\label{thm pl-invariancy of NH-manifolds} The classes of $NH$-manifolds, $NH$-balls and $NH$-spheres are PL-closed.\end{teo}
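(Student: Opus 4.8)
The plan is to mimic the classical proof that combinatorial manifolds form a PL-closed class (cf.\ \cite{Gla}), the extra inputs being Lemma \ref{lemma simplex join q-ball q-sphere} and a simultaneous induction over the three classes. By the equivalence of the combinatorial and stellar theories, a class of complexes is PL-closed precisely when it is closed under simplicial isomorphisms, elementary starrings $(\sigma,a)$, and elementary welds $(\sigma,a)^{-1}$; since welds are inverse to starrings, it suffices to show, for each class $\mathcal C$, that $K\in\mathcal C$ implies $(\sigma,a)K\in\mathcal C$ and, conversely, that $(\sigma,a)K\in\mathcal C$ implies $K\in\mathcal C$. I would argue by induction on $n=\dim K$, establishing dimension $n$ first for $NH$-manifolds, then for $NH$-balls, then for $NH$-spheres, assuming all three classes are already known to be PL-closed in every dimension $<n$; the base case $n=0$ is classical, and one may always take $\dim\sigma\geq 1$ since starring a vertex is just a relabeling.

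For $NH$-manifolds the heart of the matter is the behavior of vertex links under a starring $N=(\sigma,a)M$. A direct computation should give: $lk(v,N)=lk(v,M)$ when $v$ lies in neither $\sigma$ nor $lk(\sigma,M)$; $lk(a,N)=\bd{\sigma}\ast lk(\sigma,M)$ for the starring vertex $a$; $lk(v,N)=(\sigma,a)\,lk(v,M)$ for $v\in V_{lk(\sigma,M)}$; and $lk(v,N)=(\tau,a)\,lk(v,M)$ for a vertex $v$ of $\sigma$ with opposite face $\tau$ (here one uses $lk(\tau,lk(v,M))=lk(\sigma,M)$, so $st(\tau,lk(v,M))=\tau\ast lk(\sigma,M)$). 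Since $lk(\sigma,M)$ is an $NH$-ball or $NH$-sphere of dimension $<n-\dim\sigma$ by Proposition \ref{prop regularity of NH-manifolds}, Lemma \ref{lemma simplex join q-ball q-sphere}(2) makes $lk(a,N)$ an $NH$-ball or $NH$-sphere of dimension $<n$; in the two remaining cases the new link is an elementary starring of an $NH$-ball or $NH$-sphere of dimension $<n$ and is therefore, by the inductive hypothesis, again an $NH$-ball or $NH$-sphere of the same dimension. As PL-dimension is an invariant, $\dim N=n$, so $N$ is an $NH$-manifold of dimension $n$; reading the four identities backwards, each link of $M$ is an elementary weld of a link of $N$, which handles the converse via the inductive hypothesis.

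For $NH$-balls of dimension $n$: if $B\sim B'$ with $B$ an $NH$-ball, then $B'$ is an $NH$-manifold of dimension $n$ by the previous step, and it is collapsible because collapsibility — in the sense of admitting a subdivision that collapses to a point — is a PL-invariant property (standard PL topology; cf.\ \cite{Gla}); hence $B'$ is an $NH$-ball. For $NH$-spheres it suffices to treat one starring $N=(\sigma,a)S$ with $S=B+L$ a decomposition ($B$ a top generated $NH$-ball, $L$ a top generated combinatorial $k$-ball, $B\cap L=\bd L$). Using that $L$ is top generated — so that any simplex of $S$ with a face in $L\setminus\bd L$ lies in $L$ — I would split into the cases $\sigma\in B\setminus L$, $\sigma$ interior to $L$, and $\sigma\in\bd L$, and check in each that $st(\sigma,S)$ splits compatibly with $S=B+L$, whence $N=(\sigma,a)B+(\sigma,a)L$ (the starring acting trivially on a piece it does not meet). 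Then $(\sigma,a)B$ is an $NH$-ball of dimension $n$ by the step just done, $(\sigma,a)L$ a combinatorial $k$-ball by classical invariance, the top generated property persists, $(\sigma,a)B\cap(\sigma,a)L=(\sigma,a)(\bd L)=\bd((\sigma,a)L)$ since subdivision commutes with passing to the boundary, and the homotopy dimension is unchanged because the homotopy type is PL-invariant; so $N$ is an $NH$-sphere of dimension $n$ and homotopy dimension $k$, and welds are symmetric.

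The main obstacle will be the link bookkeeping in the $NH$-manifold step — especially the identity $lk(v,(\sigma,a)M)=(\tau,a)\,lk(v,M)$ for $v\in\sigma$, which is what reduces the manifold condition at $v$ to an elementary starring in one lower dimension — together with the case analysis in the $NH$-sphere step showing a decomposition $S=B+L$ survives a starring with both pieces still top generated. Everything else should be routine given Lemma \ref{lemma simplex join q-ball q-sphere} and the inductive hypothesis.
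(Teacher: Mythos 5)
Your proposal is correct and follows essentially the same double induction as the paper: links of vertices under a starring are either unchanged, equal to $\bd{\sigma}\ast lk(\sigma,K)$ (handled by Lemma \ref{lemma simplex join q-ball q-sphere}), or starrings of lower-dimensional links; collapsibility is transported for $NH$-balls; and the decomposition $B+L$ of an $NH$-sphere is tracked through the three positions of the starred simplex. The only place you are terser than the paper is the PL-invariance of collapsibility, which the paper justifies explicitly via \cite[Theorem I.2]{Gla} together with the fact that stellar subdivisions preserve collapses --- exactly the ``standard PL topology'' fact you invoke.
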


\begin{proof} It suffices to prove that $K$ is an $NH$-manifold (resp. $NH$-ball, $NH$-sphere) if and only if any starring $(\tau,a)K$ is an $NH$-manifold (resp. $NH$-ball, $NH$-sphere). We suppose first that the result is valid for $NH$-manifolds of dimension $n$ and prove that it is valid for $NH$-balls and $NH$-spheres of the same dimension. If $(\tau,a)K$ is an $NH$-ball of dimension $n$ then $K$ is also an $NH$-ball since it is an $NH$-manifold with $\alpha((\tau,a)K)\searrow 0$ for some subdivision $\alpha$. On the other hand, if $K$ is an $NH$-manifold of dimension $n$ with  $\alpha K\searrow 0$, by \cite[Theorem I.2]{Gla} we can find a stellar subdivision $\delta$ and an arbitrary subdivision $\beta$ such that $\beta((\tau,a)K)=\delta(\alpha K)$. Since stellar subdivisions preserve collapses, $(\tau,a)K$ is collapsible and hence an $NH$-ball. Now, if $K$ is an $NH$-sphere of dimension $n$ with decomposition $B+L$ then the result holds by the previous case and the following identities. \begin{equation*}(\tau,a)K=\left\{\begin{array}{ll}
  (\tau,a)B+L \mbox{, with $(\tau,a)B\cap L=\bd{L}$ }  & a\in B-L \\
  & \\
  B+(\tau,a)L \mbox{, with $B\cap(\tau,a)L=\bd{L}$ }  & a\in L-B \\
  & \\
  (\tau,a)B+(\tau,a)L \mbox{, with $(\tau,a)B\cap(\tau,a)L=(\tau,a)\partial L$ }  & a\in B\cap L=\bd{L}\\
\end{array}\right.\end{equation*}
Note that $(\tau,a)\bd{L}=\bd(\tau,a)L$. The converse follows by replacing $(\tau,a)$ with $(\tau,a)^{-1}$.

We assume now that the result is valid for $NH$-balls and $NH$-spheres of dimension $n$ and prove that it is valid for $NH$-manifolds of dimension $n+1$. Suppose $K$ is an $NH$-manifold of dimension $n+1$ and take $v\in(\tau,a)K$. If $v\neq a$ then $lk(v,(\tau,a)K)$ is PL-homeomorphic to an elementary starring of $lk(v,K)$ . The inductive hypothesis on $lk(v,K)$ shows  that $lk(v,(\tau,a)K)$ is also an $NH$-ball or $NH$-sphere. On the other hand, $lk(a,(\tau,a)K)=\bd{\tau}\ast lk(\tau,K)$, which is an $NH$-ball or an $NH$-sphere by Lemma \ref{lemma simplex join q-ball q-sphere}. Once again, the converse follows by replacing  $(\tau,a)$ with $(\tau,a)^{-1}$.\end{proof}

\begin{coro}\label{join of balls and spheres with q-balls and q-spheres} Let $B$ be a combinatorial $n$-ball, $S$ a combinatorial $n$-sphere and $K$ an $NH$-ball or $NH$-sphere. Then, \begin{enumerate}\item $B\ast K$ is an $NH$-ball.\item $S\ast K$ is an $NH$-ball (if $K$ is an $NH$-ball) or an $NH$-sphere (if $K$ is an $NH$-sphere).\end{enumerate} \end{coro}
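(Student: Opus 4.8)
The plan is to reduce both statements to the case already settled in Lemma~\ref{lemma simplex join q-ball q-sphere}, using the fact recalled in the Preliminaries that PL-homeomorphism coincides with stellar equivalence, together with the PL-invariance of the three classes proved in Theorem~\ref{thm pl-invariancy of NH-manifolds}. Since $B$ is a combinatorial $n$-ball, $B\sim\Delta^n$, and since $S$ is a combinatorial $n$-sphere, $S\sim\bd{\Delta^{n+1}}$. Writing $\sigma$ for the relevant simplex in each case, I want to conclude that $B\ast K\sim\sigma\ast K$ (with $\sigma$ an $n$-simplex) and $S\ast K\sim\bd{\sigma}\ast K$ (with $\sigma$ an $(n+1)$-simplex). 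Granting this, part (1) follows from Lemma~\ref{lemma simplex join q-ball q-sphere}(1), which gives that $\sigma\ast K$ is an $NH$-ball, plus the fact that the class of $NH$-balls is PL-closed; and part (2) follows from Lemma~\ref{lemma simplex join q-ball q-sphere}(2) together with the PL-closedness of the classes of $NH$-balls and $NH$-spheres.

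The only real content is the following compatibility of the join with stellar moves: if two complexes $K_1$ and $K_2$, each disjoint from a fixed complex $L$, are stellar equivalent, then $K_1\ast L$ and $K_2\ast L$ are stellar equivalent. I would prove this by checking that each of the three elementary moves relating $K_1$ to $K_2$ lifts to a corresponding move on the join. A simplicial isomorphism $K_1\to K_2$ extends by the identity on $L$ to an isomorphism $K_1\ast L\to K_2\ast L$. For an elementary starring $(\tau,a)K_1$ with $\tau\in K_1$ and $a\in\overset{\circ}{\tau}$, one has $lk(\tau,K_1\ast L)=lk(\tau,K_1)\ast L$, hence $st(\tau,K_1\ast L)=st(\tau,K_1)\ast L$, and replacing this star by $a\ast\bd{\tau}\ast lk(\tau,K_1\ast L)=\bigl(a\ast\bd{\tau}\ast lk(\tau,K_1)\bigr)\ast L$ shows that $(\tau,a)(K_1\ast L)=\bigl((\tau,a)K_1\bigr)\ast L$; the point $a$ is a legitimate new vertex for the join as well, since it does not lie in $V_{K_1\ast L}$. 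The same identity read backwards handles an elementary weld. Composing these lifts along a sequence of moves witnessing $K_1\sim K_2$ gives $K_1\ast L\sim K_2\ast L$.

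With this in hand, applying the compatibility with $L=K$, which is disjoint from $B$ and from $S$ by the standing convention on vertex sets, to $B\sim\Delta^n$ and to $S\sim\bd{\Delta^{n+1}}$ yields $B\ast K\sim\Delta^n\ast K$ and $S\ast K\sim\bd{\Delta^{n+1}}\ast K$, and the corollary follows as indicated above; the degenerate cases, for instance $K=\emptyset$ (the $(-1)$-sphere) or $n=0$, are already covered by Lemma~\ref{lemma simplex join q-ball q-sphere}, noting that $\dim\sigma=n+1\geq 1$ in part (2). I expect the main---though still routine---obstacle to be precisely the bookkeeping in this join--stellar compatibility lemma: one must be careful that starring a face of $K_1$ inside $K_1\ast L$ never interacts with the simplices coming from $L$, which is exactly the content of the star and link identities above. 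Everything else is a direct appeal to the previously established Lemma and Theorem.
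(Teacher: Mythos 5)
Your proposal is correct and follows exactly the route the paper intends: the paper's own proof is the one-line ``Follows from Lemma \ref{lemma simplex join q-ball q-sphere} and Theorem \ref{thm pl-invariancy of NH-manifolds}'', i.e.\ replace $B$ by $\Delta^n$ and $S$ by $\bd{\Delta}^{n+1}$ up to PL-homeomorphism and invoke PL-closedness. Your extra care in verifying that stellar moves on a factor lift to the join (via $lk(\tau,K_1\ast L)=lk(\tau,K_1)\ast L$) is a correct filling-in of the detail the paper leaves implicit.
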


\begin{proof} Follows from Lemma \ref{lemma simplex join q-ball q-sphere} and  Theorem \ref{thm pl-invariancy of NH-manifolds}. \end{proof}

\begin{prop}\label{prop of pseudofrontier} Let $K$ be an $n$-dimensional complex and let $B$ be a combinatorial $r$-ball. Suppose $K+B$ is an $NH$-manifold such that
\begin{enumerate}
\item $K\cap B\subset\bd{B}$ is homogeneous of dimension $r-1$ and
\item $lk(\sigma,K)$ is collapsible for all $\sigma \in K\cap B$
\end{enumerate}
Then, $K$ is an $NH$-manifold.\end{prop}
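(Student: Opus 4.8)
The plan is to argue by induction on $\dim(K+B)$, writing $M:=K+B$. For the inductive step I would fix a vertex $v\in V_K$ and show that $lk(v,K)$ is an $NH$-ball or an $NH$-sphere; doing this for every $v$ gives that $K$ is an $NH$-manifold. There are two cases. If $v\notin V_B$, then every simplex of $M$ containing $v$ lies in $K$, so $lk(v,K)=lk(v,M)$, which is an $NH$-ball or an $NH$-sphere because $M$ is an $NH$-manifold. If $v\in V_B$, then $\{v\}\in K\cap B$ (hence $v\in\bd B$), and hypothesis (2) applied to the $0$-simplex $\sigma=\{v\}$ tells us that $lk(v,K)$ is collapsible; thus in this case it suffices to show that $lk(v,K)$ is an $NH$-manifold, for then, being collapsible, it is an $NH$-ball. (When $\dim M=0$ only the first case can occur and $lk(v,K)=\emptyset$ is a $(-1)$-sphere, so the induction bottoms out here.)

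To handle the second case I would apply the proposition inductively to the triple $\bigl(\,lk(v,M),\ lk(v,K),\ lk(v,B)\,\bigr)$. The verifications I expect to need are: (a) $lk(v,M)=lk(v,K)+lk(v,B)$, and by regularity (Proposition \ref{prop regularity of NH-manifolds}) $lk(v,M)$ is an $NH$-ball or $NH$-sphere, in particular an $NH$-manifold; (b) since $v\in\bd B$, the complex $lk(v,B)$ is a combinatorial $(r-1)$-ball; (c) $lk(v,K)\cap lk(v,B)=lk(v,K\cap B)$ is contained in $lk(v,\bd B)=\bd\bigl(lk(v,B)\bigr)$ and is homogeneous of dimension $r-2$, being the link of a vertex of the homogeneous complex $K\cap B$ of dimension $r-1$ — this is condition (1) for the triple; (d) for each $\rho\in lk(v,K\cap B)$ one has $lk\bigl(\rho,lk(v,K)\bigr)=lk(v\ast\rho,K)$, which is collapsible by hypothesis (2) since $v\ast\rho\in K\cap B$ — this is condition (2) for the triple. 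As $\dim lk(v,M)<\dim M$, the inductive hypothesis gives that $lk(v,K)$ is an $NH$-manifold, hence an $NH$-ball, which finishes this case.

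The step that needs the most care is (c): one must invoke the standard $PL$ facts that the link of a vertex $v$ in the boundary of a combinatorial ball $B$ is a combinatorial ball of dimension $\dim B-1$ satisfying $\bd\bigl(lk(v,B)\bigr)=lk(v,\bd B)$, and that the link of a vertex of a homogeneous complex is again homogeneous of one lower dimension; together these make the sub‑instance satisfy condition (1). The conceptual point that makes the induction close — and the thing I would notice first — is that a priori the inductive conclusion "$NH$-manifold" looks too weak (being an $NH$-manifold requires vertex links to be $NH$-balls or $NH$-spheres, not merely $NH$-manifolds), but hypothesis (2), read at the $0$-simplices $\{v\}\subseteq K\cap B$, already hands us the collapsibility of $lk(v,K)$; so "$NH$-manifold" is exactly enough to upgrade those links to $NH$-balls, and no strengthening of the statement is necessary.
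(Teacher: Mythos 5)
Your argument is correct and is essentially the paper's own proof: both reduce to the pair $\bigl(lk(v,K),lk(v,B)\bigr)$ inside $lk(v,K+B)$, verify hypotheses (1) and (2) for that pair via $lk(v,K)\cap lk(v,B)=lk(v,K\cap B)$ and $lk(\eta,lk(v,K))=lk(v\ast\eta,K)$, and use hypothesis (2) at the vertex itself to upgrade ``$NH$-manifold'' to ``$NH$-ball.'' The only (immaterial) difference is that you induct on $\dim(K+B)$ where the paper inducts on $r=\dim B$, handling $r\leq 1$ separately.
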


\begin{proof} We show first that $K,B\subset K+B$ are top generated. Clearly, $B$ is top generated since it intersects $K$ in dimension $r-1$. On the other hand, a principal simplex in $K$ which is not principal in $K+B$ must lie in $K\cap B$. Then, by hypothesis, it has a collapsible link in $K$. But this contradicts the fact that it is principal in $K$. Therefore $K,B\subset K+B$ are top generated and, in particular, $r\leq n$. 

We prove the result by induction on $r$. For $r=0$ the result is trivial. Let $r\geq 1$ and $v\in K$. If $v\notin B$ then $lk(v,K)=lk(v,K+B)$, which is an $NH$-ball or $NH$-sphere by hypothesis. Suppose now that $v\in K\cap B$. If $r=1$, then $lk(v,K+B)=lk(v,K)+\ast$. It follows that  $lk(v,K)$ is an $NH$-ball. Suppose $r\geq 2$ (and hence $n\geq 2$). We will see that the pair $lk(v,K),lk(v,B)$ also satisfies the conditions of the theorem. Note that $lk(v,K)+lk(v,B)=lk(v,K+B)$ is an $NH$-manifold by hypothesis and 
$lk(v,K)\cap lk(v,B)=lk(v,K\cap B)$ is homogeneous of dimension $r-2$. On the other hand, if $\eta\in lk(v,K)\cap lk(v,B)$ then, $v\ast\eta\in K\cap B$, so $lk(\eta,lk(v,K))=lk(v\ast\eta,K)$ is collapsible. By induction, it follows that  $lk(v,K)$ is an $NH$-manifold, and, since it is also collapsible, it is an $NH$-ball. This shows that $K$ is an $NH$-manifold.\end{proof}

\begin{lema}\label{lemma previous to joins} Suppose $S_1=G_1+L_1$ and $S_2=G_2+L_2$ are two disjoint $NH$-spheres. Then, $G_1\ast S_2+L_1\ast G_2$ is collapsible.\end{lema}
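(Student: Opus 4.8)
The plan is to prove that a suitable subdivision of $W:=G_1\ast S_2+L_1\ast G_2$ collapses onto $G_1\ast S_2$, and then to note that $G_1\ast S_2$ is itself collapsible. The latter is easy: since $G_1$ is an $NH$-ball it is collapsible, so a subdivision of $G_1$ collapses to a vertex $v$; joining this collapse with $S_2$ shows that the corresponding subdivision of $G_1\ast S_2$ collapses onto the cone $v\ast S_2$, which is collapsible. Hence the whole problem reduces to showing $W\searrow G_1\ast S_2$.

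For this I would use the standard gluing principle for collapses: \emph{if $A\searrow B$ and $A\cap C\subseteq B$, then $A\cup C\searrow B\cup C$} (the free faces occurring in a collapse $A\searrow B$ all lie in $A\setminus B$, hence are disjoint from $C$, so they remain free in $A\cup C$). Apply this with $A=L_1\ast G_2$, $B=\partial L_1\ast G_2$ and $C=G_1\ast S_2$. A routine computation with the join structure gives $(G_1\ast S_2)\cap(L_1\ast G_2)=\partial L_1\ast G_2$, so $A\cap C=B$, and since $\partial L_1\ast G_2\subseteq G_1\ast S_2$ we get $B\cup C=G_1\ast S_2$ and $A\cup C=W$. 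Thus $W\searrow G_1\ast S_2$ will follow from the statement $(\star)$: \emph{if $L$ is a combinatorial ball and $G$ is a collapsible complex disjoint from $L$, then $L\ast G\searrow\partial L\ast G$} (up to subdivision).

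I expect $(\star)$ to be the main obstacle. Subdivide so that $G$ collapses to a vertex $w$. The argument then splits into two stages. First, follow the collapse $G\searrow w$ step by step and, at each elementary step $(\sigma,\tau)$ of it, remove the pairs $(\rho\ast\sigma,\rho\ast\tau)$ as $\rho$ ranges over the interior simplices of $L$ (i.e. those not in $\partial L$), taken in order of decreasing dimension; the point that makes each of these an elementary collapse is that an interior simplex of $L$ has no coface in the subcomplex $\partial L$, so all cofaces that could interfere have already been removed. This yields $L\ast G\searrow(\partial L\ast G)\cup(L\ast w)$. Second, one checks directly that $L\ast w\searrow\partial L\ast w$ by removing the pairs $(\rho\ast w,\rho)$, $\rho$ an interior simplex of $L$, again in decreasing dimension; since $\partial L\ast w=(L\ast w)\cap(\partial L\ast G)$, the gluing principle upgrades this to $(\partial L\ast G)\cup(L\ast w)\searrow\partial L\ast G$, and composing the two stages gives $(\star)$.

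The degenerate case $\dim L=0$ (so $L$ is a point and $\partial L=\emptyset$) of $(\star)$ is exactly the statement that the cone on a collapsible complex collapses onto that complex, which is handled by the same bookkeeping (after $G\searrow w$ the cone collapses onto $G$ together with a single edge from the cone point to $w$, which is then collapsed away); the cases where $L_1$ or $L_2$ is a point enter the main argument only through this. Putting everything together, $W\searrow G_1\ast S_2$ and $G_1\ast S_2$ is collapsible, so $W$ is collapsible, which is the assertion of the lemma.
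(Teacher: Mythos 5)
Your proposal is correct and follows essentially the same route as the paper: both reduce the lemma to the collapse $L_1\ast G_2\searrow \partial L_1\ast G_2$ (after suitable subdivision), glue it along the intersection $(G_1\ast S_2)\cap(L_1\ast G_2)=\partial L_1\ast G_2$ to get $W\searrow G_1\ast S_2$, and finish by observing that $G_1\ast S_2$ is collapsible because $G_1$ is. The only divergence is that you prove the key collapse $(\star)$ by an explicit (and correct) pairing argument, whereas the paper identifies a subdivision of $L_1$ with a derived subdivision of $\Delta^r$ and cites \cite[Corollary III.4]{Gla}; the one point where the paper is more careful than you is the final composition of ``$W\searrow G_1\ast S_2$ up to subdivision'' with ``$G_1\ast S_2$ is collapsible'', which requires taking the second subdivision to be stellar (via \cite[Theorem III.6]{Gla}) so that it does not destroy the first collapse.
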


\begin{proof} Since $G_1$ and $G_2$ are collapsible, there exist subdivisions $\epsilon_1,\epsilon_2$ such that $\epsilon_1 G_1\searrow 0$ and $\epsilon_2 G_2\searrow 0$. We can extend these subdivisions to $S_1$ and $S_2$ and then suppose without loss of generality that $G_1\searrow 0$ and $G_2\searrow 0$. Note that $$G_1\ast S_2\cap L_1\ast G_2=\bd{L}_1\ast G_2.$$
We will show that some subdivision of $L_1\ast G_2$ collapses to (the induced subdivision of) $\bd{L}_1\ast G_2$. Let $\alpha$ be an arbitrary subdivision of $L_1$ and $\delta$ a derived subdivision of $\Delta^r$ such that $\alpha L_1=\delta\Delta^r$. Then, $\alpha(L_1\ast G_2)=\delta(\Delta^r\ast G_2)$. Since $G_2\searrow 0$, then $\Delta^r\ast G_2\searrow\bd{\Delta}^r\ast G_2$ (\cite[Corollary III.4]{Gla}). Therefore
$$\alpha(L_1\ast G_2) =  \delta(\Delta^r\ast G_2) \searrow  \delta(\bd{\Delta}^r\ast G_2) =  \alpha(\bd{L}_1\ast G_2).$$
We extend $\alpha$ to $(G_1\ast S_2+L_1\ast G_2)$ and then
$$\alpha(G_1\ast S_2+L_1\ast G_2)=\alpha(G_1\ast S_2)+\alpha(L_1\ast G_2)\searrow\alpha(G_1\ast S_2)+\alpha(\bd{L}_1\ast G_2)=\alpha(G_1\ast S_2).$$
By \cite[Theorem III.6]{Gla} there is a stellar subdivision $s$ such that $s\alpha G_1\searrow 0$ and therefore
$$s\alpha(G_1\ast S_2+L_1\ast G_2)\searrow s\alpha(G_1\ast S_2)=s\alpha G_1\ast s\alpha S_2\searrow 0.$$
\end{proof}

\begin{teo}\label{thm join of q-balls and q-spheres} Let $B_1,B_2$ be $NH$-balls and $S_1,S_2$ be $NH$-spheres. Then, \begin{enumerate}\item $B_1\ast B_2$ and $B_1\ast S_2$ are $NH$-balls.\item $S_1\ast S_2$ is an $NH$-sphere.\end{enumerate}\end{teo}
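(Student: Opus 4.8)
The plan is to prove both parts simultaneously by induction on $\dim K_1+\dim K_2$, where $K_1,K_2$ denote the two disjoint factors; the degenerate cases (a $(-1)$-sphere factor, handled by $\emptyset\ast X=X$, or a single-vertex factor, handled by Lemma \ref{lemma simplex join q-ball q-sphere}) provide the base of the induction. For part (1), to see that $B_1\ast B_2$ (resp.\ $B_1\ast S_2$) is an $NH$-manifold I would compute $lk(v,\cdot)$ for each vertex: for $v\in V_{B_1}$ this link equals $lk(v,B_1)\ast B_2$, and by the regularity Proposition \ref{prop regularity of NH-manifolds} the factor $lk(v,B_1)$ is an $NH$-ball or $NH$-sphere of smaller dimension, so $\dim lk(v,B_1)+\dim B_2<\dim B_1+\dim B_2$ and the inductive hypothesis (together with the commutativity of the join) makes the link an $NH$-ball or sphere; vertices of $B_2$ or of $S_2$ are handled the same way. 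Collapsibility is easy: if $\beta B_2\searrow 0$ then $B_1\ast\beta B_2$ is a subdivision of $B_1\ast B_2$ collapsing onto the cone $B_1\ast v$, which collapses to $v$; similarly $B_1\ast S_2$ is collapsible using a subdivision that collapses $B_1$. A collapsible $NH$-manifold being an $NH$-ball, part (1) follows.

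For part (2), fix decompositions $S_1=G_1+L_1$ and $S_2=G_2+L_2$ with $L_i$ a combinatorial $r_i$-ball. The same link computation (now using \emph{both} parts of the inductive hypothesis, through $lk(v,S_1)\ast S_2$ and $S_1\ast lk(v,S_2)$) shows $S_1\ast S_2$ is an $NH$-manifold. I would then set $L:=L_1\ast L_2$, a combinatorial $(r_1+r_2+1)$-ball, and $G:=G_1\ast S_2+L_1\ast G_2=G_1\ast G_2+G_1\ast L_2+L_1\ast G_2$; routine join/union bookkeeping (using $G_i\cap L_i=\bd L_i$ and $\bd(L_1\ast L_2)=\bd L_1\ast L_2+L_1\ast\bd L_2$) gives $G+L=S_1\ast S_2$ and $G\cap L=\bd L$. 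Lemma \ref{lemma previous to joins} says exactly that $G$ is collapsible, so the whole problem reduces to showing that $G$ is an $NH$-manifold: once that is done, $S_1\ast S_2=G+L$ is a decomposition exhibiting $S_1\ast S_2$ as an $NH$-sphere of homotopy dimension $r_1+r_2+1$.

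To prove $G$ is an $NH$-manifold I would invoke Proposition \ref{prop of pseudofrontier} with $K=G$, $B=L$: we already know $K+B=S_1\ast S_2$ is an $NH$-manifold and $K\cap B=\bd L$ is a combinatorial sphere, hence homogeneous of dimension $\dim L-1$, and the proof of that proposition also yields that $G$ and $L$ are top generated. It remains to check that $lk(\sigma,G)$ is collapsible for every $\sigma\in\bd L$. Writing $\sigma=\sigma_1\ast\sigma_2$ with $\sigma_i\in L_i$, membership $\sigma\in\bd(L_1\ast L_2)$ forces $\sigma_1\in\bd L_1$ or $\sigma_2\in\bd L_2$. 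If only one holds, say $\sigma_2$ is interior in $L_2$, then $\sigma\notin L_1\ast G_2$ and $lk(\sigma_2,S_2)=lk(\sigma_2,L_2)$ is a combinatorial sphere, so $lk(\sigma,G)=lk(\sigma_1,G_1)\ast lk(\sigma_2,L_2)$, which is an $NH$-ball (hence collapsible) by Corollary \ref{join of balls and spheres with q-balls and q-spheres}. If both $\sigma_1\in\bd L_1$ and $\sigma_2\in\bd L_2$, then $lk(\sigma,G)=lk(\sigma_1,G_1)\ast lk(\sigma_2,S_2)+lk(\sigma_1,L_1)\ast lk(\sigma_2,G_2)$, and — using $\sigma_i\in\bd L_i$ and $lk(\sigma_i,\bd L_i)=\bd(lk(\sigma_i,L_i))$ — the equalities $lk(\sigma_i,S_i)=lk(\sigma_i,G_i)+lk(\sigma_i,L_i)$ are decompositions of the $NH$-spheres $lk(\sigma_i,S_i)$ (this uses a point taken up below), so $lk(\sigma,G)$ has exactly the form covered by Lemma \ref{lemma previous to joins} and is collapsible. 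Proposition \ref{prop of pseudofrontier} then gives that $G$ is an $NH$-manifold and closes the induction.

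The step I expect to be the real obstacle is the one hidden in the previous paragraph: both delicate cases only work because $lk(\sigma_1,G_1)$ (and $lk(\sigma_2,G_2)$) is an $NH$-\emph{ball}, whereas regularity only asserts it is an $NH$-ball \emph{or} $NH$-sphere. So one genuinely needs the auxiliary fact that if $S=B+L$ is an $NH$-sphere decomposition and $\sigma\in\bd L$, then $lk(\sigma,B)$ is an $NH$-ball and $lk(\sigma,S)=lk(\sigma,B)+lk(\sigma,L)$ is again a decomposition. I would prove this as a preliminary lemma (or as an extra clause carried through the same induction), ruling out the ``sphere'' alternative left open by regularity by combining the classification of disconnected $NH$-spheres (a point plus an $NH$-ball) with the well-definedness of the homotopy dimension — attaching the combinatorial ball $lk(\sigma,L)$ along its boundary to a putative $NH$-sphere $lk(\sigma,B)$ would violate one of these. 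Setting up this bookkeeping carefully, while keeping the induction parameter strictly decreasing at every link invocation (which it does, by regularity), is where the bulk of the work lies.
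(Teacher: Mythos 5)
Your proposal follows essentially the same route as the paper: the same induction on $\dim K_1+\dim K_2$ with Lemma \ref{lemma simplex join q-ball q-sphere} as base, the same decomposition $S_1\ast S_2=(G_1\ast S_2+L_1\ast G_2)+L_1\ast L_2$, and the same combination of Lemma \ref{lemma previous to joins} and Proposition \ref{prop of pseudofrontier} with the identical case analysis on $\sigma_1\ast\sigma_2\in\bd(L_1\ast L_2)$. The auxiliary fact you single out as the real obstacle (that for $\sigma\in\bd L$ the link $lk(\sigma,B)$ is an $NH$-ball and $lk(\sigma,S)=lk(\sigma,B)+lk(\sigma,L)$ is again a decomposition) is exactly the paper's Lemma \ref{lemma border of q-spheres}, which it likewise isolates as a preliminary lemma and invokes at that point.
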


\begin{proof} Let $K_1$ represent $B_1$ or $S_1$ and let $K_2$ represent $B_2$ or $S_2$. We must show that $K_1\ast K_2$ is an $NH$-ball or an $NH$-sphere. We proceed by induction on $s=\dim{K_1}+\dim{K_2}$. If $s=0,1$ the result follows from Lemma \ref{lemma simplex join q-ball q-sphere}. Let $s\geq 2$. We show first that $K_1\ast K_2$ is an $NH$-manifold. Let $v\in K_1\ast K_2$ be a vertex. Then,
$$lk(v,K_1\ast K_2)=\left\{
\begin{array}{ll}
lk(v,K_1)\ast K_2 & v\in K_1 \\
K_1\ast lk(v,K_2) & v\in K_2
\end{array}
\right.$$
Since $\dim{lk(v,K_1)}+\dim{K_2}=\dim{K_1}+\dim{lk(v,K_2)}=s-1$, then by induction, $lk(v,K_1\ast K_2)$ is an $NH$-ball or an $NH$-sphere. It follows that $K_1\ast K_2$ is an $NH$-manifold. Now, if $K_1=B_1$ or $K_2=B_2$, then $K_1\ast K_2\searrow 0$ and  $K_1\ast K_2$ is an $NH$-ball.

We prove now that $S_1\ast S_2$ is an $NH$-sphere. Decompose $S_1=G_1+L_1$ and $S_2=G_2+L_2$. Note that $S_1\ast S_2 = (G_1\ast S_2+L_1\ast G_2)+L_1\ast L_2$ and that $(G_1\ast S_2+L_1\ast G_2)\cap (L_1\ast L_2) = \bd(L_1\ast L_2)$, then it suffices to show that $(G_1\ast S_2+L_1\ast G_2)$ is an $NH$-ball. By Lemma \ref{lemma previous to joins} it is collapsible, so we only need to check that $(G_1\ast S_2+L_1\ast G_2)$ is an $NH$-manifold. In order to prove this, we apply Proposition \ref{prop of pseudofrontier} to the complex $G_1\ast S_2+L_1\ast G_2$ and the combinatorial ball $L_1\ast L_2$. The only non-trivial fact is that $lk(\sigma,G_1\ast S_2+L_1\ast G_2)$ is collapsible for $\sigma\in\bd(L_1\ast L_2)$. To see this, take $\eta\in \bd(L_1\ast L_2)=\bd{L}_1\ast L_2+L_1\ast\bd{L}_2$ and write $\eta=l_1\ast l_2$ with $l_1\in L_1$, $l_2\in L_2$. Then,
$$lk(\eta,G_1\ast S_2+L_1\ast G_2)= lk(l_1,G_1)\ast lk(l_2,S_2)+lk(l_1,L_1)\ast lk(l_2,G_2).$$
Now, if $l_1\in L_1-\bd{L}_1$ then $lk(l_1\ast l_2,G_1\ast S_2)=\emptyset$ and $lk(\eta,G_1\ast S_2+L_1\ast G_2)=lk(l_1,L_1)\ast lk(l_2,G_2)\searrow 0$. By a similar argument, the same holds if $l_2\in L_2-\bd{L}_2$. If $l_1\in\bd{L}_1$ and $l_2\in\bd{L}_2$ then $lk(l_1,S_1)=lk(l_1,G_1)+lk(l_1,L_1)$ and $lk(l_2,S_2)=lk(l_2,G_2)+lk(l_2,L_2)$ are $NH$-spheres (by Lemma \ref{lemma border of q-spheres}). By Lemma \ref{lemma previous to joins}, it follows that $lk(\eta,G_1\ast S_2+L_1\ast G_2)$ is also collapsible. By Proposition \ref{prop of pseudofrontier}, we conclude that $G_1\ast S_2+L_1\ast G_2$ is an $NH$-manifold.\end{proof}

The following result will be used in the next section. First we need a definition.

\begin{defi}
Two principal simplices $\sigma,\tau\in M$ are said to be \textit{adjacent} if the intersection $\tau\cap\sigma$ is an immediate face of $\sigma$ or $\tau$.
\end{defi}

\begin{lema}\label{prop connected q-manifold is q-pseudo} Let $M$ be a connected $NH$-manifold. Then
\begin{enumerate}\item For each ridge $\sigma\in M$, $lk(\sigma,M)$ is either a point or an $NH$-sphere of homotopy dimension $0$.\item Given any two principal simplices $\sigma,\tau\in M$, there exists a sequence $\sigma=E_1,\ldots,E_s=\tau$ of principal simplices of $M$ such that $E_i$ is adjacent to $E_{i+1}$ for every $1\leq i\leq s-1$.\end{enumerate}\end{lema}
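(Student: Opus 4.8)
The two statements are closely linked: item (1) is essentially a local fact about links of ridges, and item (2) is a global connectedness fact that will follow by a standard chaining argument once (1) is available. I would prove them in that order.

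For item (1): Let $\sigma$ be a ridge of $M$, say $\dim\sigma = k$, so $\sigma$ is an immediate face of some principal simplex $\eta$ with $\dim\eta = k+1$. By Proposition~\ref{prop regularity of NH-manifolds}, $lk(\sigma,M)$ is an $NH$-ball or an $NH$-sphere of dimension less than $n-k$. But $\sigma$ being a ridge means $\sigma\ast v\in M$ for the vertex $v = \eta\setminus\sigma$, while $\sigma$ is not a face of any simplex of dimension $\geq k+2$ containing it through a $1$-simplex of the link — more precisely $lk(\sigma,M)$ contains at least one vertex but, since $\sigma$ is a ridge (immediate face of a maximal simplex), any edge of $lk(\sigma,M)$ would make $\sigma$ a face of a $(k+2)$-simplex. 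One must be slightly careful: $\sigma$ can still be a proper face of several maximal simplices of the same dimension $k+1$, giving several isolated vertices in $lk(\sigma,M)$; but it cannot be a proper face of a simplex of dimension $\geq k+2$, for then it would not be a ridge. Hence $lk(\sigma,M)$ is a complex of dimension $0$, i.e.\ a finite set of points. Combined with the fact that a $0$-dimensional $NH$-ball is a point and a $0$-dimensional $NH$-sphere is either a $0$-sphere (two points) or the disjoint union of a point and a $0$-ball (again two points, by the Remark following the definition of $NH$-sphere), we get that $lk(\sigma,M)$ is either a single point or exactly two points. Two points constitute an $NH$-sphere of homotopy dimension $0$ (it is $\bd\Delta^1$, or equivalently a point disjoint union a $0$-ball), which is what is claimed.

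For item (2): I would argue by connectedness of $|M|$. Fix a principal simplex $\sigma$ and let $\mathcal C$ be the set of principal simplices $\tau$ of $M$ that can be reached from $\sigma$ by a chain of pairwise adjacent principal simplices. Let $X = \bigcup_{\tau\in\mathcal C} \tau$ and $Y = \bigcup_{\tau\notin\mathcal C}\tau$ (unions of closed simplices, so closed subsets whose union is $|M|$). The key claim is $X\cap Y$ is empty, which by connectedness of $|M|$ forces $Y=\emptyset$, i.e.\ $\mathcal C$ is everything. Suppose some point $p$ lies in $X\cap Y$; then $p$ lies in the interior of some simplex $\rho$ which is a common face of a principal simplex $\tau\in\mathcal C$ and a principal simplex $\tau'\notin\mathcal C$. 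Among all such $\rho$, pick one of maximal dimension. If $\rho$ is itself principal then $\rho\in\mathcal C$ and $\rho\notin\mathcal C$, absurd; so $\rho$ is a proper face of something. Consider $lk(\rho, M)$, which by Proposition~\ref{prop regularity of NH-manifolds} is an $NH$-ball or $NH$-sphere; its vertices correspond to the simplices of $M$ immediately above $\rho$. The principal simplices of $M$ containing $\rho$ correspond to principal simplices of $lk(\rho,M)$ joined to $\rho$. If $lk(\rho,M)$ is connected (which holds for any $NH$-ball, and for every $NH$-sphere except the one of homotopy dimension $0$), then one can walk inside $lk(\rho,M)$ from a vertex/principal face witnessing $\tau$ to one witnessing $\tau'$; at the moment the walk crosses a ridge of $lk(\rho,M)$, part~(1) applied in $lk(\rho,M)$ (its ridges have links of cardinality one or two, and being connected the relevant local move is available) shows the two sides are adjacent in $M$ — more directly, adjacency of $\tau,\tau'$ through the face $\rho$ of one more dimension gives a contradiction with maximality of $\dim\rho$ unless the two are already adjacent. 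If $lk(\rho,M)$ is the disconnected $NH$-sphere (a point $\sqcup$ an $NH$-ball), then $\rho$ has a neighborhood consisting of two pieces meeting only along $st(\rho,\text{ball part})\cap(\rho\ast\text{pt})=\rho$; but then $p\in\rho$ still lies in the closure of the interior of a higher simplex on the ball side, contradicting maximality of $\dim\rho$, unless $\rho$ is already a ridge with link two points — and then by definition $\tau$ and $\tau'$ are adjacent (their intersection $\rho$ is an immediate face of both). In every case we reach a contradiction, so $X\cap Y=\emptyset$ and we are done.

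**Main obstacle.** The delicate point is the inductive/chaining step in part~(2): carefully relating "connectivity inside $lk(\rho,M)$" to "adjacency of principal simplices of $M$", and handling the one genuinely disconnected $NH$-sphere (homotopy dimension $0$) that can appear as a link. I expect the cleanest route is actually to do part~(2) by induction on $n = \dim M$, using part~(1) as the base mechanism and using that links $lk(v,M)$ are connected $NH$-manifolds of smaller dimension (again with the sole exception of the disconnected $NH$-sphere, which must be argued to not obstruct global connectedness of $M$ since its two components are still joined through a higher-dimensional simplex of $M$). This bookkeeping — isolating exactly where the homotopy-dimension-$0$ case is harmless — is the part that needs the most care; the rest is a routine pseudomanifold-style argument.
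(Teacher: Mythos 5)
Your argument for item (1) contains a genuine error. You claim that a ridge $\sigma$ of dimension $k$ ``cannot be a proper face of a simplex of dimension $\geq k+2$, for then it would not be a ridge,'' and conclude that $lk(\sigma,M)$ is $0$-dimensional, hence one or two points. But a ridge is only required to be an immediate face of \emph{some} maximal simplex; it may perfectly well also be a deep face of other, higher-dimensional maximal simplices. Concretely, let $M$ be the $2$-simplex $abc$ together with an additional maximal edge $ad$: this is a connected $NH$-manifold, the vertex $a$ is a ridge (it is an immediate face of the maximal simplex $ad$), yet $a$ is a codimension-$2$ face of $abc$, and $lk(a,M)$ is the $1$-dimensional complex consisting of the edge $bc$ together with the isolated point $d$. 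This is an $NH$-sphere of homotopy dimension $0$ which is not a pair of points; note that the lemma's conclusion deliberately allows this, since an $NH$-sphere of homotopy dimension $0$ is a point together with a disjoint $NH$-ball of arbitrary dimension. The statement you actually prove is therefore both different from the lemma and false. The correct local argument is shorter: by Proposition \ref{prop regularity of NH-manifolds}, $lk(\sigma,M)$ is an $NH$-ball or $NH$-sphere; since $\sigma\prec\sigma\ast v$ with $\sigma\ast v$ maximal, the vertex $v$ is principal (isolated) in $lk(\sigma,M)$; an $NH$-ball is connected (it is collapsible), so if it contains an isolated vertex it is that single point; and a connected $NH$-sphere has homotopy dimension at least $1$ and cannot contain an isolated vertex, so an $NH$-sphere containing one must be disconnected, i.e.\ of homotopy dimension $0$.

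Item (2) is also not yet a proof. The step where you ``walk inside $lk(\rho,M)$'' from a principal simplex witnessing $\tau$ to one witnessing $\tau'$ and deduce adjacency in $M$ is precisely the content that has to be established, and it requires the inductive set-up you only gesture at in your closing paragraph, including a careful treatment of the disconnected link $B+\ast$, whose two components are joined only through the cone point of the star and not through any higher simplex. The paper organizes this differently: it first proves a separate lemma asserting that if $st(v,K)$ is an $NH$-pseudo manifold for every vertex $v$ of a connected complex $K$ then so is $K$ (chaining along edge paths between vertices of the two given principal simplices), and then shows by induction on dimension that $st(v,M)=v\ast lk(v,M)$ is an $NH$-pseudo manifold, the case $lk(v,M)=B+\ast$ being handled by observing that the cone $v\ast B$ is an $NH$-pseudo manifold and the extra edge is attached at $v$. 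You would need to carry out an argument of comparable precision for your ``open and closed'' strategy to go through.
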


By analogy with the homogeneous case, a complex $K$ satisfying properties (1) and (2) of this lemma will be called an \emph{$NH$-pseudo manifold}. For more details on (homogeneous) pseudo manifolds we refer the reader to \cite{Mun} (see also \cite{Spa}). The proof of Lemma \ref{prop connected q-manifold is q-pseudo} will follow from the next result.

\begin{lema}\label{lemma star q-pseudo is q-pseudo} If $K$ is a connected complex such that $st(v,K)$ is an $NH$-pseudo manifold for all $v\in V_K$ then $K$ is an $NH$-pseudo manifold. \end{lema}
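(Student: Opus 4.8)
The plan is to verify the two defining properties of an $NH$-pseudo manifold for $K$, using the fact that each $st(v,K)$ already satisfies them. For property (1), let $\sigma$ be a ridge of $K$, say $\sigma\prec\eta$ with $\eta$ principal. Pick any vertex $v\in\sigma$. Then $\sigma\in st(v,K)$ and $\sigma$ is still a ridge of $st(v,K)$ (its cofaces in $K$ all contain $v$, since they contain $\sigma$). Crucially $lk(\sigma,st(v,K))=lk(\sigma,K)$, because a simplex $\mu$ disjoint from $\sigma$ with $\mu\ast\sigma\in K$ automatically has $v\in\mu\ast\sigma$ and hence $\mu\ast\sigma\in st(v,K)$. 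Since $st(v,K)$ is an $NH$-pseudo manifold, $lk(\sigma,st(v,K))$ is a point or an $NH$-sphere of homotopy dimension $0$, and the same then holds for $lk(\sigma,K)$. This gives property (1).

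For property (2), I first note that two principal simplices $\sigma,\tau$ that share a vertex $v$ can be connected by a chain of pairwise adjacent principal simplices: both lie in $st(v,K)$, where they are still principal (a coface in $K$ of a principal simplex $\rho\ni v$ of $st(v,K)$ would be a coface in $K$, contradicting principality), so property (2) inside $st(v,K)$ gives the desired chain $\sigma=E_1,\dots,E_s=\tau$; and adjacency computed in $st(v,K)$ implies adjacency in $K$ since the relevant intersections are the same. Now for arbitrary principal $\sigma,\tau$, use connectedness of $K$: there is a sequence of principal simplices $\sigma=P_0,P_1,\dots,P_m=\tau$ such that consecutive ones share a vertex — this follows because the \emph{vertices} of $K$ are connected through edges, every edge lies in some principal simplex, and every vertex lies in a principal simplex, so one can walk from a principal simplex containing a vertex of $\sigma$ to one containing a vertex of $\tau$ changing one vertex at a time; splicing in $\sigma$ and $\tau$ at the ends (they each share a vertex with $P_0$, resp. $P_m$, by construction) yields such a chain. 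Then concatenate the local adjacency-chains obtained for each consecutive pair $P_{i},P_{i+1}$.

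The main obstacle is the bookkeeping in the ``shares a vertex'' step of property (2): one must be a little careful that the intermediate principal simplices genuinely form a connected chain. The clean way is to argue on the graph $\Gamma$ whose vertices are the principal simplices of $K$ and with an edge between two principal simplices iff they share a $0$-simplex; one shows $\Gamma$ is connected because $K$ is connected and every vertex and every edge of $K$ is contained in a principal simplex, so a path in the $1$-skeleton of $K$ from a vertex of $\sigma$ to a vertex of $\tau$ lifts to a walk in $\Gamma$. Once $\Gamma$ is connected, the local result (pairs sharing a vertex are connected by adjacency-chains) upgrades global connectivity in $\Gamma$ to global connectivity under adjacency, which is exactly property (2). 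Everything else is the routine identity $lk(\sigma,st(v,K))=lk(\sigma,K)$ for $v\in\sigma$ and the observation that principality and adjacency are inherited from $st(v,K)$ to $K$.
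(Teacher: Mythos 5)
Your proposal is correct and follows essentially the same route as the paper: property (1) via the identity $lk(\sigma,st(v,K))=lk(\sigma,K)$ for a vertex $v\in\sigma$, and property (2) by taking an edge path between vertices of the two principal simplices, covering its edges by principal simplices, and using the hypothesis on the star of each shared vertex to splice together adjacency chains. Your graph-$\Gamma$ packaging is just a reformulation of the paper's induction on the length of the edge path, so there is nothing substantive to add.
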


\begin{proof} We will show that $K$ satisfies properties (1) and (2) of Lemma \ref{prop connected q-manifold is q-pseudo}.
Let $\sigma\in K$ be a ridge and let $v\in\sigma$ be any vertex. Then $\sigma$ is also a ridge in $st(v,K)$ and $lk(\sigma,K)=lk(\sigma,st(v,K))$. Therefore $K$ satisfies property (1).

Let $\nu,\tau\in K$ be maximal simplices and let $v\in\nu,\ w\in\tau$. Take an edge path from $v$ to $w$. We will prove that $K$ satisfies property (2) by induction on the length $r$ of the edge path. If $r=0$, then $v=w$. In this case, $\nu,\tau\in st(v,K)$ and the results follows by hypothesis. Suppose now that $\psi_1,\ldots,\psi_r$ is an edge path from $v$ to $w$ of length $r\geq 1$. Take maximal simplices $E_i$ such that $\psi_i\leq E_i$. Note that $E_1\cap E_2$ contains the vertex $\psi_1\cap \psi_2$. By hypothesis, $st(\psi_1\cap\psi_2,K)$ satisfies property (2) and therefore we can join $E_1$ with $E_2$ by a sequence of adjacent maximal simplices. Now the result follows by induction.
\end{proof}

\begin{proof}[Proof of Lemma \ref{prop connected q-manifold is q-pseudo}] We proceed by induction on the dimension $n$ of $M$. By Lemma \ref{lemma star q-pseudo is q-pseudo}, it suffices to prove that $st(v,M)$ is an $NH$-pseudo manifold for every vertex $v$. The case $n=0$ is trivial. Suppose that $n\geq 1$ and that the result is valid for $k\leq n-1$. Now, if $lk(v,M)$ is an $NH$-ball or a connected $NH$-sphere then, by induction, it is an $NH$-pseudo manifold. It follows that $st(v,M)$ is also an $NH$-pseudo manifold since it is a cone of an $NH$-pseudo manifold. In the other case, $lk(v,M)$ is an $NH$-sphere of homotopy dimension $0$ of the form $B+\ast$, for some $NH$-ball $B$. Since $vB$ is an $NH$-pseudo manifold, it follows that  $st(v,M)$ is also an $NH$-pseudo manifold.\end{proof}

\section{Boundary, pseudo boundary and the Anomaly Complex}

The concept of boundary is not defined in the non-homogeneous setting and, in fact, it is not clear what a boundary of a general complex could be. However, the characterization of the boundary of combinatorial manifolds allows us to extend this notion to the class of $NH$-manifolds.

\begin{defi} Let $M$ be an $NH$-manifold. The \emph{pseudo boundary} of $M$ is the set of simplices $\pbd M$ whose links are $NH$-balls. The \emph{boundary} of $M$ is the subcomplex $\bd M$ spanned by $\pbd M$. In other words,  $\bd{M}$ is the closure $\overline{\pbd M}$.\end{defi}

\begin{figure}[h]
\centering
\includegraphics[width=6.00in,height=2.00in]{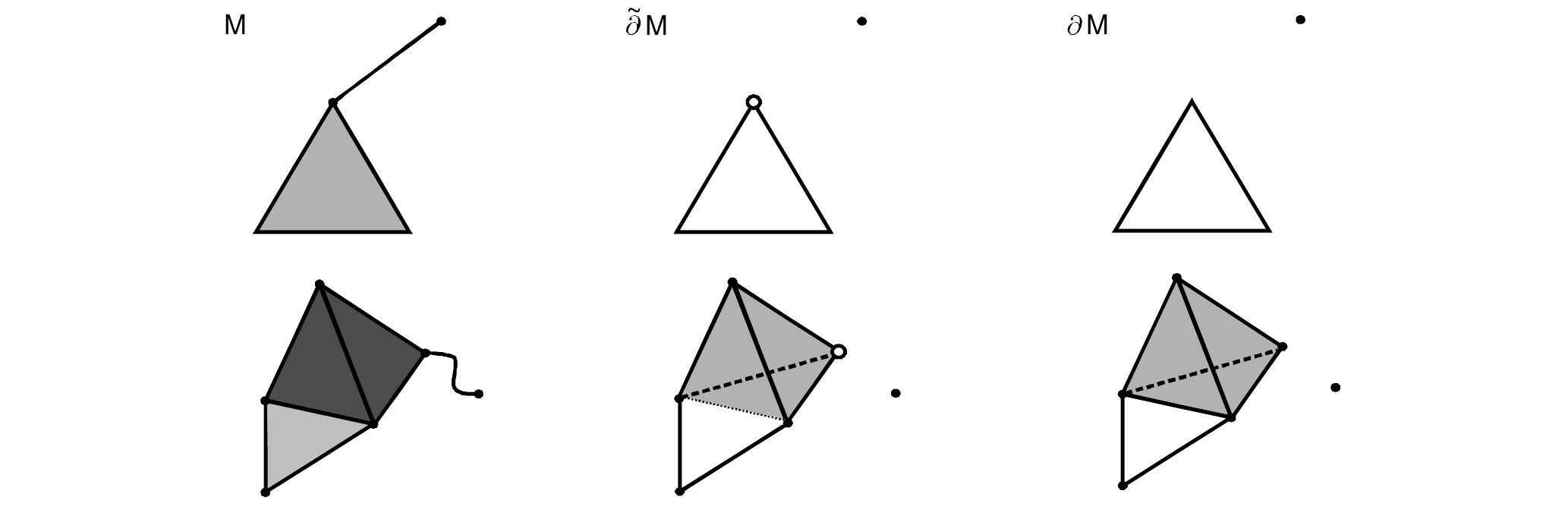}
\caption{Boundary and pseudo boundary.}
\label{fig:ejemplos_borde_y_pseudo}
\end{figure}


It is clear that $\pbd M=\bd{M}$ for any combinatorial manifold $M$. We will see that, in fact, this is the only case where this happens. The result will follow from the next lemma.

\begin{lema}\label{lemma simplex in two principal is in boundary} Let $M$ be an $NH$-manifold and let $\sigma\in M$. If $\sigma$ is a face of two principal simplices of different dimensions then $\sigma\in\partial M$.\end{lema}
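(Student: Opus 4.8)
The plan is to argue by induction on $\dim M = n$ and reduce, as in Lemma \ref{lemma star q-pseudo is q-pseudo}, to a statement about links, using the regularity of $NH$-manifolds (Proposition \ref{prop regularity of NH-manifolds}) together with the inductive description of $NH$-spheres. Let $\sigma$ be a face of two principal simplices $\eta_1,\eta_2$ with $\dim\eta_1<\dim\eta_2$; say $\dim\sigma=k$. The goal is to show $lk(\sigma,M)$ is an $NH$-ball, equivalently $\sigma\in\pbd M\subset\bd M$.

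First I would pick a vertex $v\in\sigma$ and pass to $lk(v,M)$, which is an $NH$-ball or $NH$-sphere of dimension $<n$; the images of $\eta_1,\eta_2$ are principal simplices there of differing dimensions having the common face (the image of) $\sigma$. If $lk(v,M)$ is an $NH$-ball this is immediate from the inductive hypothesis together with the fact that a face of a maximal simplex in an $NH$-ball still has $NH$-ball link in $M$ via $lk(\sigma,M)=lk(\sigma\setminus v, lk(v,M))$ suitably interpreted — more carefully, I would use $lk(\sigma, M) = lk(\bar\sigma, lk(v,M))$ where $\bar\sigma$ is the face of $\sigma$ opposite $v$ inside $lk(v,M)$, so the problem genuinely reduces one dimension. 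The delicate case is when $lk(v,M)$ is an $NH$-sphere $S=B+L$: here I must rule out that $\sigma$ survives as a simplex whose link is an $NH$-sphere.

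The key geometric input is that in an $NH$-pseudo manifold (Lemma \ref{prop connected q-manifold is q-pseudo}) two principal simplices of \emph{different} dimensions cannot both be maximal in a way that produces a sphere link: concretely, if $lk(\sigma,M)$ were an $NH$-sphere, then by the decomposition every principal simplex of $M$ containing $\sigma$ would correspond to a principal simplex of $lk(\sigma,M)$, and connectedness of the relevant star forces these to be joined by an adjacency chain through ridges, which (by part (1) of Lemma \ref{prop connected q-manifold is q-pseudo}, $NH$-spheres of homotopy dimension $0$ being the only disconnected links) keeps their dimensions equal along the chain — contradicting $\dim\eta_1\neq\dim\eta_2$. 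So the heart of the argument is: \emph{in a connected $NH$-pseudo manifold all maximal simplices reachable by adjacency have the same dimension unless one passes through a ridge with disconnected link}, and a ridge with disconnected ($=$ homotopy dimension $0$) link is exactly the situation that forces one of its two cofaces into the boundary. I would isolate this as the main step.

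Once the link-reduction and the adjacency-dimension observation are in place, the proof assembles quickly: if $\dim\eta_1<\dim\eta_2$ and $\sigma\leq\eta_1\cap\eta_2$, then $lk(\sigma,M)$ contains two principal simplices of dimensions $\dim\eta_1-k-1$ and $\dim\eta_2-k-1$; if this link were an $NH$-sphere it would be an $NH$-pseudo manifold with all its maximal simplices of equal dimension (since an $NH$-sphere is connected unless of homotopy dimension $0$, and in the homotopy-dimension-$0$ case it is a point disjoint from an $NH$-ball, whose maximal simplices again do not "mix" a point with the ball in a single component) — contradiction. Hence $lk(\sigma,M)$ is an $NH$-ball and $\sigma\in\pbd M\subseteq\bd M$. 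The main obstacle I anticipate is making the "adjacency preserves dimension" claim fully rigorous: one must handle the homotopy-dimension-$0$ spheres carefully, since there a ridge can have a disconnected link $\{pt\}+ B'$, and this is precisely the configuration where a maximal simplex and its neighbor \emph{do} lie on the boundary — so the argument must show this forces $\sigma$ itself into $\bd M$ rather than breaking the proof.
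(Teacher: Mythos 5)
Your argument aims at the wrong target: you set out to show that $lk(\sigma,M)$ is an $NH$-ball, i.e.\ that $\sigma\in\pbd M$. But the lemma only asserts $\sigma\in\partial M=\overline{\pbd M}$ (that $\sigma$ is a \emph{face of} some simplex with $NH$-ball link), which is strictly weaker, and the stronger statement you are trying to prove is false. Take $M$ to be a $2$-simplex $abc$ with an extra edge $cd$ attached at the vertex $c$: this is an $NH$-manifold, $c$ is a face of the principal simplices $abc$ and $cd$ of different dimensions, yet $lk(c,M)=ab+d$ is a non-homogeneous $NH$-sphere of homotopy dimension $0$, not an $NH$-ball. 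The same example shows that the contradiction at the heart of your argument does not exist: a non-homogeneous $NH$-sphere $B+\ast$ with $\dim B\geq 1$ \emph{does} contain principal simplices of different dimensions, and the two principal simplices through $\sigma$ land in \emph{different} components of such a link (one in $B$, one at the isolated point), so your parenthetical remark that maximal simplices ``do not mix'' within a single component does not exclude this case. Relatedly, your adjacency-preserves-dimension principle is misstated: \emph{every} ridge shared by two principal simplices has disconnected link (either $S^0$ or $B+\ast$), so ``passing through a ridge with disconnected link'' is not an exceptional event; the relevant dichotomy is whether that link is homogeneous.

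The paper's proof exploits exactly the configuration you try to rule out. By Lemma \ref{prop connected q-manifold is q-pseudo} one may assume the two principal simplices $\tau_1=\sigma\ast\eta_1$ and $\tau_2=\sigma\ast\eta_2$ are adjacent, with $\rho=\tau_1\cap\tau_2\geq\sigma$ an immediate face of the lower-dimensional one, say $\tau_1$. Then $lk(\rho,M)$ is an $NH$-sphere of homotopy dimension $0$ of the form $lk(\rho,M-\tau_1)+\ast$ with $\dim lk(\rho,M-\tau_1)\geq 1$; by Proposition \ref{prop boundary homogeneous} such a complex has non-empty pseudo-boundary, and any $\nu\in\pbd\, lk(\rho,M)$ gives $\nu\ast\rho\in\pbd M$ with $\sigma<\nu\ast\rho$, hence $\sigma\in\partial M$. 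The conclusion is reached not by showing that $\sigma$ itself has ball link, but by producing a coface of $\sigma$ that does. Your identity $lk(\sigma,M)=lk(\bar\sigma,lk(v,M))$ is correct, but with the corrected goal an induction along links would have to track membership in the boundary via $lk(v,\partial M)=\partial\, lk(v,M)$ rather than membership in the pseudo-boundary, and it would still need the pseudo-boundary production step above as its engine.
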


\begin{proof} Let $\tau_1=\sigma\ast\eta_1$ and $\tau_2=\sigma\ast\eta_2$ be principal simplices such that $\dim{\tau_1}\neq\dim{\tau_2}$. By Lemma \ref{prop connected q-manifold is q-pseudo} we may assume that $\tau_1$ and $\tau_2$ are adjacent. Let $\rho=\tau_1\cap\tau_2$ and suppose $\rho\prec\tau_1$. Then, $lk(\rho,M)$ is an $NH$-sphere of homotopy dimension $0$ with decomposition $lk(\rho,M-\tau_1)+\ast$. Since $\dim{lk(\rho,M-\tau_1)}\geq 1$ then $\pbd{lk(\rho,M-\tau_1)}=\pbd{lk(\rho,M)}$ is non-empty. For any simplex $\nu$ in $\pbd{lk(\rho,M)}$,  $\nu\ast\rho\in\tilde{\partial} M$. Thus $\sigma\in\partial M$.\end{proof}

\begin{prop}\label{prop boundary homogeneous} If $M$ is a connected $NH$-manifold such that $\pbd M=\bd{M}$ then $M$ is a combinatorial manifold. In particular, $NH$-manifolds without boundary (or pseudo boundary) are combinatorial manifolds.\end{prop}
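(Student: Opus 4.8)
The plan is to reduce the whole statement to showing that $M$ is \emph{homogeneous}: once this is known, Theorem~\ref{thm homogeneous NH-manifolds} identifies $M$ with a combinatorial $n$-manifold. The ``in particular'' clause then follows at once, because if $\bd M=\emptyset$ (equivalently, $\pbd M=\emptyset$) then trivially $\pbd M=\bd M$; and if $M$ is not connected one simply applies the result to each connected component, each being an $NH$-manifold with the same link structure and with empty (pseudo) boundary.

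So assume $M$ is connected with $\pbd M=\bd M$, and suppose for contradiction that $M$ is not homogeneous. Then $M$ has two principal simplices of different dimensions; since $M$ is connected, Lemma~\ref{prop connected q-manifold is q-pseudo}(2) joins them by a chain of pairwise adjacent principal simplices, and a pigeonhole argument along the chain produces two \emph{adjacent} principal simplices $\tau_1,\tau_2$ with $\dim\tau_1\neq\dim\tau_2$. Set $\rho=\tau_1\cap\tau_2$; by adjacency, after relabelling we may assume $\rho\prec\tau_1$, so $\rho$ is a ridge of $M$.

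The contradiction will come from examining $lk(\rho,M)$ in two ways. On one hand, $\rho$ is a face of the two principal simplices $\tau_1,\tau_2$ of different dimensions, so Lemma~\ref{lemma simplex in two principal is in boundary} gives $\rho\in\bd M$. On the other hand, write $\tau_1=\rho\ast v$ and $\tau_2=\rho\ast\mu$; both $v$ and $\mu$ are principal simplices of $lk(\rho,M)$, and $\dim\mu=\dim\tau_2-\dim\rho-1\geq 1$, since $\dim\tau_2>\dim\rho$ and $\dim\tau_2\neq\dim\rho+1=\dim\tau_1$ force $\dim\tau_2\geq\dim\rho+2$. Thus $lk(\rho,M)$ has principal simplices of dimension $0$ and of positive dimension, so it is not a single point; as $\rho$ is a ridge, Lemma~\ref{prop connected q-manifold is q-pseudo}(1) then forces $lk(\rho,M)$ to be an $NH$-sphere (of homotopy dimension $0$). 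But an $NH$-sphere of homotopy dimension $0$ is homotopy equivalent to $S^{0}$ and hence not contractible, whereas every $NH$-ball is collapsible and so contractible; therefore $lk(\rho,M)$ is not an $NH$-ball, i.e. $\rho\notin\pbd M$. This contradicts $\pbd M=\bd M$, so $M$ is homogeneous and the result follows from Theorem~\ref{thm homogeneous NH-manifolds}.

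I do not expect a genuine obstacle: the argument is essentially a synthesis of Lemmas~\ref{prop connected q-manifold is q-pseudo} and~\ref{lemma simplex in two principal is in boundary} with Theorem~\ref{thm homogeneous NH-manifolds}. The only steps needing a little care are the pigeonhole/relabelling that yields \emph{adjacent} principal simplices of different dimensions with $\rho$ an immediate face of $\tau_1$, and the elementary inequality $\dim\mu\geq 1$, which is precisely what guarantees that $lk(\rho,M)$ is not a point and hence is a true $NH$-sphere rather than an $NH$-ball.
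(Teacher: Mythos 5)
Your proof is correct and follows essentially the same route as the paper: reduce to homogeneity via Theorem~\ref{thm homogeneous NH-manifolds}, use Lemma~\ref{prop connected q-manifold is q-pseudo} to produce adjacent principal simplices of different dimensions, and use Lemma~\ref{lemma simplex in two principal is in boundary} to place $\rho=\tau_1\cap\tau_2$ in $\bd M\setminus\pbd M$. The only difference is that you spell out explicitly (via the ridge condition and non-contractibility of an $NH$-sphere of homotopy dimension $0$) why $\rho\notin\pbd M$, a point the paper leaves implicit in its citation of Lemma~\ref{lemma simplex in two principal is in boundary}.
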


\begin{proof} If $M$ is non-homogeneous, by Lemma \ref{prop connected q-manifold is q-pseudo}  there exist two adjacent principal simplices $\tau_1,\tau_2$ of different dimensions. By Lemma \ref{lemma simplex in two principal is in boundary}, $\rho=\tau_1\cap\tau_2\in\partial M-\tilde{\partial}M$.\end{proof}

The following result will be used in the next sections. It is the non-homogeneous version of the well-known fact that any $n$-homogeneous subcomplex of an $n$-combinatorial manifold with non-empty boundary has also a non-empty boundary (see \cite{Gla}).

\begin{lema}\label{lemma q-submanifold contained in q-manifold} Let $M$ be a connected $NH$-manifold with non-empty boundary and let $L\subseteq M$ be a top generated $NH$-submanifold. Then, $\partial L\neq\emptyset$.\end{lema}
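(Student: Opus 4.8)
The plan is to reduce the statement to the homogeneous case by passing to a suitable homogeneous submanifold. Since $M$ has non-empty boundary, by Proposition \ref{prop boundary homogeneous} $M$ cannot be a combinatorial manifold only if it is non-homogeneous; but regardless of homogeneity, the key point is that $L$ being top generated in $M$ forces every maximal simplex of $L$ to be maximal in $M$. First I would pick a principal simplex $\tau\in L$ of maximal dimension, say $\dim\tau=d$, and consider the pure $d$-dimensional subcomplex $L_d\subseteq L$ generated by the $d$-dimensional principal simplices of $L$; analogously let $M_d$ be generated by the $d$-dimensional principal simplices of $M$. The inclusion $L_d\subseteq M_d$ is again top generated (a $d$-simplex maximal in $L$ is maximal in $M$, hence maximal in $M_d$). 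The strategy is then to show that $M_d$ is a combinatorial $d$-manifold with non-empty boundary, apply the classical homogeneous result to conclude $\partial L_d\neq\emptyset$, and finally transfer a boundary simplex of $L_d$ back to a boundary simplex of $L$.

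The main work is establishing that $M_d$ is a combinatorial $d$-manifold (equivalently a $d$-pseudo manifold with the right local structure) and that it has non-empty boundary. For the local structure: if $\sigma$ is a $(d-1)$-simplex of $M_d$, then $\sigma$ is a ridge of $M$, so by Lemma \ref{prop connected q-manifold is q-pseudo}(1) $lk(\sigma,M)$ is a point or an $NH$-sphere of homotopy dimension $0$; in either case the $d$-dimensional principal simplices of $M$ containing $\sigma$ are in number at most two, which is exactly the pseudo manifold condition for $M_d$. That $M_d$ has non-empty boundary should follow from the hypothesis $\partial M\neq\emptyset$ together with Lemma \ref{lemma simplex in two principal is in boundary}: either $M$ is non-homogeneous, in which case there are adjacent principal simplices of different dimensions and hence (taking one of dimension $d$ if possible, or bootstrapping down from the top dimension) a $d$-simplex of $M_d$ meeting the boundary; or $M$ is homogeneous of dimension $d$, in which case $M=M_d$ is already a combinatorial $d$-manifold with $\partial M\neq\emptyset$ and we are in the classical case directly. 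The delicate bookkeeping — the part I expect to be the main obstacle — is handling the case where $M$ is non-homogeneous but its top-dimensional strata are all closed (have empty boundary as pure complexes), so that the boundary of $M$ lives entirely in lower-dimensional strata; here one must argue that $L$ still inherits a non-empty boundary, presumably by choosing $d$ to be not the maximal dimension of $M$ but the maximal dimension of $L$, and using that $L$ is top generated to locate, via Lemma \ref{lemma simplex in two principal is in boundary} applied inside $M$, a simplex of $L$ that is a face of two principal simplices of $M$ of different dimensions.

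Concretely, here is the order of steps. (1) Let $d=\dim L$ and let $L_d$, $M_d$ be the pure $d$-dimensional subcomplexes generated by the $d$-dimensional principal simplices of $L$ and $M$ respectively; check $L_d\subseteq M_d$ is top generated. (2) Using Lemma \ref{prop connected q-manifold is q-pseudo}(1), show every $(d-1)$-face in $M_d$ lies in at most two $d$-simplices of $M$, and using that the link in $M$ of every $d$-simplex is empty, conclude $M_d$ is a combinatorial $d$-manifold (on each connected component). (3) Show some component $C$ of $M_d$ meeting $L_d$ has non-empty boundary: if not, $C$ is a closed combinatorial $d$-manifold, so it contains no free faces and in particular $L_d\cap C$ — which is a top generated $d$-submanifold of the closed manifold $C$ — would have to be all of $C$ by connectedness of $C$ combined with top-generation; but then $L_d=C$ would be closed, and one argues using $\partial M\neq\emptyset$ and connectedness of $M$ that some ridge of $L$ is a face of a principal simplex of $M$ of dimension $>d$, putting it in $\partial M$ and, by restriction, in $\partial L$. (4) In the remaining case $\partial C\neq\emptyset$, apply the classical homogeneous result to the pair $L_d\cap C\subseteq C$ to get a $(d-1)$-simplex $\sigma\in\partial(L_d\cap C)\subseteq\partial L$, where the last inclusion holds because $lk(\sigma,L)\supseteq lk(\sigma,L_d\cap C)$ and, by maximality of $d$ in $L$, these links coincide and are combinatorial $0$-balls, i.e. points; hence $\sigma\in\pbd L\subseteq\partial L$. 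This completes the proof, the only subtle point being step (3)'s dichotomy, which is where the genuinely non-homogeneous phenomenon is confined.
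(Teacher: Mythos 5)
There is a genuine gap, and it is concentrated in your step (2): the claim that $M_d$ (or each of its connected components) is a combinatorial $d$-manifold is false, and the ridge condition you verify is far too weak to imply it. Here is a counterexample. Let $K$ be the $NH$-ball of dimension $2$ whose maximal simplices are a triangle $xyz$ and two edges $xw$, $yu$ (one checks $lk(x,K)$ and $lk(y,K)$ are $NH$-spheres of homotopy dimension $0$, the other links are balls, and $K$ is collapsible). By Lemma \ref{lemma simplex join q-ball q-sphere}, $M=v\ast K$ is an $NH$-ball of dimension $3$; it is connected and $\partial M\neq\emptyset$ (e.g.\ $w\in\pbd M$). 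Its principal simplices are $vxyz$, $vxw$, $vyu$, so for $L=\overline{vxw}$ (a top generated combinatorial $2$-ball) you get $d=2$ and $M_2=\overline{vxw}+\overline{vyu}$: two triangles meeting only at the vertex $v$. This $M_2$ is connected, satisfies your ridge condition (every $1$-simplex lies in at most two triangles), yet $lk(v,M_2)$ is a disjoint union of two edges, so $M_2$ is not a combinatorial $2$-manifold — it is not even a pseudo manifold, since the two triangles are not strongly connected. Consequently the classical homogeneous result you invoke in step (4), which needs the ambient complex to be a combinatorial manifold with boundary, simply does not apply to $L_d\subseteq M_d$. The point is that the pure $d$-dimensional stratum of an $NH$-manifold carries no usable manifold structure; control over ridges says nothing about vertex links.

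Two further remarks. In step (3), the inference that a simplex of $L$ lying in $\partial M$ is ``by restriction'' in $\partial L$ is not valid: $\partial L$ is defined via links taken in $L$, not in $M$, so membership in $\partial M$ does not transfer; and if you instead try to rule out the closed case via Corollary \ref{coro boundaryless in nhmanifold} you run into circularity, since that corollary is deduced from the very lemma you are proving. The paper's proof avoids stratifying altogether: it inducts on $\dim M$, uses Lemma \ref{prop connected q-manifold is q-pseudo}(2) to produce adjacent principal simplices $\sigma\in L$ and $\tau\in M-L$, and analyzes $lk(\rho,M)$ and $lk(\rho,L)$ for $\rho=\sigma\cap\tau$, applying the inductive hypothesis to the pair $lk(\rho,L)\subset B$ inside the decomposition $lk(\rho,M)=B+\ast$. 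That is the mechanism you would need to substitute for your steps (2)--(4).
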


\begin{proof} We may assume $L\neq M$. We proceed by induction on $n=\dim{M}$. The $1$-dimensional case is clear. Let $n\geq 2$. Take adjacent principal simplices $\sigma\in L$ and $\tau\in M-L$ and let $\rho=\sigma\cap\tau$. If $\dim{\sigma}=\dim{\tau}$ then $lk(\rho,M)=S^0$ and therefore, $\rho\in\bd{L}$. If $\dim{\sigma}\neq\dim{\tau}$ then $lk(\rho,M)=B+\ast$ is a non-homogeneous $NH$-sphere of homotopy dimension $0$. We analyze both cases: $\rho\prec\sigma$ and $\rho\prec\tau$. If $\rho\prec\sigma$ then $lk(\rho,L)$ is either a $0$-ball, which implies $\rho\in\pbd{L}$, or a non-homogeneous $NH$-sphere of homotopy dimension $0$. In this case, by Proposition \ref{prop boundary homogeneous} $\pbd{lk(\rho,L)}\neq\emptyset$. If $\rho\prec\tau$ then $\pbd{lk(\rho,L)}\neq\emptyset$ by induction applied to $lk(\rho,L)\subset B$. In any case, if $\eta\in\pbd{lk(\rho,L)}$ then $\eta\ast\rho\in\pbd{L}$. \end{proof}

\begin{coro}\label{coro boundaryless in nhmanifold} If $M$ is a connected $NH$-manifold of dimension $n\geq 1$ containing a top generated combinatorial manifold $L$ without boundary then $M=L$.\end{coro}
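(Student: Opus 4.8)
The plan is to argue by contradiction, assuming $L\subsetneq M$, and to split into two cases according to whether $M$ has boundary. First I record two facts that hold in both cases. Since $L$ is a combinatorial manifold without boundary it is, by Theorem \ref{thm homogeneous NH-manifolds}, an $NH$-manifold, and its $NH$-boundary $\bd L$ is empty because for combinatorial manifolds the pseudo boundary coincides with the classical boundary; moreover $L$ is a top generated $NH$-submanifold of $M$ by hypothesis.

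The easy case is $\bd M\neq\emptyset$: then Lemma \ref{lemma q-submanifold contained in q-manifold} applies verbatim to the top generated $NH$-submanifold $L\subseteq M$ and gives $\bd L\neq\emptyset$, contradicting the paragraph above. Hence we may assume $\bd M=\emptyset$, i.e.\ $\pbd M=\emptyset$. Then Proposition \ref{prop boundary homogeneous} forces $M$ to be a connected boundaryless combinatorial manifold; since $\dim M=n$, it is a combinatorial $n$-manifold, and in particular $M$ is homogeneous of dimension $n$. Because $L$ is top generated in $M$, every maximal simplex of $L$ has dimension $n$, so $L$ is an $n$-homogeneous complex — that is, a combinatorial $n$-manifold without boundary sitting as an $n$-homogeneous subcomplex of the closed connected combinatorial $n$-manifold $M$.

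Now I would exploit the classical fact (recalled in Section 2) that a connected combinatorial $n$-manifold is an $n$-pseudo manifold. Since $M$ is $n$-homogeneous and $L$ is a proper subcomplex, $M$ must contain an $n$-simplex not in $L$ (otherwise every simplex of $M$, being a face of some $n$-simplex, would lie in $L$, giving $M\subseteq L$). Choose $n$-simplices $\sigma\in L$ and $\tau\in M-L$; by strong connectedness of the pseudo manifold $M$ there is a chain of $n$-simplices $\sigma=\gamma_0,\gamma_1,\dots,\gamma_k=\tau$ with $\gamma_i\cap\gamma_{i+1}$ of dimension $n-1$ for all $i$. Letting $j$ be the largest index with $\gamma_j\in L$, we get $n$-simplices $\gamma_j\in L$, $\gamma_{j+1}\in M-L$ whose intersection $\rho$ is an $(n-1)$-simplex. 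By regularity (Proposition \ref{prop regularity of NH-manifolds}, or the classical version), $lk(\rho,M)$ is a $0$-ball or a $0$-sphere; since $\bd M=\emptyset$ it cannot be a $0$-ball, so it consists of exactly two points, which means $\gamma_j$ and $\gamma_{j+1}$ are the only $n$-simplices of $M$ containing $\rho$. Therefore $\gamma_j$ is the only $n$-simplex of $L$ containing $\rho$, so $lk(\rho,L)$ is a single point, a $0$-ball; hence $\rho\in\pbd L$ and $\bd L\neq\emptyset$, contradicting the first paragraph. This contradiction shows $M=L$.

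The only delicate point I anticipate is the bookkeeping in the closed case: using "top generated'' to upgrade $L$ to an $n$-homogeneous complex and then extracting the "crossing'' $(n-1)$-simplex $\rho$ from the gallery connecting an $n$-simplex of $L$ to one outside $L$. Once $\rho$ is in hand, the two link computations are immediate from regularity together with the absence of boundary in $M$, and the case $\bd M\neq\emptyset$ is entirely absorbed by Lemma \ref{lemma q-submanifold contained in q-manifold}.
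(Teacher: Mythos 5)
Your proof is correct and is essentially the intended route: the paper states this corollary without proof as a consequence of Lemma \ref{lemma q-submanifold contained in q-manifold}, and your first case is exactly that application, while your second case correctly notices that the lemma's hypothesis $\bd M\neq\emptyset$ formally excludes closed $M$ and supplies the missing argument. The gallery/crossing-ridge argument you give there (reduce to a closed combinatorial $n$-manifold via Proposition \ref{prop boundary homogeneous}, find adjacent $n$-simplices $\gamma_j\in L$, $\gamma_{j+1}\notin L$, and conclude $lk(\rho,L)$ is a point) is precisely the $\dim\sigma=\dim\tau$ branch inside the lemma's own proof, restated in classical terms, so no genuinely new ingredient is involved.
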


Note that if $S=B+\ast$ is a non-homogeneous $NH$-sphere of homotopy dimension $0$ and $M$ is a non-trivial top generated combinatorial $n$-manifold contained in $S$, then $M\subseteq B$. This implies that $\bd{M}\neq\emptyset$ by Corollary \ref{coro boundaryless in nhmanifold}. We state this fact in the following

\begin{coro}\label{coro nhbouquet cannot sphere} A non-homogeneous $NH$-sphere $S=B+\ast$ of homotopy dimension $0$ cannot contain a non-trivial top generated combinatorial manifold without boundary.\end{coro}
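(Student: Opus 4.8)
The plan is to deduce Corollary \ref{coro nhbouquet cannot sphere} directly from Corollary \ref{coro boundaryless in nhmanifold} together with the specific structure of a homotopy-dimension-$0$ $NH$-sphere. Suppose, for contradiction, that $S = B + \ast$ is a non-homogeneous $NH$-sphere of homotopy dimension $0$ and that $M \subseteq S$ is a non-trivial top generated combinatorial $n$-manifold without boundary. The first step is to locate $M$ inside the decomposition: since $S$ is the disjoint union of the $NH$-ball $B$ and the single vertex $\ast$, and $M$ is connected (any combinatorial manifold without boundary that is non-trivial is a disjoint union of closed manifolds, but the cleanest route is to note that each connected component of $M$ is again a top generated combinatorial manifold without boundary), each component of $M$ lies entirely in $B$ or equals $\{\ast\}$; the latter is impossible since a single vertex is a $0$-sphere, not a $0$-manifold without boundary in the relevant non-trivial sense, so in fact $M \subseteq B$.

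Once $M \subseteq B$ is established, the second step is to check that $M$ is top generated \emph{in $B$}, not merely in $S$: a principal simplex of $M$ is principal in $S = B + \ast$, and since it is not the isolated vertex $\ast$, it must be a principal simplex of $B$. Hence $M$ is a non-trivial top generated combinatorial manifold without boundary sitting inside the connected $NH$-manifold $B$ of dimension $n \geq 1$. Applying Corollary \ref{coro boundaryless in nhmanifold} to $B$ and $L = M$ forces $M = B$. But then $B$ would be a combinatorial $n$-manifold, hence homogeneous of dimension $n$, and $S = B + \ast$ would have a principal simplex (the vertex $\ast$) of dimension $0 < n$, contradicting... wait — one must be slightly careful here, since $S$ itself is non-homogeneous by assumption, so this is not yet a contradiction. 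The actual contradiction is cleaner: $M = B$ has empty boundary, but $B$ is an $NH$-ball, and an $NH$-ball has non-empty pseudo boundary and boundary whenever its dimension is $\geq 1$ (a collapsible manifold collapses, so it has a free face, hence a ridge whose link is a $0$-ball, so $\pbd B \neq \emptyset$). For $n \geq 1$ this contradicts $\bd M = \bd B = \emptyset$. For the case $\dim B = 0$, note $B$ would be a single vertex and $S = B + \ast$ a $0$-sphere, which is homogeneous, contradicting the non-homogeneity hypothesis on $S$.

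The main obstacle, I expect, is the bookkeeping around connectivity and the degenerate low-dimensional cases rather than any substantial topology: one must pin down that "non-trivial top generated combinatorial manifold without boundary" behaves well under passing to connected components, that such components cannot be the isolated vertex $\ast$, and that the phrase "$M \subseteq B$" can be upgraded to "$M$ top generated in $B$". All of these are essentially immediate from the definitions, and the remark preceding the corollary in the excerpt already sketches exactly this reasoning, so the proof should be only two or three lines: reduce to $M \subseteq B$, invoke Corollary \ref{coro boundaryless in nhmanifold} to get $M = B$, and observe that an $NH$-ball has non-empty boundary, contradicting $\bd M = \emptyset$.

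\begin{proof} Suppose, by contradiction, that $M \subseteq S = B + \ast$ is a non-trivial top generated combinatorial $n$-manifold without boundary. Each connected component of $M$ is again a non-trivial top generated combinatorial manifold without boundary, and since $S$ is the disjoint union of $B$ and the isolated vertex $\ast$, and a single vertex is not a manifold of positive dimension (nor is it top generated in $S$ as a component of $M$ unless $M$ is trivial), every component of $M$ lies in $B$; hence $M \subseteq B$. Moreover $M$ is top generated in $B$: any principal simplex of $M$ is principal in $S$, and being distinct from $\ast$, it is a principal simplex of $B$. Thus $M$ is a non-trivial top generated combinatorial manifold without boundary contained in the connected $NH$-manifold $B$. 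By Corollary \ref{coro boundaryless in nhmanifold}, $M = B$. But $B$ is an $NH$-ball, so it is a collapsible $NH$-manifold; if $\dim B \geq 1$ then $B$ collapses and in particular has a free face, yielding a ridge whose link is a $0$-ball, so $\pbd B \neq \emptyset$ and hence $\bd B = \bd M \neq \emptyset$, a contradiction. If $\dim B = 0$ then $B = \ast'$ is a single vertex and $S = B + \ast$ is a $0$-sphere, which is homogeneous, contradicting the non-homogeneity of $S$. \end{proof}
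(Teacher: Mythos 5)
Your proof is correct and takes essentially the same route as the paper's (the paper's entire argument is the two-sentence remark preceding the corollary: reduce to $M\subseteq B$ and invoke Corollary \ref{coro boundaryless in nhmanifold}); you merely spell out the component bookkeeping and the degenerate $0$-dimensional case. One small repair to your final step: under the paper's conventions \emph{collapsible} means some \emph{subdivision} collapses to a point, so $B$ itself need not have a free face; to see $\bd B\neq\emptyset$ once $M=B$, argue instead that $B$ is then a homogeneous $NH$-ball, hence a combinatorial ball by Theorem \ref{thm homogeneous NH-manifolds}, whose boundary is non-empty in dimension $\geq 1$ (equivalently, a closed manifold of positive dimension cannot be contractible, while $B$ is).
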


 In contrast to the classical situation, the boundary of an $NH$-manifold is not in general an $NH$-manifold (see Figure \ref{fig:ejemplos_borde_y_pseudo}). However, similarly as in the homogeneous setting, if $M$ is an $NH$-manifold and $\eta\in M$ is any simplex, then $lk(\eta,\bd{M})=\bd{lk(\eta,M)}$. Moreover, it is well-known that the boundary of a combinatorial manifold has no boundary. The following result generalizes this fact to the non-homogeneous setting.

\begin{prop}\label{prop borde del borde} The boundary of an $NH$-manifold $M$ has no collapsible simplices.\end{prop}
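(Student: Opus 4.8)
I want to show that $\partial M$ has no collapsible simplices, i.e. no simplex of $\partial M$ has a free face in $\partial M$. Suppose for contradiction that $\tau \in \partial M$ is collapsible in $\partial M$ with free face $\rho \prec \tau$, so $\tau$ is a maximal simplex of $\partial M$ and $\rho$ is a ridge of $\partial M$ contained in no other simplex of $\partial M$. The strategy is to reduce everything to a statement about the link of $\rho$ in $M$: since $\rho \in \partial M$ (being a face of $\tau \in \partial M$), $lk(\rho, M)$ is an $NH$-ball or an $NH$-sphere by regularity (Proposition \ref{prop regularity of NH-manifolds}), and since $\rho$ lies in the interior of the simplex $\tau$ — actually more care is needed — I will instead work with a vertex of $\tau$ not in $\rho$, or directly analyze $lk(\rho, M)$.

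First I would record the key link identity: $\partial(lk(\eta, M)) = lk(\eta, \partial M)$ for every simplex $\eta \in M$ (this compatibility is stated in the paragraph preceding the proposition). Applying it with $\eta = \rho$ gives $\partial(lk(\rho, M)) = lk(\rho, \partial M)$. Now $\tau = \rho \ast \nu$ for some vertex (or simplex) $\nu$ disjoint from $\rho$, and $\tau \in \partial M$ means $\nu \in lk(\rho, \partial M) = \partial(lk(\rho, M))$; moreover $\nu$ is a maximal simplex of $lk(\rho, \partial M)$ and is contained in no other simplex of $lk(\rho, \partial M)$ except... precisely, the free-face condition on $\rho \prec \tau$ in $\partial M$ translates into: $\nu$ is a principal simplex of $lk(\rho,\partial M)$ and the empty face (or rather, the relevant face) situation. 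I need to extract that $lk(\rho, \partial M) = \partial(lk(\rho, M))$ has a collapsible simplex, which would contradict the same proposition applied in lower dimension — so the proof is by induction on $\dim M$, with the base case $\dim M \leq 1$ being immediate since a $0$- or $1$-dimensional $\partial M$ is a disjoint union of points (no free faces possible).

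To run the induction I must handle the top-dimensional simplex $\tau$ of $\partial M$ carefully, since shrinking to $lk(\rho, M)$ only helps when $\rho \neq \emptyset$; when $\tau$ is itself a vertex, $\rho = \emptyset$ and the "free face" is $\emptyset \prec \tau$, meaning $\tau$ is an isolated vertex of $\partial M$, which I can exclude directly because an isolated vertex $v$ of $\partial M$ would have $lk(v,M)$ an $NH$-ball while no other simplex of $\partial M$ touches $v$ — but then using Lemma \ref{lemma simplex in two principal is in boundary} and the pseudo-manifold structure (Lemma \ref{prop connected q-manifold is q-pseudo}) one sees $lk(v,M)$, being an $NH$-ball of dimension $\geq 1$ (if $\dim M \geq 2$), has nonempty pseudo boundary, forcing more of $\partial M$ through $v$; for $\dim M = 1$ it is handled by the base case. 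For the generic inductive step with $\rho \neq \emptyset$: pick $v \in \rho$; then $\tau$ is a maximal simplex of $st(v,M)$, $lk(v,M)$ is an $NH$-ball (as $v \in \partial M$), and $\partial(lk(v,M)) = lk(v,\partial M)$ contains $\tau \setminus v = \rho' \ast \nu$ with $\rho' = \rho \setminus v$ as a maximal simplex, with $\rho'$ free in it — so $lk(v,M)$ is an $NH$-manifold of strictly smaller dimension whose boundary has a collapsible simplex, contradicting the inductive hypothesis.

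**Main obstacle.** The delicate point is the bookkeeping of the free-face condition under passage to links: I must verify that "$\rho$ is a free face of $\tau$ in $\partial M$" really does descend to "$\rho \setminus v$ is a free face of $\tau \setminus v$ in $lk(v, \partial M)$" — i.e. that no simplex of $lk(v,\partial M)$ other than $\tau \setminus v$ contains $\rho \setminus v$. This is exactly the statement that no simplex of $\partial M$ other than $\tau$ (and its faces) contains $\rho$ and $v$, which follows from $\rho$ being free in $\tau$ within $\partial M$, but one must also check $\tau \setminus v$ is still principal in $lk(v,\partial M)$ — a simplex $\xi \ast (\tau\setminus v) \supsetneq \tau \setminus v$ in $lk(v,\partial M)$ would give $v \ast \xi \ast (\tau \setminus v) \supsetneq \tau$ in $\partial M$, contradicting maximality of $\tau$. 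The edge case $\rho = \emptyset$ (isolated vertex) is the other place needing separate, non-inductive treatment, using the regularity of links and Corollary \ref{coro boundaryless in nhmanifold} or the pseudo-manifold lemmas to rule it out. Once these local translations are pinned down the induction closes cleanly.
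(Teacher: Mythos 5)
There is a genuine gap, and it sits exactly where the real content of the statement lies. Your induction reduces ``$\tau$ is collapsible in $\partial M$ with free face $\rho$'' to a collapsibility statement in $lk(v,\partial M)=\partial\, lk(v,M)$ for a vertex $v\in\rho$, replacing $\rho$ by $\rho\setminus v$. This only produces a contradiction with the inductive hypothesis when $\rho\setminus v$ is a nonempty simplex, i.e.\ when $\dim\rho\geq 1$. In the terminal case $\dim\rho=0$ (so $\rho=v$ and $\tau$ is an edge $v\ast w$ of $\partial M$), the freeness of $v$ gives $lk(v,\partial M)=\{w\}$, hence $\partial\, lk(v,M)$ is a \emph{single point} --- and a single point has no collapsible simplices, so the inductive hypothesis is silent. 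What you actually need at this stage is the stronger fact that the boundary of an $NH$-ball or $NH$-sphere of dimension $\geq 1$ can never be a singleton; this does not follow from ``no collapsible simplices'' and is precisely the crux of the paper's (much shorter) argument: take the link of the free face $\rho$ itself, observe $lk(\rho,\partial M)=\partial\, lk(\rho,M)$ would be a point, and rule this out via Proposition \ref{prop boundary homogeneous} (a singleton boundary forces $\pbd=\bd$, hence a combinatorial ball or sphere, whose boundary is a sphere or empty, never a point; the disconnected $NH$-sphere $B+\ast$ reduces to $B$). Your edge-case analysis addresses only $\rho=\emptyset$ (an isolated vertex of $\partial M$), which under the paper's definition of free face does not even arise, and the fact you extract there (nonempty pseudo boundary of the link) is too weak: it excludes an empty link-boundary, not a singleton one.

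Two smaller points. First, ``$lk(v,M)$ is an $NH$-ball (as $v\in\partial M$)'' is false: $v\in\partial M=\overline{\pbd M}$ does not imply $v\in\pbd M$, so the link may be an $NH$-sphere; this is harmless for your induction (which only needs $lk(v,M)$ to be an $NH$-manifold of smaller dimension) but should be corrected. Second, once you add the singleton-boundary lemma, the whole induction becomes unnecessary: applying $lk(\rho,-)$ to the free face directly, as the paper does, finishes in one step.
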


\begin{proof} Let $\sigma$ be a ridge in $\bd{M}$. Since $lk(\sigma,\partial M)=\partial lk(\sigma,M)$,  it suffices to show that the boundary of any $NH$-ball or  $NH$-sphere cannot be a singleton. This is clear for classical balls and spheres and, by Proposition \ref{prop boundary homogeneous}, the same is true for $NH$-balls and $NH$-spheres.\end{proof}

A simplex $\sigma\in M$ will be called \emph{internal} if $lk(\sigma,M)$ is an $NH$-sphere, i.e. if $\sigma\notin\pbd M$. We denote by $\overset{\circ}{M}$ the relative interior of $M$, which is the set of its internal simplices.

\begin{lema}\label{lemma border of q-spheres} Let $S$ be an $NH$-sphere with decomposition $B+L$. Then, every $\sigma\in L$ is internal in $S$. In particular, $\pbd S=\pbd B-L$.\end{lema}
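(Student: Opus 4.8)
The claim has two parts: first, that every $\sigma\in L$ is internal in $S=B+L$ (i.e. $lk(\sigma,S)$ is an $NH$-sphere); and second, the set-theoretic identity $\pbd S=\pbd B-L$. The plan is to prove the first part by downward induction on $\dim\sigma$ (equivalently, by induction on $n-\dim\sigma$ where $n=\dim S$), using that $B\cap L=\bd L$ and the regularity of links in $NH$-manifolds (Proposition~\ref{prop regularity of NH-manifolds}).

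First I would treat the top-dimensional simplices of $L$. If $\sigma\in L$ is principal in $L$, then since $L$ is top generated in $S$, $\sigma$ is principal in $S$, so $lk(\sigma,S)=\emptyset=\bd{\Delta}^0$, which is an $NH$-sphere of dimension $-1$; hence $\sigma$ is internal. For the inductive step, let $\sigma\in L$ be arbitrary and assume every simplex of $L$ of strictly larger dimension is internal. I split on whether $\sigma\in\bd L$ or $\sigma\in L-\bd L$. If $\sigma\in L-\bd L$, then $lk(\sigma,S)=lk(\sigma,L)$ since no simplex of $B$ outside $\bd L=B\cap L$ can be joinable with $\sigma$; and $lk(\sigma,L)$ is a combinatorial sphere because $L$ is a combinatorial ball and $\sigma$ is an interior simplex, so it is in particular an $NH$-sphere. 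If $\sigma\in\bd L=B\cap L$, then $lk(\sigma,S)=lk(\sigma,B)+lk(\sigma,L)$; here $lk(\sigma,L)$ is a combinatorial ball (as $\sigma\in\bd L$), and $lk(\sigma,B)$ is an $NH$-ball or $NH$-sphere by regularity. I would argue that their intersection is $lk(\sigma,B\cap L)=lk(\sigma,\bd L)=\bd{lk(\sigma,L)}$, and that $lk(\sigma,B)$ is top generated in $lk(\sigma,S)$ (using that principal simplices of $B$ through $\sigma$ correspond to principal simplices of $S$ through $\sigma$, since $B$ is top generated in $S$); likewise $lk(\sigma,L)$ is top generated. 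If $lk(\sigma,B)$ is an $NH$-ball, this exhibits $lk(\sigma,S)$ directly as a decomposition of an $NH$-sphere. If $lk(\sigma,B)$ is an $NH$-sphere, then I would invoke the inductive hypothesis applied to the $NH$-sphere $lk(\sigma,B)$: its ball summand $B'$ together with $lk(\sigma,L)$ glues to give $lk(\sigma,S)=B'+(L'\text{-part})$; more cleanly, one observes that an $NH$-sphere with a combinatorial ball glued along its boundary ball is again an $NH$-sphere by the same decomposition data, possibly after re-choosing the decomposition of $lk(\sigma,B)$ so that $lk(\sigma,L)$ abuts the combinatorial-ball piece. I expect the bookkeeping in this sub-case — verifying top-generatedness of the pieces and that the intersections are boundaries — to be the main obstacle, though it is the same kind of argument already used in the proof of Theorem~\ref{thm join of q-balls and q-spheres}.

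Once every $\sigma\in L$ is known to be internal, the identity $\pbd S=\pbd B-L$ follows quickly. By definition $\pbd S$ is the set of simplices of $S$ with $NH$-ball links. No simplex of $L$ lies in $\pbd S$, by what was just proved, so $\pbd S\subseteq \mathrm{simplices of }B\setminus L$ wait — rather, $\pbd S$ is contained in the set of simplices not in $L$. For $\sigma\in S$ with $\sigma\notin L$: every principal simplex of $S$ through $\sigma$ is a principal simplex of $B$ through $\sigma$ (since $L$'s principal simplices lie in $L$ and $\sigma\notin L$), which forces $lk(\sigma,S)=lk(\sigma,B)$; hence such a $\sigma$ is in $\pbd S$ iff $lk(\sigma,B)$ is an $NH$-ball iff $\sigma\in\pbd B$. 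Combining, $\pbd S=\pbd B\setminus L$, as claimed. I would close by noting this also shows $\pbd S\subseteq B-L$, consistent with $L$ being interior.
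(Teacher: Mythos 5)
Your overall strategy (treat $\sigma\in L-\bd{L}$ and $\sigma\in\bd{L}$ separately, compute $lk(\sigma,S)=lk(\sigma,B)+lk(\sigma,L)$ with intersection $\bd{lk(\sigma,L)}$, and deduce the identity $\pbd S=\pbd B-L$ from the fact that $lk(\sigma,S)=lk(\sigma,B)$ for $\sigma\notin L$) is sound and close in spirit to what the paper does, except that the paper obtains the statement as the index-one case of Lemma~\ref{lemma border of q-bouquets}. But there is a genuine error in your handling of the sub-case $\sigma\in\bd{L}$ with $lk(\sigma,B)$ an $NH$-sphere. You assert that ``an $NH$-sphere with a combinatorial ball glued along its entire boundary is again an $NH$-sphere, possibly after re-choosing the decomposition.'' That is false: this configuration is exactly the definition of an $NH$-bouquet of index $2$, and by Proposition~\ref{corollary homotopy type of q-bouquets} such a complex is homotopy equivalent to a wedge of \emph{two} spheres, hence is neither contractible nor a homotopy sphere; no re-choice of decomposition can make it an $NH$-sphere. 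So the argument as written proves something false in this branch.

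The correct resolution is that this sub-case is vacuous, and you should rule it out by contradiction rather than absorb it. If $\sigma\in\bd{L}$ had $lk(\sigma,B)$ an $NH$-sphere, then (using the top-generatedness you already verified) $lk(\sigma,S)=lk(\sigma,B)+lk(\sigma,L)$ would be an $NH$-bouquet of index $2$; but Proposition~\ref{prop regularity of NH-manifolds} forces $lk(\sigma,S)$ to be an $NH$-ball or an $NH$-sphere, and the homotopy-type computation of Proposition~\ref{corollary homotopy type of q-bouquets} shows an index-$2$ bouquet is neither. Hence $lk(\sigma,B)$ is an $NH$-ball (note also that it is nonempty: $\sigma$ cannot be principal in $B$, since $B$ is top generated in $S$ while $\sigma$ is a proper face of a principal simplex of $L$), and your first branch applies, exhibiting $lk(\sigma,S)$ as an $NH$-sphere with decomposition $lk(\sigma,B)+lk(\sigma,L)$. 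With this repair the rest of your argument, including the deduction of $\pbd S=\pbd B-L$, goes through.
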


\begin{proof} This is a particular case of Lemma \ref{lemma border of q-bouquets}.\end{proof}

\begin{defi} Let $M$ be an $NH$-manifold. The \emph{anomaly complex} of $M$ is the subcomplex
$$A(M)=\{\sigma\in M\mbox{ $:$ }lk(\sigma,M)\mbox{ is not homogeneous}\}.$$\end{defi}

The fact that $A(M)$ is a simplicial complex follows  from the equation $lk(\sigma\ast\eta,M)=lk(\sigma,lk(\eta,M))$. Figure \ref{fig:ejemplos_anomaly} shows examples of anomaly complexes.

\begin{figure}[h]
\centering
\includegraphics[width=6.00in,height=1.00in]{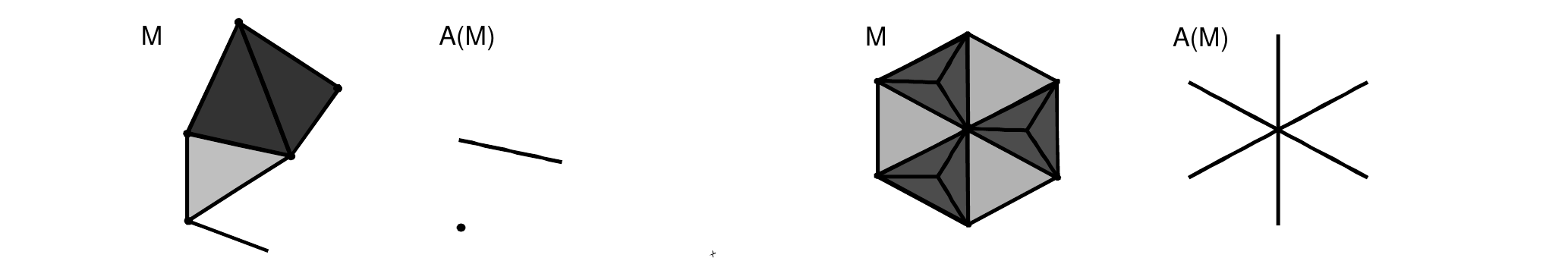}
\caption{Anomaly complex.}
\label{fig:ejemplos_anomaly}
\end{figure}

\begin{prop} For any $NH$-manifold $M$, $\bd{M}=\pbd M + A(M)$.\end{prop}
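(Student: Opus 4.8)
The plan is to prove the two inclusions $\bd M\subseteq\pbd M+A(M)$ and $\pbd M+A(M)\subseteq\bd M$ separately, working simplex by simplex and exploiting the local characterizations already established in this section. Recall that $\pbd M$ consists of the simplices with $NH$-ball links, $A(M)$ consists of the simplices with non-homogeneous links, and $\bd M=\overline{\pbd M}$.

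For the inclusion $\pbd M+A(M)\subseteq\bd M$, first note $\pbd M\subseteq\bd M$ by definition. So it remains to show $A(M)\subseteq\bd M$. Let $\sigma\in A(M)$, so $lk(\sigma,M)$ is a non-homogeneous $NH$-ball or $NH$-sphere. If it is an $NH$-ball then $\sigma\in\pbd M\subseteq\bd M$ directly. If it is a non-homogeneous $NH$-sphere, then by Proposition~\ref{prop boundary homogeneous} (applied to each connected component, or directly since a non-homogeneous $NH$-manifold cannot have $\pbd{} = \bd{}$) its pseudo boundary $\pbd{lk(\sigma,M)}$ is non-empty; pick $\nu\in\pbd{lk(\sigma,M)}$. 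Then $\nu\ast\sigma\in M$ and $lk(\nu\ast\sigma,M)=lk(\nu,lk(\sigma,M))$ is an $NH$-ball, so $\nu\ast\sigma\in\pbd M$, whence $\sigma<\nu\ast\sigma$ lies in $\bd M=\overline{\pbd M}$. This settles one direction.

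For the reverse inclusion $\bd M\subseteq\pbd M+A(M)$, take a simplex $\sigma\in\bd M$. Then $\sigma$ is a face of some $\tau\in\pbd M$, i.e. $lk(\tau,M)$ is an $NH$-ball. Write $\tau=\sigma\ast\eta$ (with $\eta$ possibly empty, in which case $\sigma\in\pbd M$ and we are done). Using $lk(\tau,M)=lk(\eta,lk(\sigma,M))$, we know $lk(\sigma,M)$ contains a simplex ($\eta$) whose link in $lk(\sigma,M)$ is an $NH$-ball, so $\pbd{lk(\sigma,M)}\neq\emptyset$. Now consider the two possibilities for $lk(\sigma,M)$. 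If it is an $NH$-ball, then $\sigma\in\pbd M$. If it is an $NH$-sphere: a combinatorial sphere has empty pseudo boundary, so $lk(\sigma,M)$ cannot be a combinatorial sphere; hence it is a non-homogeneous $NH$-sphere, i.e. $\sigma\in A(M)$. In either case $\sigma\in\pbd M+A(M)$, completing the proof.

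The main point to get right is the second direction: one must argue that having a nonempty pseudo boundary forces the link to be either an $NH$-ball or a genuinely non-homogeneous $NH$-sphere, never a combinatorial sphere — this is exactly where the fact that combinatorial spheres have empty pseudo boundary enters, together with the dichotomy for links supplied by the definition of $NH$-manifold (Proposition~\ref{prop regularity of NH-manifolds}). The first direction is essentially the content of the argument already used in Lemma~\ref{lemma simplex in two principal is in boundary}, so no new difficulty arises there.
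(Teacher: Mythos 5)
Your proof is correct and follows essentially the same route as the paper: the inclusion $A(M)\subseteq\bd M$ is obtained by producing a simplex $\nu\ast\sigma\in\pbd M$ from the non-empty pseudo boundary of the non-homogeneous link (which is precisely the argument of Lemma~\ref{lemma simplex in two principal is in boundary}, as you note), and the reverse inclusion is the paper's same dichotomy: a face $\tau=\sigma\ast\eta\in\pbd M$ forces $\pbd{lk(\sigma,M)}\neq\emptyset$, so $lk(\sigma,M)$ cannot be a combinatorial sphere and must be an $NH$-ball or a non-homogeneous $NH$-sphere. The only cosmetic difference is that you phrase the second direction constructively where the paper argues by contradiction.
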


\begin{proof} If $\sigma\in A(M)$ then $\sigma$ is face of two principal simplices of $M$ of different dimensions. Therefore $\sigma\in\bd{M}$ by Lemma \ref{lemma simplex in two principal is in boundary}. For the other inclusion, let $\sigma\in\bd{M}-\pbd M$. Then $lk(\sigma,M)$ is an $NH$-sphere and $\sigma<\tau$ with $\tau\in\pbd M$. Write $\tau=\sigma\ast\eta$, thus $lk(\tau,M)=lk(\eta,lk(\sigma,M))$. If $\sigma\notin A(M)$ then $lk(\sigma,M)$ is a combinatorial sphere and so is $lk(\tau,M)$, contradicting the fact that $\tau\in\pbd M$.\end{proof}

\section{NH-bouquets and shellability}

Recall that, similarly as in the homogeneous setting,  an $NH$-sphere is obtained by ``gluing" a combinatorial ball to an $NH$-ball along its entire boundary. In the homogeneous case one can no longer glue another  ball to a sphere for it would produce a complex which is not a manifold (not even a pseudo manifold). The existence of boundary in non-homogeneous $NH$-spheres allows us to glue balls and obtain again an $NH$-manifold. This is the idea behind the notion of $NH$-bouquet. This concept arises naturally when studying shellability of non-homogeneous manifolds.

\begin{defi} We define an \emph{$NH$-bouquet} $G$ of dimension $n$ and \emph{index} $k$ by induction on $k$.\begin{itemize}\item If $k=0$ then $G$ is an $NH$-ball of dimension $n$.\item If $k\geq 1$ then $G$ is an $NH$-manifold of dimension $n$ such that there exist a top generated $NH$-bouquet $S$ of dimension $n$ and index $k-1$ and a top generated combinatorial ball $L$, such that $G=S+ L$ and $S\cap L=\bd{L}$.\end{itemize}\end{defi}

 We will show below that the index $k$ is well defined since an $NH$-bouquet of index $k$ is homotopy equivalent to a bouquet of $k$ spheres (of different dimensions). In fact, the index is the number of balls that are glued to an $NH$-ball.  A \emph{decomposition}  $G=B+L_1+\cdots+L_k$ of an $NH$-bouquet $G$ consists of top generated subcomplexes of $G$ such that $B$ is an $NH$-ball, $L_i$ is a combinatorial ball for each $i=1,\ldots,k$ and $(B+\cdots+L_i)\cap L_{i+1}=\bd L_{i+1}$. Of course, a decomposition is not unique.

\begin{ejs} Figure \ref{fig:ejemplos_nh_bouquets} shows some examples of $NH$-bouquets of low dimensions.\end{ejs}

\begin{figure}[h]
\centering
\includegraphics[width=6.00in,height=1.00in]{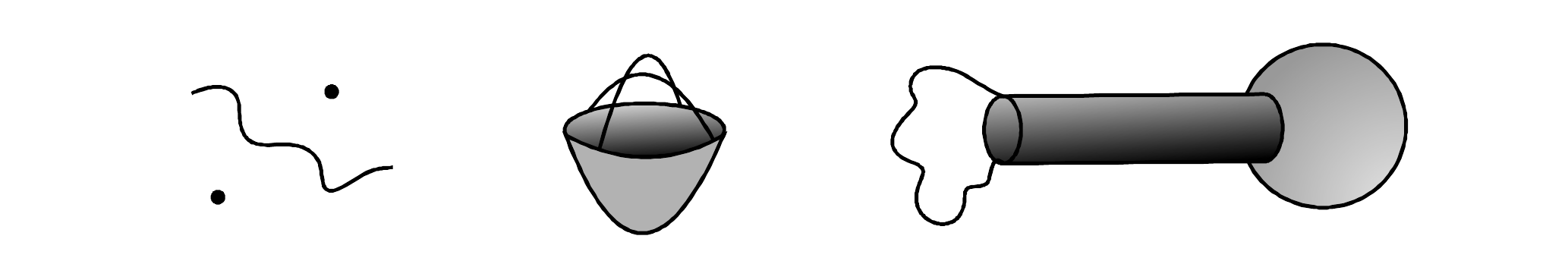}
\caption{$NH$-bouquets.}
\label{fig:ejemplos_nh_bouquets}
\end{figure}

\begin{obs} Clearly an $NH$-bouquet of index $1$ is an $NH$-sphere. Note also that for every $n\geq 0$ and every $k\geq 0$ there exists an $NH$-bouquet $G$ of dimension $n$ and index $k$.
\end{obs}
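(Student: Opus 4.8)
The statement collects two assertions; neither needs more than unravelling definitions and exhibiting one explicit example, so my plan is to dispatch each in turn.

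For the first assertion I would simply match the two definitions. If $G$ is an $NH$-bouquet of index $1$, then $G=S+L$ with $S$ a top generated $NH$-bouquet of index $0$ --- that is, a top generated $NH$-ball --- and $L$ a top generated combinatorial ball with $S\cap L=\bd{L}$; but this is precisely a decomposition $G=B+L$ exhibiting $G$ as an $NH$-sphere, with $B=S$. (Conversely, a decomposition $S=B+L$ of an $NH$-sphere presents $S$ as an $NH$-bouquet of index $1$, since $B$ is an $NH$-ball, hence a top generated $NH$-bouquet of index $0$.) So there is nothing to do here beyond observing the coincidence of the two defining clauses.

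For the second assertion, for every $n\geq 0$ and $k\geq 0$ I would take the complex $G_{n,k}$ obtained from $\Delta^n$ by adjoining $k$ further isolated vertices $v_1,\dots,v_k$, and prove by induction on $k$ that it is an $NH$-bouquet of dimension $n$ and index $k$. First one checks that $G_{n,k}$ is an $NH$-manifold of dimension $n$: for a vertex $w$ of $\Delta^n$ one has $lk(w,G_{n,k})=lk(w,\Delta^n)=\Delta^{n-1}$, a combinatorial $(n-1)$-ball, while for each added vertex $v_i$ one has $lk(v_i,G_{n,k})=\emptyset=\bd{\Delta^0}$, an $NH$-sphere of dimension $-1$, so every vertex link is of the permitted type; since $\dim G_{n,k}=n$ this completes the verification (for $n=0$, $G_{0,k}$ is merely a set of $k+1$ points, a disconnected $0$-manifold). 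The base case $k=0$ is immediate: $G_{n,0}=\Delta^n$ is a collapsible $NH$-manifold of dimension $n$ (Theorem~\ref{thm homogeneous NH-manifolds}), hence an $NH$-ball, hence an $NH$-bouquet of index $0$. For the inductive step, write $G_{n,k}=G_{n,k-1}+L$ with $L=\{v_k\}$, a combinatorial $0$-ball; then $G_{n,k-1}\cap L=\emptyset=\bd{L}$, and both $G_{n,k-1}$ and $L$ are top generated in $G_{n,k}$, since the maximal simplices of $G_{n,k-1}$ (namely $\Delta^n$ together with $v_1,\dots,v_{k-1}$) and the maximal simplex $v_k$ of $L$ all remain maximal after adjoining the disjoint vertex $v_k$. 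Together with the inductive hypothesis that $G_{n,k-1}$ is an $NH$-bouquet of dimension $n$ and index $k-1$, this displays $G_{n,k}$ as an $NH$-bouquet of dimension $n$ and index $k$.

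I do not anticipate a genuine obstacle; the only points demanding a little attention are the degenerate conventions at the bottom of the induction --- in particular that an arbitrary finite vertex set is a $0$-manifold, so that $G_{0,k}$ qualifies --- and the purely formal bookkeeping that adjoining a disjoint vertex destroys no maximality, so that the top generated hypotheses survive the induction. As a sanity check, $G_{n,k}$ deformation retracts onto its $k+1$ isolated vertices, which is homotopy equivalent to a wedge of $k$ copies of $S^0$, consistent with the later assertion that an $NH$-bouquet of index $k$ is homotopy equivalent to a bouquet of $k$ spheres.
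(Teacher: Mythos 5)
Your proposal is correct; the paper states this as a remark without proof, and your argument supplies exactly the expected justification: the first claim is a literal unravelling of the two definitions, and the family consisting of $\Delta^n$ together with $k$ isolated vertices is a valid (if degenerate) witness for the second, with the vertex-link, top-generated, and $S\cap L=\bd{L}$ conditions all checked correctly, including the conventions $\bd{\Delta}^0=\emptyset$ and the $(-1)$-dimensional $NH$-sphere. Nothing further is needed.
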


Similarly  as in Theorem \ref{thm pl-invariancy of NH-manifolds}, it can be proved that the class of $NH$-bouquets is PL-closed. 

\begin{lema}\label{lemma central of q-bouquets} If $G=B+L_1+\cdots+L_k$ is a decomposition of an $NH$-bouquet of index $k\geq 2$, then $L_i\cap L_j=\bd{L}_i\cap\bd{L}_j$ for all $1\leq j<i\leq k$.\end{lema}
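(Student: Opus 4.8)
The inclusion $\bd{L}_i\cap\bd{L}_j\subseteq L_i\cap L_j$ is immediate, so the content is the reverse one, which I would split into $L_i\cap L_j\subseteq\bd{L}_i$ and $L_i\cap L_j\subseteq\bd{L}_j$. The first is formal: since $j<i$ we have $L_j\subseteq B+L_1+\cdots+L_{i-1}$, and by the definition of a decomposition $(B+L_1+\cdots+L_{i-1})\cap L_i=\bd{L}_i$, whence $L_i\cap L_j\subseteq\bd{L}_i$. All the difficulty lies in the second inclusion, and this is exactly where the hypothesis that $G$ is an $NH$-manifold (and not just a union of balls glued along boundaries) must enter: if $L_i$ met the interior of $L_j$, a simplex on that intersection would acquire a link that is neither an $NH$-ball nor an $NH$-sphere, contradicting regularity.

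So I would argue by contradiction: suppose $\sigma\in L_i\cap L_j$ with $\sigma\notin\bd{L}_j$. Since $\bd{L}_j=(B+\cdots+L_{j-1})\cap L_j$ and $\sigma\in L_j$, the simplex $\sigma$ lies in none of $B,L_1,\dots,L_{j-1}$, so every coface of $\sigma$ in $G$ lies in some $L_t$ with $\sigma\in L_t$ and $t\ge j$; thus $lk(\sigma,G)=\bigcup_{t\ge j,\ \sigma\in L_t}lk(\sigma,L_t)$. From the first inclusion $\sigma\in\bd{L}_i$, so $\sigma$ is not maximal in $L_i$; and $\sigma$ cannot be maximal in $L_j$ either, for otherwise it would be maximal in $G$ (as $L_j$ is top generated), forcing $lk(\sigma,L_i)\subseteq lk(\sigma,G)=\emptyset$ against $\sigma\in\bd{L}_i$. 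Hence $S:=lk(\sigma,L_j)$ is an honest combinatorial sphere of dimension $\dim L_j-\dim\sigma-1\ge 0$, and it is top generated in $Q:=lk(\sigma,G)$, because a maximal face of $S$ joined with $\sigma$ is a maximal simplex of $L_j$, hence a maximal simplex of $G$.

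Now $Q$ is an $NH$-ball or an $NH$-sphere by regularity (Proposition \ref{prop regularity of NH-manifolds}), and it contains the non-trivial top generated combinatorial sphere $S$; I claim this forces $Q=S$. Corollary \ref{coro nhbouquet cannot sphere} excludes $Q$ being a non-homogeneous $NH$-sphere of homotopy dimension $0$. If $Q$ is connected (an $NH$-ball or a connected $NH$-sphere) then $\dim Q\ge 1$ (a connected $NH$-manifold of dimension $0$ is a single point and cannot contain a sphere), and Corollary \ref{coro boundaryless in nhmanifold} applied to $S\subseteq Q$ gives $Q=S$; the $NH$-ball alternative is then absurd since $NH$-balls are contractible whereas $S$ is not. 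The only remaining possibility is $Q=\{p\}\sqcup B'$ with $B'$ an $NH$-ball: here, distinguishing $\dim S\ge 1$ (where $S$ is connected, hence lands top generated inside $B'$, and Corollary \ref{coro boundaryless in nhmanifold} applied to $B'$ again gives a contradiction) from $\dim S=0$ (where comparing the isolated maximal vertices of $Q$ with the two points of $S^0$, using that $NH$-balls are connected, forces $B'$ to be a point and $Q=S^0=S$), one again gets $Q=S$. In every surviving case $lk(\sigma,G)=lk(\sigma,L_j)$, i.e. every coface of $\sigma$ in $G$ lies in $L_j$. But $\sigma\in\bd{L}_i$ is a proper face of a top-dimensional simplex $\tau$ of $L_i$; this $\tau$ is a coface of $\sigma$, so $\tau\in L_j$, whence $\tau\in L_i\cap L_j\subseteq\bd{L}_i$ — impossible, since $\dim\tau=\dim L_i>\dim\bd{L}_i$. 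This contradiction yields $L_i\cap L_j\subseteq\bd{L}_j$ and the lemma follows.

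The step I expect to be the main obstacle is the case analysis on $Q$ in the third paragraph: the disconnected $NH$-spheres of homotopy dimension $0$ (in particular $S^0$ itself) are not covered by Corollary \ref{coro boundaryless in nhmanifold}, and one must dispatch them separately — the clean device is the count of isolated/maximal vertices, using that every $NH$-ball is connected. One should also keep in mind the two standard facts about combinatorial balls invoked implicitly along the way: that a combinatorial ball is homogeneous, so every simplex of $\bd{L}_i$ is a face of a top-dimensional one, and that $\bd{L}_i=\pbd{L_i}$ with the link in $L_i$ of a simplex of $\bd{L}_i$ being a combinatorial ball of the expected dimension.
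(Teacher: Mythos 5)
Your proof is correct and follows essentially the same route as the paper: the inclusion $L_i\cap L_j\subseteq\bd L_i$ is formal, and the inclusion into $\bd L_j$ is obtained by contradiction, showing $lk(\sigma,G)=lk(\sigma,L_j)$ via Corollaries \ref{coro boundaryless in nhmanifold} and \ref{coro nhbouquet cannot sphere} and then exhibiting a maximal simplex of $L_i$ inside $\bd L_i$. The only difference is that you spell out in full the case analysis (including the $S^0$ and disconnected-link cases) that the paper compresses into a single citation of those two corollaries.
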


\begin{proof}
$L_i\cap L_j\subseteq \bd L_i$ by definition. Suppose that $L_i\cap L_j\nsubseteq \bd L_j.$ Then there exists a simplex $\sigma\in L_i\cap L_j$ such that $lk(\sigma,L_j)$ is a sphere. By Corollaries \ref{coro boundaryless in nhmanifold} and \ref{coro nhbouquet cannot sphere}, $lk(\sigma,L_j)=lk(\sigma,G)$. In particular $lk(\sigma,L_i)\subseteq lk(\sigma,L_j)$, but if $\nu\in lk(\sigma, L_i)$ is maximal, then $\sigma\ast\nu$ is a maximal simplex in $G$ and it is contained in $L_i\cap L_j\subseteq \bd L_i$ which is a contradiction.
\end{proof}

\begin{prop}\label{corollary homotopy type of q-bouquets} If $G=B+L_1+\cdots +L_k$ is a decomposition of an $NH$-bouquet, then $\bd{L}_i\subseteq B$ for every $i=1,\ldots,k$. In particular, an $NH$-bouquet of index $k$ is homotopy equivalent to a bouquet of spheres of dimensions $\dim{L_i}$, for $1\leq i\leq k$.\end{prop}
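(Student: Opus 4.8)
The plan is to prove the inclusion $\bd{L}_i\subseteq B$ by induction on the index $k$, and then deduce the homotopy statement by collapsing. For the base case $k=1$ we have $G=B+L_1$ an $NH$-sphere, and $\bd{L}_1=B\cap L_1\subseteq B$ by definition. For the inductive step, suppose $G=B+L_1+\cdots+L_k$ with $k\geq 2$; set $S=B+L_1+\cdots+L_{k-1}$, which by definition is a top generated $NH$-bouquet of index $k-1$, and apply the inductive hypothesis to it to get $\bd{L}_i\subseteq B$ for $i=1,\ldots,k-1$. It remains to handle $\bd{L}_k$. We know $\bd{L}_k=S\cap L_k\subseteq S=B+L_1+\cdots+L_{k-1}$. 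The point is to show it actually lands in $B$. Take a simplex $\sigma\in\bd{L}_k$ that also lies in some $L_j$ with $1\leq j\leq k-1$; by Lemma \ref{lemma central of q-bouquets} applied to the decomposition, $\sigma\in L_k\cap L_j=\bd{L}_k\cap\bd{L}_j\subseteq\bd{L}_j\subseteq B$ by the inductive hypothesis. Since every simplex of $\bd{L}_k\subseteq B+L_1+\cdots+L_{k-1}$ is either in $B$ already or in some such $L_j$, we conclude $\bd{L}_k\subseteq B$.

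For the homotopy-type assertion, the idea is that $G$ is obtained from $B$ by attaching, for each $i$, the ball $L_i$ along its boundary sphere $\bd{L}_i$, which sits inside $B$. Since $B$ is an $NH$-ball, it is collapsible, hence contractible; collapsing $B$ to a point in a subdivision of $G$ (this can be done compatibly since $B$ is top generated and the $L_i$ meet $B$ only in their boundaries, which we have just shown lie in $B$) identifies each $\bd{L}_i$ with a point, so each $L_i/\bd{L}_i$ becomes a $\dim{L_i}$-sphere. Formally I would argue that $G$ deformation retracts onto $B\cup(\text{cones on the }\bd L_i)$, and that collapsing the contractible subcomplex $B$ yields a bouquet $\bigvee_{i=1}^{k}S^{\dim L_i}$; alternatively, one builds an explicit homotopy equivalence $G\to G/B=\bigvee_i L_i/\bd{L}_i$ using contractibility of $B$ and the cofibration property of the pair $(G,B)$. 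Either route gives the homotopy equivalence; note this also establishes that the index $k$ and the multiset $\{\dim L_i\}$ are invariants, since they are read off from the homology of $G$.

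The main obstacle I expect is the inductive step for $\bd{L}_k$: one has to be careful that a simplex of $\bd{L}_k$ lying in $S=B+L_1+\cdots+L_{k-1}$ but \emph{not} visibly in $B$ must in fact be in one of the $L_j$ and hence, via Lemma \ref{lemma central of q-bouquets}, in $\bd{L}_j\subseteq B$. This uses crucially that the decomposition is an honest $NH$-bouquet decomposition so that Lemma \ref{lemma central of q-bouquets} applies to \emph{all} pairs, and that the $L_j$ are top generated so a simplex of $S$ not in $B$ lies in the union of the $L_j$'s. A secondary technical point is to make the collapse of $B$ inside $G$ rigorous at the simplicial level — one passes to a common subdivision in which $B$ collapses to a vertex, checks that this subdivision can be chosen to restrict to each $L_i$ (so that the $\bd L_i\subseteq B$ are subdivided compatibly), and then invokes that a collapse of a subcomplex inside a complex realizes a deformation retraction; this is routine given Theorem \ref{thm pl-invariancy of NH-manifolds} and the standard machinery of \cite{Gla}, so I would state it briefly rather than belabor it.
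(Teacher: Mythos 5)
Your proof is correct and follows essentially the same route as the paper: the inclusion $\bd{L}_i\subseteq B$ is obtained by induction together with Lemma \ref{lemma central of q-bouquets} (a simplex of $\bd{L}_i$ not already in $B$ lies in some earlier $L_j$, hence in $\bd{L}_j\subseteq B$), and the homotopy statement follows by collapsing the contractible subcomplex $B$, so that $G$ is a point with cells of dimensions $\dim L_i$ attached. The extra care you take about simplicial collapses and compatible subdivisions is not needed for the homotopy-type claim (contractibility of $|B|$ and the CW-pair property suffice), but it does no harm.
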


\begin{proof} $\bd{L}_1\subseteq B$ by definition. For $i\geq 2$ the result follows immediately by induction and Lemma \ref{lemma central of q-bouquets}.

For the second statement, note that, since $\bd L_i\subseteq B$ for every $i$, $G$ is homotopy equivalent to a CW-complex obtained by attaching cells of dimensions $\dim{L_i}$ to a point.
\end{proof}

\begin{obs} It is not hard to see that a homogeneous $NH$-bouquet of dimension $n\geq 1$ is a combinatorial $n$-ball or $n$-sphere. This follows from Theorem \ref{thm homogeneous NH-manifolds} and Corollary \ref{coro boundaryless in nhmanifold}.\end{obs}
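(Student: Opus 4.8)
The plan is to prove the statement by induction on the index $k$ of the homogeneous $NH$-bouquet $G$ (of fixed dimension $n\geq 1$).

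\emph{Base case.} If $k=0$ then, by definition, $G$ is a homogeneous $NH$-ball of dimension $n$, and Theorem \ref{thm homogeneous NH-manifolds} immediately gives that $G$ is a combinatorial $n$-ball.

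\emph{Inductive step.} For $k\geq 1$ I would fix a decomposition $G=S+L$ as in the definition, with $S$ a top generated $NH$-bouquet of dimension $n$ and index $k-1$, $L$ a top generated combinatorial ball, and $S\cap L=\bd{L}$. Since $G$ is homogeneous of dimension $n$ and both $S$ and $L$ are top generated in $G$, every maximal simplex of $S$ and of $L$ has dimension $n$; hence $S$ is a \emph{homogeneous} $NH$-bouquet of dimension $n$ and index $k-1$, and $L$ is a combinatorial $n$-ball. By the inductive hypothesis, $S$ is either a combinatorial $n$-ball or a combinatorial $n$-sphere. Note also that $S$ and $L$ are connected (positive-dimensional combinatorial balls or spheres) and meet in $\bd{L}\neq\emptyset$, so $G$ is connected.

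Next I would rule out the possibility that $S$ is a sphere. Because $n\geq 1$ we have $L\neq\bd{L}$, so $L\nsubseteq S$ (otherwise $S\cap L=L=\bd{L}$, which is absurd); in particular $G\neq S$. If $S$ were a combinatorial $n$-sphere, it would be a top generated combinatorial $n$-manifold without boundary sitting inside the connected $NH$-manifold $G$ of dimension $n\geq 1$, and Corollary \ref{coro boundaryless in nhmanifold} would force $G=S$, a contradiction. Hence $S$ is a combinatorial $n$-ball. Reading Theorem \ref{thm homogeneous NH-manifolds} in reverse, $S$ is then a top generated $NH$-ball of dimension $n$, $L$ is a top generated combinatorial ($n$-)ball, and $S\cap L=\bd{L}$ --- which is exactly the data of an $NH$-sphere decomposition $G=S+L$. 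Thus $G$ is an $NH$-sphere, and being homogeneous, Theorem \ref{thm homogeneous NH-manifolds} again shows that $G$ is a combinatorial $n$-sphere. This closes the induction.

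The only point requiring a moment's care is the observation that $L\nsubseteq S$: this is what guarantees that the given decomposition of $G$ really is an $NH$-sphere decomposition (and, en passant, that homogeneous $NH$-bouquets of index $\geq 2$ do not exist). Everything else is a matter of matching the definitions against the two cited results, so I do not expect any genuine obstacle.
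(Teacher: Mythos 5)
Your argument is correct and is precisely the route the paper intends: the remark's ``proof'' is just the citation of Theorem \ref{thm homogeneous NH-manifolds} and Corollary \ref{coro boundaryless in nhmanifold}, and your induction on the index, using top-generatedness to transfer homogeneity to $S$ and $L$ and Corollary \ref{coro boundaryless in nhmanifold} to exclude the case where $S$ is a sphere, is the natural way to fill in the details. No issues to report.
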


The following result extends Lemma \ref{lemma border of q-spheres} and will be used below.

\begin{lema}\label{lemma border of q-bouquets} Let $G=B+L_1+\cdots+L_k$ be a decomposition of an $NH$-bouquet. Then every simplex in each $L_i$ is internal in $G$. Furthermore, if $\sigma\in\bd{L}_i$ then  $lk(\sigma,G)$ is an $NH$-sphere with decomposition $lk(\sigma,B)+lk(\sigma,L_i)$. In particular, $\pbd G=\pbd B-\cup_{i}L_i$.\end{lema}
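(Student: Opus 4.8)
The plan is to argue by induction on the index $k$. The base case $k=0$ is immediate: then $G=B$, there are no $L_i$, and $\pbd G=\pbd B=\pbd B-\bigcup_i L_i$. So let $k\geq 1$, write $G'=B+L_1+\cdots+L_{k-1}$, which (by the recursive description of $NH$-bouquets) is a top generated $NH$-bouquet of index $k-1$ inside $G$ with $G'\cap L_k=\bd L_k$, and assume the lemma for $G'$. Throughout I use two elementary facts: for every $i$ one has $B\cap L_i=\bd L_i$ (the inclusion $\subseteq$ is part of the decomposition axioms, and $\supseteq$ holds because $\bd L_i\subseteq B$ by Proposition \ref{corollary homotopy type of q-bouquets}); and, for any simplex $\sigma$, $lk(\sigma,G)=lk(\sigma,G')\cup lk(\sigma,L_k)$ with $lk(\sigma,G')\cap lk(\sigma,L_k)=lk(\sigma,\bd L_k)=\bd\,lk(\sigma,L_k)$, the last equality holding because $L_k$ is a combinatorial manifold.

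The crucial case is $\sigma\in\bd L_k$; then $\sigma\in B$. Since $\sigma\in\bd L_k$, the complex $lk(\sigma,L_k)$ is a top generated combinatorial ball in $lk(\sigma,G)$ (a maximal simplex $\nu$ of $lk(\sigma,L_k)$ gives $\sigma\ast\nu$ maximal in $L_k$, hence in $G$, hence $\nu$ maximal in $lk(\sigma,G)$), and likewise $lk(\sigma,G')$ is top generated in $lk(\sigma,G)$; being the link of a simplex in the $NH$-bouquet $G'$, it is an $NH$-ball or an $NH$-sphere by regularity (Proposition \ref{prop regularity of NH-manifolds}). If it were an $NH$-sphere, then $lk(\sigma,G)=lk(\sigma,G')+lk(\sigma,L_k)$ would exhibit $lk(\sigma,G)$ as an $NH$-bouquet of index $2$ — one glues the top generated combinatorial ball $lk(\sigma,L_k)$, along its boundary $\bd\,lk(\sigma,L_k)\subseteq lk(\sigma,\bd L_k)\subseteq lk(\sigma,G')$, to an $NH$-bouquet of index $1$, and the union is an $NH$-manifold since it equals $lk(\sigma,G)$. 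But $lk(\sigma,G)$ is an $NH$-ball or $NH$-sphere by regularity of $G$, hence of index $0$ or $1$, contradicting the well-definedness of the index (Proposition \ref{corollary homotopy type of q-bouquets}). Hence $lk(\sigma,G')$ is an $NH$-ball, i.e.\ $\sigma\in\pbd G'$. By the inductive hypothesis $\pbd G'=\pbd B-\bigcup_{j<k}L_j$, so $\sigma\in\pbd B$ and $\sigma\notin L_j$ for $j<k$; the latter gives $lk(\sigma,G')=lk(\sigma,B)$ and the former that this is an $NH$-ball. Using $B\cap L_k=\bd L_k$ we get $lk(\sigma,B)\cap lk(\sigma,L_k)=\bd\,lk(\sigma,L_k)$, so $lk(\sigma,G)=lk(\sigma,B)+lk(\sigma,L_k)$ is an $NH$-bouquet of index $1$, that is, an $NH$-sphere with the claimed decomposition, and $\sigma$ is internal in $G$.

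The remaining cases are easy. If $\sigma\in L_i-\bd L_i$ then, by $B\cap L_i=\bd L_i$ and Lemma \ref{lemma central of q-bouquets}, $\sigma$ lies in no $B$ and in no $L_j$ with $j\neq i$, so $lk(\sigma,G)=lk(\sigma,L_i)$ is the link of an interior simplex of the combinatorial ball $L_i$, hence a combinatorial sphere, and $\sigma$ is internal. If $\sigma\in\bd L_i$ with $i<k$, then $\sigma\notin L_k$: otherwise $\sigma\in L_i\cap L_k=\bd L_i\cap\bd L_k\subseteq\bd L_k$ by Lemma \ref{lemma central of q-bouquets}, which by the previous paragraph applied to $\sigma\in\bd L_k$ would force $\sigma\notin L_i$, a contradiction. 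Hence $lk(\sigma,G)=lk(\sigma,G')$ and the conclusion is the inductive hypothesis for $G'$. Combining the three cases, every simplex of every $L_i$ is internal in $G$, so none lies in $\pbd G$; and if $\sigma\notin\bigcup_i L_i$ then $\sigma\in B$ and no $L_j$ contributes to $lk(\sigma,G)$, so $lk(\sigma,G)=lk(\sigma,B)$ and $\sigma\in\pbd G\iff\sigma\in\pbd B$. Therefore $\pbd G=\pbd B-\bigcup_i L_i$, completing the induction.

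The one delicate point is the index computation in the crucial case: one must be sure that $lk(\sigma,G')+lk(\sigma,L_k)$ really is an $NH$-bouquet decomposition of $lk(\sigma,G)$ — that $lk(\sigma,L_k)$ is a genuine top generated combinatorial ball, that its boundary lies in $lk(\sigma,G')$, and that $lk(\sigma,G')$ is a top generated $NH$-bouquet inside $lk(\sigma,G)$ (which rests on $G'$ being top generated in $G$), the $NH$-manifold condition on the union being automatic. Granting this, the tension between ``$lk(\sigma,G)$ carries a bouquet decomposition built on $lk(\sigma,G')$'' and ``$lk(\sigma,G)$ is an $NH$-ball or $NH$-sphere'' is exactly what pins down $lk(\sigma,G')$ as an $NH$-ball and identifies the decomposition; the rest is bookkeeping with links of unions and joins.
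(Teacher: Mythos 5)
Your proof is correct, but it is organized quite differently from the paper's. The paper's own proof is a two-line argument: it reads off the identity $lk(\sigma,G)=lk(\sigma,B)+lk(\sigma,L_i)$ directly from Proposition \ref{corollary homotopy type of q-bouquets} ($\bd{L}_i\subseteq B$), together with $lk(\sigma,B)\cap lk(\sigma,L_i)=\bd{lk(\sigma,L_i)}$, and leaves implicit both that a simplex of $\bd{L}_i$ meets no other $L_j$ and that $lk(\sigma,B)$ is actually an $NH$-ball (which is needed for the displayed union to be a decomposition of an $NH$-sphere). You instead run an induction on the index $k$, peeling off $L_k$, and supply exactly those missing verifications: the key new ingredient is the index-counting contradiction (if $lk(\sigma,G')$ were an $NH$-sphere, then $lk(\sigma,G)$ would carry an index-$2$ bouquet decomposition, incompatible with its being an $NH$-ball or $NH$-sphere by regularity and by the homotopy invariance of the index from Proposition \ref{corollary homotopy type of q-bouquets}), which then pins down $\sigma\in\pbd{G'}=\pbd{B}-\bigcup_{j<k}L_j$ and hence both $lk(\sigma,G')=lk(\sigma,B)$ and the fact that $\sigma$ lies in no earlier $L_j$. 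You correctly flag the one delicate point, namely that $lk(\sigma,G')$ and $lk(\sigma,L_k)$ must be top generated in $lk(\sigma,G)$ for the index-$2$ decomposition to be legitimate; this is indeed what rules out degenerate configurations (e.g.\ $lk(\sigma,G')$ collapsing onto $\bd{lk(\sigma,L_k)}$), and it also forces $\dim lk(\sigma,G')=\dim lk(\sigma,G)$ as the bouquet definition requires, a small dimension check you could have made explicit. The trade-off: the paper's proof is shorter because it treats the decomposition identity as an immediate consequence of $\bd{L}_i\subseteq B$ and Lemma \ref{lemma central of q-bouquets}, while your version is longer but genuinely self-contained, in particular proving rather than assuming that distinct $L_i$'s cannot share boundary simplices in an $NH$-bouquet.
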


\begin{proof} It is clear that every simplex internal in $L_i$ is internal in $G$. Given $\sigma\in\bd{L}_i$, by Proposition \ref{corollary homotopy type of q-bouquets} $lk(\sigma,G)=lk(\sigma,B)+ lk(\sigma,L_i)$. Also $lk(\sigma,L_i)\cap lk(\sigma,B)=\bd{lk(\sigma,L_i)}$.\end{proof}

Shellings are structure-preserving moves that transform a combinatorial manifold into another one. They were first studied by Newman \cite{New} (see also \cite{Lic,Rou,Whi}) and they turned out to be central in the theory. At the beginning of the 90's Pachner \cite{Pach} showed that two (connected) combinatorial manifolds with boundary are PL homeomorphic if and only if one can obtain one from the other by a sequence of elementary shellings, inverse shellings and simplicial isomorphisms (see also \cite{Lic}).

An \emph{elementary shelling} on a combinatorial $n$-manifold $M$ is the move $M\stackrel{sh}\rightarrow M'=\overline{M-\tau}$, where $\tau=\sigma\ast\eta$ is an $n$-simplex of $M$ with $\sigma\in \overset{\circ}{M}$ and $\partial\sigma\ast\eta\subset\partial M$. The opposite move is called an \emph{inverse shelling}. It is not hard to see that these moves are special cases of regular collapses and expansions and therefore, they preserve the structure of the manifold.

A combinatorial $n$-manifold which can be transformed into a single $n$-simplex by a sequence of elementary shellings is said to be \emph{shellable}. Shellable combinatorial $n$-manifolds are collapsible and, hence, combinatorial $n$-balls. The definition of shellability can also be extended to combinatorial $n$-spheres by declaring $S$ to be shellable if for some $n$-simplex $\sigma$, $\overline{S-\sigma}$ is a shellable $n$-ball.

The alternative, and more constructive, definition of shellability by means of inverse shellings requires the existence of a linear order $F_1,\ldots,F_t$ of all the $n$-simplices such that $F_k\cap(F_1+\cdots+F_{k-1})$ is ($n-1$)-homogeneous for all $2\leq k\leq t$. This formulation can be used to define the concept of shellability in arbitrary $n$-homogeneous complexes.
It is not difficult to see that  shellable pseudo manifolds are necessarily combinatorial balls (\cite[Proposition 4.7.22]{Bjo}). It is also known that every ball of dimension less than or equal to 2 is shellable. Examples of non-shellable $3$-balls abound in the bibliography, the first one was discovered by Furch in 1924 (see \cite{Zie} for a survey of non-shellable  $3$-balls). A way for constructing non-shellable balls for every $n\geq 3$ was presented by Lickorish in \cite{Lic2}.

Shellability in the non-homogeneous context was first considered by Bj\"orner and Wachs \cite{Bjo2} in the 90's. A finite (non-necessarily homogeneous) simplicial complex is shellable if there is a linear order $F_1,\ldots,F_t$ of its maximal simplices such that $F_k\cap(F_1+\cdots+F_{k-1})$ is ($\dim{F_k}-1$)-homogeneous for all $2\leq k\leq t$. A simplex $F_k$ is said to be a \emph{spanning simplex} if $F_k\cap(F_1+\cdots+F_{k-1})=\bd{F_k}$. It is not hard to see that the spanning simplices may be moved to any later position in the shelling order (see \cite{Koz}). It is known that a shellable complex is homotopy equivalent to a wedge of spheres, which are indexed by the spanning simplices (see \cite[Theorem 12.3]{Koz}). In particular, shellable $NH$-balls cannot have spanning simplices and shellable $NH$-spheres have exactly one spanning simplex. In general, a shellable $NH$-bouquet of index $k$ must have exactly $k$ spanning simplices.


\begin{teo}\label{thm main of shellable NH-manifolds} Let $M$ be a shellable $NH$-manifold. Then, for every shelling order $F_1,\ldots,F_t$ of $M$ and every $0\leq l\leq t$, $\mathcal{F}_l(M)=F_1+\cdots+F_l$ is an $NH$-manifold. Moreover, $\mathcal{F}_l(M)$ is an $NH$-bouquet of index $\sharp\{F_j\in\mathcal{T}\,|\,j\leq l\}$, where $\mathcal{T}$ is the set of spanning simplices. In particular, $M$ is an $NH$-bouquet of index $\sharp\mathcal{T}$.\end{teo}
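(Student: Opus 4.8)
The plan is to induct on the length $l$ of the partial shelling, proving simultaneously that $\mathcal{F}_l(M)$ is an $NH$-manifold and that it is an $NH$-bouquet of index equal to the number of spanning simplices appearing among $F_1,\dots,F_l$. The base case $l=0$ is empty (or $l=1$, where $\mathcal{F}_1(M)=F_1$ is a simplex, hence an $NH$-ball of index $0$). For the inductive step, suppose $G:=\mathcal{F}_{l-1}(M)$ is an $NH$-bouquet and consider $\mathcal{F}_l(M)=G+F_l$. Write $F_l=\sigma\ast\eta$ where $\sigma$ is the face of $F_l$ such that $G\cap F_l=\partial\sigma\ast\eta$ if $F_l$ is not spanning, and $G\cap F_l=\partial F_l$ (so $\sigma=F_l$, $\eta=\emptyset$) if $F_l$ is spanning; by the shelling axiom $G\cap F_l$ is $(\dim F_l-1)$-homogeneous, which is exactly what lets us choose such a $\sigma$. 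The two cases will be handled separately.

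First I would treat the non-spanning case. Here $G\cap F_l=\partial\sigma\ast\eta$ is a combinatorial ball of dimension $\dim F_l-1$ sitting inside $\partial F_l$, so the move $\mathcal{F}_l(M)\to G$ is an elementary geometrical collapse; in particular $\mathcal{F}_l(M)\searrow G$ and $\mathcal{F}_l(M)$ has the same homotopy type as $G$, and a decomposition of $G$ transfers verbatim to a decomposition of $\mathcal{F}_l(M)$ once we check it is an $NH$-manifold. To check the manifold condition we compute links: for $v\notin F_l$ nothing changes; for $v\in F_l$, $lk(v,\mathcal{F}_l(M))=lk(v,G)+lk(v,F_l)$ and the intersection is $lk(v,G\cap F_l)=\partial lk(v,\sigma)\ast\eta'$ (with the obvious adjustment when $v\in\eta$), so we are in the same situation one dimension down and can invoke Proposition~\ref{prop of pseudofrontier} — the collapsibility of $lk(\tau,G)$ for $\tau\in G\cap F_l$ follows because those links are themselves joins of boundaries of simplices with $NH$-balls, hence collapsible by Lemma~\ref{lemma simplex join q-ball q-sphere} together with Lemma~\ref{lemma border of q-bouquets}. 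Thus $\mathcal{F}_l(M)$ is an $NH$-manifold, and being built from $G$ by a geometrical collapse does not change the bouquet structure or the index.

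The spanning case is the heart of the matter, and I expect it to be the main obstacle. Now $G\cap F_l=\partial F_l$, and we must verify that $\partial F_l\subseteq G$ is top generated in $G$, that $G+F_l$ is an $NH$-manifold, and that this exhibits $\mathcal{F}_l(M)$ as an $NH$-bouquet of index one greater than that of $G$. That $\partial F_l$ is top generated and that $G,F_l$ are top generated in $G+F_l$ is the content (verbatim) of Proposition~\ref{prop of pseudofrontier}'s opening argument, so the crux is the manifold condition. Again one computes links at vertices $v\in F_l$: $lk(v,\mathcal{F}_l(M))=lk(v,G)+lk(v,F_l)$ with $lk(v,G)\cap lk(v,F_l)=lk(v,\partial F_l)=\partial lk(v,F_l)$, and $lk(v,F_l)$ is a combinatorial simplex; one then wants to recognize $lk(v,G)$ as an $NH$-bouquet (by Lemma~\ref{lemma border of q-bouquets} applied to the decomposition of $G$, noting $v\in\partial F_l\subseteq B$), so that gluing a ball along $\partial lk(v,F_l)\subseteq lk(v,G)$ again produces an $NH$-bouquet of index one higher — which requires knowing $\partial lk(v,F_l)$ is top generated in $lk(v,G)$, and that $lk(v,G)$ together with this ball satisfies the hypotheses of Proposition~\ref{prop of pseudofrontier}. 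This is where one must be careful that the various ``top generated'' conditions descend to links and that the collapsibility hypothesis (2) of Proposition~\ref{prop of pseudofrontier} holds for $\sigma\in\partial F_l$; I would verify the latter exactly as in the proof of Theorem~\ref{thm join of q-balls and q-spheres}, splitting according to which boundary faces $\sigma$ lies in and using Lemma~\ref{lemma previous to joins}. Once $\mathcal{F}_l(M)$ is known to be an $NH$-manifold, the decomposition $G+F_l=(B+L_1+\cdots+L_{k-1})+F_l$ with $\partial F_l\subseteq G$ is by definition an $NH$-bouquet of index $k$, completing the induction; taking $l=t$ gives the final assertion, and the index count matches $\sharp\mathcal{T}$ because the index increments precisely at the spanning steps.
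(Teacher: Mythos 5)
Your overall strategy (induction on the number $l$ of facets added, splitting into spanning and non-spanning steps) is different from the paper's, and it founders on one specific point: you need a lemma saying that attaching the facet $F_l$ to the $NH$-bouquet $G=\mathcal{F}_{l-1}(M)$ along the ball or sphere $G\cap F_l\subseteq\partial F_l$ again produces an $NH$-manifold, and Proposition \ref{prop of pseudofrontier} cannot supply it. That proposition runs in the opposite direction: it assumes $K+B$ is an $NH$-manifold and concludes that $K$ is one (a deletion statement), whereas you need to pass from $G$ to $G+F_l$ (an attachment statement); invoking it here is circular, since its hypothesis is exactly your desired conclusion. The attachment result you actually need is Theorem \ref{maintheorem} (the non-homogeneous Alexander theorem), which the paper proves only in the \emph{following} section by a substantially more involved double induction with subdivisions, and which moreover requires that $G\cap F_l$ be generated by ridges and that $(G\cap F_l)^{\circ}\subseteq\tilde{\partial}G$ --- conditions you would still have to verify at each shelling step. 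Similar unproved claims close each of your cases: that ``a geometrical expansion does not change the bouquet structure'' (you must check that $F_l$ can be absorbed into the $NH$-ball part $B$ of a decomposition of $G$, that $B+F_l$ is again an $NH$-ball, and that $F_l\cap L_i\subseteq\partial L_i$ for the remaining balls $L_i$), and, in the spanning case, that $G+F_l$ is an $NH$-manifold at all.

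The paper sidesteps all of this by inducting on $\dim M$ rather than on $l$: for any vertex $v$, $lk(v,M)$ is a shellable $NH$-ball or $NH$-sphere with the induced shelling order $lk(v,F_1),\ldots,lk(v,F_t)$, and $lk(v,\mathcal{F}_l(M))=\mathcal{F}_l(lk(v,M))$, so the inductive hypothesis one dimension down says every vertex link of $\mathcal{F}_l(M)$ is an $NH$-bouquet of index at most $1$, i.e.\ an $NH$-ball or $NH$-sphere; this yields the manifold condition with no attachment lemma whatsoever. The bouquet structure then follows by moving the spanning simplices to the end of the order and quoting Kozlov's result that the non-spanning initial segment is collapsible, hence an $NH$-ball, after which the spanning simplices are glued on by definition of $NH$-bouquet. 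If you wish to keep your facet-by-facet induction, you must first prove (or cite) Theorem \ref{maintheorem} and verify its hypotheses at each step; as written, the argument has a genuine gap at its central step.
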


\begin{proof} We proceed by induction on $n=\dim{M}$. Suppose $n\geq 1$ and fix a shelling order $F_1,\ldots,F_t$. Let $1\leq l\leq t$ and let $v\in M$ be a vertex. Since $lk(v,M)$ is a shellable $NH$-ball or $NH$-sphere with shelling order $lk(v,F_1),\ldots,lk(v,F_t)$ (some of them possibly empty), then by induction $\mathcal{F}_j(lk(v,M))$ is an $NH$-bouquet of index at most $1$ for all $1\leq j\leq l$. Since $lk(v,\mathcal{F}_l(M))=\mathcal{F}_l(lk(v,M))$ then $\mathcal{F}_l(M)$ is an $NH$-manifold. To see that $\mathcal{F}_l(M)$ is actually an $NH$-bouquet, reorder $F_1,\ldots,F_l$ so that the spanning simplices are placed at the end of the order. If $F_{p+1}$ is the first spanning simplex in the order, then $\mathcal{F}_p(M)$ is a collapsible $NH$-manifold (see \cite[Theorem 12.3]{Koz}) and hence an $NH$-ball. Then, $\mathcal{F}_l(M)=\mathcal{F}_p(M)+F_{p+1}+\cdots+F_l$ is an $NH$-bouquet of index $\sharp\{F_j\in\mathcal{T}|j\leq l\}$ by definition.
\end{proof}

\section{Regular collapses, elementary shellings and Pachner moves}

Recall that a regular expansion in an $n$-combinatorial manifold $M$ is a geometrical expansion $M\rightarrow N=M+B^n$ such that $M\cap B^n\subset\bd{M}$. As we mentioned before, this move produces a new combinatorial $n$-manifold. In this section we prove a general version of this result for $NH$-manifolds. We start with some preliminary results.

\begin{lema}\label{lemma miniball inside ball} Let $B$ be a combinatorial $n$-ball and let $L\subset \bd{B}$ be a combinatorial ($n-1$)-ball. Then, there exists a stellar subdivision $s$ such that $sB\searrow sL$.\end{lema}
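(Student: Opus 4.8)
The goal is a purely PL statement: for a combinatorial $n$-ball $B$ with a combinatorial $(n-1)$-ball $L\subset\bd{B}$, some stellar subdivision $s$ satisfies $sB\searrow sL$. The plan is to reduce to the well-known fact that $\Delta^n$ collapses to any face (indeed to any single vertex), and more usefully to the fact that $\Delta^n$ collapses onto the star of any of its vertices in its boundary. First I would recall that $\overline{\bd{B}-L}$ is a combinatorial $(n-1)$-ball $L'$ meeting $L$ in the $(n-2)$-sphere $\bd{L}=\bd{L'}$, so that $\bd{B}=L+L'$ is a combinatorial $(n-1)$-sphere split into two balls. The key classical input is: if $B$ is a combinatorial $n$-ball and $L\subseteq\bd{B}$ is a combinatorial $(n-1)$-ball, then $B\searrow L$ \emph{after suitable subdivision} — this is essentially \cite[Corollary III.4 and Chapter III]{Gla}, but the cited results are stated for simplices or for the derived-subdivision setting, which is exactly why a stellar subdivision must be introduced.

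The main technical step I would carry out is the model case $B=\Delta^n$. Pick a vertex $a$ in the interior of $L$ (after one starring if necessary so that $L$ has an interior vertex in $B$'s triangulation), and use the standard fact that $\Delta^n$, being a cone $a\ast\bd{\Delta^n}$ after starring, collapses to the subcone $a\ast L_0$ whenever $L_0$ is obtained from the relevant part of $\bd{\Delta^n}$; iterating elementary collapses of the free faces not in $L$ yields $\Delta^n\searrow L$ in a subdivided model. Concretely: take a derived subdivision $\delta\Delta^n$ and push all the ``extra'' simplices off, collapsing from the $L'$ side toward $L$, using that $L'\searrow \bd{L'}=\bd{L}\subseteq L$. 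For the general $B$, invoke \cite[Theorem I.2]{Gla} in the same way it is used in the proof of Theorem \ref{thm pl-invariancy of NH-manifolds}: there is a stellar subdivision $\delta$ and an arbitrary subdivision $\beta$ with $\beta B=\delta(\alpha\Delta^n)$ for some subdivision $\alpha$ matching the combinatorial identification $B\cong_{PL}\Delta^n$ sending $L$ to a subcomplex of $\bd{\Delta^n}$; since stellar subdivisions preserve collapses, the model collapse transports to $\beta B\searrow \beta L$, and finally one replaces $\beta$ by a stellar subdivision $s$ refining it (again via \cite[Theorem I.2]{Gla}) so that the collapse persists, giving $sB\searrow sL$.

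The step I expect to be the real obstacle is producing the collapse \emph{compatibly with a stellar (not merely arbitrary) subdivision}: the cleanest proofs that a ball collapses onto a boundary subball live naturally in the derived/barycentric world, and one must be careful that the subdivision witnessing the collapse can be taken stellar so that Glaser's ``stellar subdivisions preserve collapses'' lemma applies without circularity. I would handle this exactly as the paper does elsewhere — first get the collapse after \emph{some} subdivision, then use \cite[Theorem I.2]{Gla} to find a common stellar refinement, and note that a stellar subdivision of a collapsible pair is again collapsible (\cite[Theorem III.6]{Gla}, already cited in Lemma \ref{lemma previous to joins}). The remaining bookkeeping — that $\overline{\bd{B}-L}$ is a ball, that $L'$ collapses to its boundary, and that interior free faces can be cancelled in an order respecting the stellar structure — is routine given the classical PL toolkit assumed above.
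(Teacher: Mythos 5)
There is a genuine gap, and it is not where you predicted (the stellar-versus-arbitrary refinement issue); it is in the reduction to the model case and in the model case itself. First, a concrete error: you propose to drive the collapse ``from the $L'$ side toward $L$, using that $L'\searrow\bd{L'}=\bd{L}$''. An $(n-1)$-ball never collapses onto its boundary $(n-2)$-sphere (a collapse is a homotopy equivalence, and a ball is not homotopy equivalent to a sphere), so this cannot be the mechanism. The correct mechanism in the cone picture is that each principal simplex $\sigma$ of $L'$ is a free face of $a\ast\sigma$, giving $a\ast\bd{B}\searrow a\ast L$; but this requires $B$ to literally be the cone $a\ast\bd{B}$ over an \emph{interior} vertex $a$ (your $a$ is placed in the interior of $L\subset\bd{B}$, where no such cone structure exists), and it only finishes if additionally $a\ast L\searrow L$, which needs $L$ itself --- not merely some subdivision of it --- to be collapsible, something you never arrange.

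Second, the reduction to $\Delta^n$ is circular as stated. A PL identification $B\cong\Delta^n$ becomes simplicial only after subdividing both sides; a subdivision of $\Delta^n$ is no longer a cone over its boundary, and the image of $L$ is an arbitrary $(n-1)$-ball in that subdivided boundary, so ``the model case'' is word-for-word the general statement again. The missing ingredient is the straightening of the \emph{pair}, and that is essentially the paper's whole proof: by \cite[Lemma III.8]{Gla} there exist a derived subdivision $\delta$ and a subdivision $\alpha$ with $\delta B=\alpha\Delta^n$ and $\delta L=\alpha\Delta^{n-1}$ for an honest facet $\Delta^{n-1}\prec\Delta^n$, and then \cite[Lemma III.7]{Gla} produces a stellar subdivision $\tilde{s}$ with $\tilde{s}\alpha\Delta^n\searrow\tilde{s}\alpha\Delta^{n-1}$, so $s=\tilde{s}\delta$ works. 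None of the results you do invoke (Corollary III.4, Theorem I.2, Theorem III.6 of Glaser) substitutes for this unknotting-of-pairs step; until it is isolated and cited (or proved), the argument does not close.
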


\begin{proof} By \cite[Lemma III.8]{Gla} there exists a derived subdivision $\delta$ and a subdivision $\alpha$ such that $\delta B=\alpha\Delta^n$ and $\delta L=\alpha\Delta^{n-1}$, where $\Delta^{n-1}$ is an ($n-1$)-face of $\Delta^n$. Now, by \cite[Lemma III.7]{Gla} there exists a stellar subdivision $\tilde{s}$ such that $\tilde{s}\alpha\Delta^n\searrow \tilde{s}\alpha\Delta^{n-1}$ and therefore $\tilde{s}\delta B\searrow \tilde{s}\delta L$.\end{proof} 

\begin{coro}\label{coro qball inside ball} Let $B$ be a combinatorial $n$-ball and let $K\subset \bd{B}$ be a collapsible complex. Then, there exists a stellar subdivision $s$ such that $sB\searrow sK$.\end{coro}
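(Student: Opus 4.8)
The plan is to reduce to Lemma \ref{lemma miniball inside ball} by thickening $K$ to a combinatorial $(n-1)$-ball sitting inside $\bd{B}$ and collapsing onto $K$. First I would take a regular (say derived) neighbourhood $L=N(K,\bd{B})$ of $K$ in the combinatorial $(n-1)$-sphere $\bd{B}$, after passing, if necessary, to a subdivision of $B$ so that $K\subseteq L\subseteq\bd{B}$ holds with $L$ an honest subcomplex. Since $\bd{B}$ is a combinatorial manifold without boundary, $L$ is a combinatorial $(n-1)$-manifold, and by the standard properties of regular neighbourhoods (see \cite{Gla,Rou}) one has $L\searrow K$. As $K$ is collapsible, this exhibits $L$ as a collapsible combinatorial manifold, hence $L$ is a combinatorial $(n-1)$-ball by Whitehead's theorem \cite[Corollary III.17]{Gla}. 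Note that $L$ is a proper subcomplex of $\bd{B}$, since a sphere is not collapsible, so the hypotheses of Lemma \ref{lemma miniball inside ball} are met.

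Next I would apply Lemma \ref{lemma miniball inside ball} to the pair $(B,L)$, obtaining a stellar subdivision $s$ with $sB\searrow sL$. Restricting $s$ to the subcomplexes $L$ and $K$ yields stellar subdivisions of them, and since stellar subdivisions preserve collapses, the relation $L\searrow K$ gives $sL\searrow sK$. Composing the two collapses, $sB\searrow sL\searrow sK$, which is the desired conclusion. The preliminary subdivision of $B$ used to realise $L$ as a subcomplex of $\bd{B}$ can itself be taken stellar (for instance a derived subdivision), so the total subdivision of $B$ that appears in the statement is again stellar; the low-dimensional cases $n\leq 1$ are immediate.

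I do not expect a genuine obstacle here: the argument is a short reduction to Lemma \ref{lemma miniball inside ball}. The only non-routine ingredient is the classical input from regular neighbourhood theory --- that a regular neighbourhood of a collapsible complex in a combinatorial sphere is a ball that collapses back onto it --- and the only point deserving a line of care is that the stellar subdivision produced by Lemma \ref{lemma miniball inside ball} restricts to stellar subdivisions of $L$ and $K$, so that the collapse $L\searrow K$ survives after subdivision.
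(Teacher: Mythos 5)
Your argument is essentially identical to the paper's: both pass to a (second barycentric) subdivision, take a regular neighbourhood $N$ of $K$ in $\bd{B}$, invoke Whitehead's theorem to see that $N$ is an $(n-1)$-ball, apply Lemma \ref{lemma miniball inside ball} to get $sB\searrow sN$, and compose with $sN\searrow sK$. The proposal is correct; your explicit remark that the stellar subdivision restricts compatibly so that $N\searrow K$ survives is a point the paper leaves implicit.
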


\begin{proof} Subdivide $B$ baricentrically twice and consider a regular neighborhood $N$ of $K''$ in $\bd{B}''$ (see \cite[Corollary III.17]{Gla}). Since $K''$ is collapsible, then $N$ is an ($n-1$)-ball. Since $N\subset\bd{B}''$, by the previous lemma, there is a stellar subdivision $\tilde{s}$ such that $\tilde{s}B''\searrow \tilde{s}N$. We conclude that $\tilde{s}B''\searrow \tilde{s}N\searrow \tilde{s}K''$.\end{proof}

\begin{teo}\label{maintheorem} Let $M$ be an $NH$-manifold and $B^r$ a combinatorial $r$-ball. Suppose $M\cap B^r\subseteq\bd{B}^r$ is an $NH$-ball or an $NH$-sphere generated by ridges of $M$ or $B^r$ and that $(M\cap B^r)^{\circ}\subseteq\pbd M$. Then $M+B^r$ is an $NH$-manifold. Moreover, if $M$ is an $NH$-bouquet of index $k$ and $M\cap B^r\neq\emptyset$ for $r\neq 0$, then $M+B^r$ is an $NH$-bouquet of index $k$ (if $M\cap B^r$ is an $NH$-ball) or $k+1$ (if $M\cap B^r$ is an $NH$-sphere).\end{teo}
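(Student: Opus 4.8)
The plan is to verify the defining conditions of an $NH$-manifold for $M+B^r$ by inspecting links of vertices, reducing everything to the local structure results already available, and then to track the $NH$-bouquet structure separately. First I would establish that $M$ and $B^r$ are both top generated in $M+B^r$: since $M\cap B^r\subseteq\bd B^r$ and $M\cap B^r$ is generated by ridges of $M$ or $B^r$, a principal simplex of $B^r$ cannot lie in $M$ (it has dimension $r$ while $M\cap B^r$ has dimension $\leq r-1$), and a principal simplex of $M$ lying in $M\cap B^r$ would be a ridge of $B^r$, hence not principal there either, contradicting principality in $M+B^r$ unless it is already principal in $M$. So both pieces are top generated, and in particular $r\leq n=\dim M$ whenever $M\cap B^r\neq\emptyset$.

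Next I would check that $M+B^r$ is an $NH$-manifold by examining $lk(v,M+B^r)$ for each vertex $v$. If $v\notin B^r$ then $lk(v,M+B^r)=lk(v,M)$, which is an $NH$-ball or $NH$-sphere by hypothesis. If $v\in B^r-M$ then $lk(v,M+B^r)=lk(v,B^r)$, a combinatorial ball or sphere since $B^r$ is a combinatorial manifold. The interesting case is $v\in M\cap B^r$: here $lk(v,M+B^r)=lk(v,M)+lk(v,B^r)$, and the intersection is $lk(v,M\cap B^r)$, which again is an $NH$-ball or $NH$-sphere generated by ridges, with its relative interior contained in $\pbd{lk(v,M)}$ since $(M\cap B^r)^{\circ}\subseteq\pbd M$ and links commute with pseudo boundary. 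Thus the pair $(lk(v,M),lk(v,B^r))$ satisfies exactly the hypotheses of the theorem in lower dimension, so by induction $lk(v,M+B^r)$ is an $NH$-ball or $NH$-sphere. (For the base of the induction, $r=0$ is immediate: $B^0$ is a point glued at a vertex already in $\pbd M$, or disjoint, and one checks the links directly.) Hence $M+B^r$ is an $NH$-manifold. I would also verify here that the $NH$-ball versus $NH$-sphere dichotomy is preserved appropriately, using collapsibility: if $M\cap B^r$ is an $NH$-ball, then by Corollary \ref{coro qball inside ball} some stellar subdivision of $B^r$ collapses onto the corresponding subdivision of $M\cap B^r$, so the added material collapses into $M$ and $M+B^r$ inherits the homotopy type and bouquet index of $M$; if $M\cap B^r$ is an $NH$-sphere, $B^r$ plays the role of an extra combinatorial ball glued along its boundary $\bd{B^r}\supseteq M\cap B^r$, raising the index by one.

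For the bouquet statement, suppose $M=B+L_1+\cdots+L_k$ is a decomposition. In the $NH$-ball case, after the stellar subdivision from Corollary \ref{coro qball inside ball} the complex $M+B^r$ stellarly collapses back to $M$ (away from the $L_i$, which are top generated and untouched), so $M+B^r\sim M$ is an $NH$-bouquet of index $k$; since the class of $NH$-bouquets is PL-closed this suffices. In the $NH$-sphere case, $M\cap B^r$ is itself an $NH$-sphere sitting inside $\bd{B^r}$ and generated by ridges; writing $B^r=B''+L'$ as a trivial decomposition is not quite what is needed — rather, $M+B^r=(M)+B^r$ with $M$ top generated and $B^r$ a top generated combinatorial ball meeting $M$ in the $NH$-sphere $M\cap B^r$, which is not $\bd{B^r}$, so one cannot directly read off a decomposition. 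Instead I would argue that $M\cap B^r$ bounds inside $B^r$: since $M\cap B^r$ is a top generated $NH$-sphere in the combinatorial ball $B^r$, Newman-type considerations (or Lemma \ref{lemma q-submanifold contained in q-manifold} applied suitably) show $\overline{B^r-(M\cap B^r)}$ behaves like a combinatorial ball attached along its boundary, giving a decomposition $M+B^r=B+L_1+\cdots+L_k+L_{k+1}$ with $L_{k+1}$ this complementary ball.

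The main obstacle I anticipate is this last point: in the $NH$-sphere case, producing an honest decomposition of $M+B^r$ as an $NH$-bouquet of index $k+1$, i.e. identifying the ``new'' combinatorial ball $L_{k+1}$ and checking that $(B+L_1+\cdots+L_k)\cap L_{k+1}=\bd{L_{k+1}}$. This requires understanding how an $NH$-sphere embedded in the boundary of a combinatorial ball separates that ball, which is where Newman's theorem and the regularity results for $NH$-manifolds (Proposition \ref{prop regularity of NH-manifolds}, Lemma \ref{lemma q-submanifold contained in q-manifold}) will have to be combined carefully; the link-by-link induction for the ``$NH$-manifold'' part, by contrast, should be routine given Lemma \ref{lemma simplex join q-ball q-sphere} and Proposition \ref{prop of pseudofrontier}.
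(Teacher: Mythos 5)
Your reduction of the first assertion to links is essentially the paper's argument: both proofs first show that $M$ and $B^r$ are top generated in $M+B^r$, then induct on $r$ by checking that the pair $(lk(v,M),lk(v,B^r))$ inherits the hypotheses of the theorem for $v\in M\cap B^r$. You do gloss over one point the paper must handle separately, namely the case $lk(v,M\cap B^r)=\emptyset$: there $v$ is principal in $M\cap B^r$, hence lies in $(M\cap B^r)^{\circ}\subseteq\pbd M$, and one must argue directly that $lk(v,M)=\ast$ or $lk(v,B^r)=\ast$ in order both to verify the ``generated by ridges'' hypothesis for the links and to see that $lk(v,M+B^r)$ is an $NH$-sphere of homotopy dimension $0$. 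That omission is repairable.

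The bouquet statement is where the gaps are genuine. First, for $M\cap B^r$ an $NH$-ball and $k\geq 1$ you argue that (a stellar subdivision of) $M+B^r$ collapses to $M$ and conclude that $M+B^r\sim M$ is an $NH$-bouquet of index $k$ by PL-closedness. A collapse is not a PL-homeomorphism, and collapsing onto (or being homotopy equivalent to) an $NH$-bouquet does not make a complex an $NH$-bouquet: the definition demands an explicit decomposition into top generated subcomplexes. Your collapsibility argument only closes the case $k=0$, where ``collapsible $NH$-manifold'' is literally the definition of $NH$-ball. The paper instead inducts on the index: writing $M=G+L$ with $G$ of index $k-1$, it shows $B^r\cap L\subseteq\partial L$ (via Corollaries \ref{coro boundaryless in nhmanifold} and \ref{coro nhbouquet cannot sphere}), so that $M\cap B^r=G\cap B^r$, applies the inductive hypothesis to $G+B^r$, and reattaches $L$ along its whole boundary. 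Second, in the $NH$-sphere case you correctly identify the crux --- exhibiting the new top generated combinatorial ball $L_{k+1}$ with intersection equal to $\bd{L}_{k+1}$ --- but your proposed candidate $\overline{B^r-(M\cap B^r)}$ cannot work: $M\cap B^r$ has dimension at most $r-1$, so that closure is all of $B^r$, and $B^r$ meets $M$ in the $NH$-sphere $M\cap B^r$, not in $\bd{B}^r$. The paper's device here is the essential missing idea: decompose $M\cap B^r=G+L$, star a principal simplex $\tau$ of $L$ at $\hat{\tau}$, then star again inside the simplex $\eta=w\ast\tau$ of $B^r$ (or of $M$) having $\tau$ as an immediate face, so as to carve out a single simplex $\hat{\nu}\ast\rho$; one checks that the complement is an $NH$-ball (reducing to the already-settled $NH$-ball case) and that $\hat{\nu}\ast\rho$ is attached along its entire boundary, which exhibits the index-$(k+1)$ decomposition after a stellar subdivision, legitimately since the class of $NH$-bouquets is PL-closed.
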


\begin{proof} We note first that $M,B^r\subset M+B^r$ are top generated. Since $M\cap B^r\subseteq\partial B^r$ then $B^r$ is top generated. On the other hand, if $\sigma$ is a principal simplex in $M$ which is not principal in $M+B^r$ then $\sigma$ must be in $M\cap B^r$. Since $\sigma\notin\tilde{\partial}M$ then $\sigma\notin(M\cap B^r)^{\circ}$. Hence, $\sigma$ is not principal in $M\cap B^r$, which contradicts the maximality of $\sigma$ in $M$.

We shall prove the result by induction on $r$. The case $M\cap B^r=\emptyset$ is clear, so let $r\geq 1$ and assume $M\cap B^r\neq\emptyset$. We need to prove that every vertex in $M+B^r$ is regular. It is clear that the vertices in $(M-B^r)+(B^r-M)$ are regular since $B^r$ and $M$ are $NH$-manifolds. Consider then a vertex $v\in M\cap B^r$. We claim that the pair $lk(v,M),lk(v,B^r)$ fulfills the hypotheses of the theorem. Note that $lk(v,M)$ is an $NH$-ball or $NH$-sphere,  $lk(v,B^r)$ is a combinatorial ball, since $v\in M\cap B^r\subseteq\bd{B}^r$, and $lk(v,M\cap B^r)$ is an $NH$-ball or $NH$-sphere contained in $\bd{lk(v,B^r)}$. Note also that the inclusion $(M\cap B^r)^{\circ}\subseteq\pbd M$ implies that $lk(v,M\cap B^r)^{\circ}\subseteq\pbd{lk(v,M)}$. We now check that $lk(v,M\cap B^r)$ is generated by ridges of $lk(v,M)$ or $lk(v,B^r)$. This is easily seen if $lk(v,M\cap B^r)\neq\emptyset$. For the case $lk(v,M\cap B^r)=\emptyset$ we need to show that there is a principal $0$-simplex in $lk(v,M)$ or $lk(v,B^r)$. Now, $lk(v,M\cap B^r)=\emptyset$ implies that $v$ is principal in $M\cap B^r$, so $v\in(M\cap B^r)^{\circ}\subseteq\pbd M$ and $lk(v,M)$ is an $NH$-ball (and hence, collapsible). And since $v\in M\cap B^r\subseteq\bd{B}^r$ then $lk(v,B^r)$ is a ball. Now, if $v$ is a ridge in $B^r$ then $r=1$ and, hence, $lk(v,B^1)=\ast$. If, on the other hand, $v$ is a ridge of $M$ then there exists a principal $1$-simplex $\sigma$ with $v\prec\sigma$. Since $\sigma$ is principal in $M$,  $\ast=lk(v,\sigma)$ is principal in $lk(v,M)$. Since $lk(v,M)$ is collapsible, then $lk(v,M)=\ast$. 

Therefore, by induction, $lk(v,M+B^r)$ is an $NH$-manifold. Now, if $lk(v,M\cap B^r)\neq\emptyset$, then $lk(v,M+B^r)$ is an $NH$-ball or an $NH$-sphere if $lk(v,M)$ is an $NH$-ball and it is an $NH$-sphere if $lk(v,M)$ is an $NH$-sphere. If $lk(v,M\cap B^r)=\emptyset$, we showed above that $lk(v,M)=\ast$ and $lk(v,B^r)$ is a ball or $lk(v,B^r)=\ast$ and $lk(v,M)$ is an $NH$-ball. In either case, $lk(v,M+B^r)$ is an $NH$-sphere of homotopy dimension $0$. This proves that $M+B^r$ is an $NH$-manifold.

We prove now the second part of the statement. We proceed by induction on the index $k$. Suppose first that $k=0$, i.e. $M$ is an $NH$-ball. Let $\alpha$ be a subdivision such that $\alpha M\searrow 0$, and extend $\alpha$ to all $M+B^r$. If $M\cap B^r$ is an $NH$-ball we can apply Corollary \ref{coro qball inside ball} to $\alpha(M\cap B^r)\subset \alpha \bd{B}^r$ and find a stellar subdivision $s$ such that $s\alpha B^r\searrow s\alpha(M\cap B^r)$. This implies that $s\alpha(M+B^r)\searrow s\alpha M\searrow 0$ and therefore $M+B^r$ is an $NH$-ball. If $M\cap B^r$ is an $NH$-sphere $S$ with decomposition  $S=G+L$, take any maximal simplex $\tau\in L$ with an immediate face $\sigma$ in $\bd{L}$ and consider the starring $(\tau,\hat{\tau})S$ of $S$ (see Figure \ref{fig:demo_shelling_maintheorem}). Let $\rho=\hat{\tau}\ast\sigma\in (\tau,\hat{\tau})S$. We claim that $(\tau,\hat{\tau})S-\{\rho\}$ is an $NH$-ball. On one hand, it is clear that $((\tau,\hat{\tau})S-\{\rho\})\cap\rho=\bd{\rho}$. On the other hand, $(\tau,\hat{\tau})L-\{\rho,\sigma\}$ is a combinatorial ball because it is PL-homeomorphic to $L$. Since $G$ is an $NH$-ball, $(\tau,\hat{\tau})L-\{\rho,\sigma\}$ is a combinatorial ball and $G\cap ((\tau,\hat{\tau})L-\{\rho,\sigma\})=\bd{L}-\{\sigma\}$, which is a combinatorial ball by Newman's Theorem, it follows that  $(\tau,\hat{\tau})S-\{\rho\}$ is an $NH$-ball, as claimed. Now, since $\tau\in L\subset M\cap B^r$ is principal then it must be a ridge of $M$ or of $B^r$. We analyze both cases. Suppose $\tau$ is a ridge of $B^r$ and let $\tau\prec\eta\in B^r$. Write $\eta=w\ast\tau$ (see Figure \ref{fig:demo_shelling_maintheorem}). Note that the starring $(\tau,\hat{\tau})S$ performed earlier also subdivides $\eta$ and the simplex $\rho$ lies in the boundary of $(\tau,\hat{\tau})\eta$. Consider the simplex $\nu=w\ast\rho$, which is one of the principal simplices in which $\eta$ has been subdivided. Now make the starring $(\nu,\hat{\nu})$ in $(\tau,\hat{\tau})\eta$ (see Figure \ref{fig:demo_shelling_maintheorem}). By removing the simplex $\hat{\nu}\ast\rho$ from $(\nu,\hat{\nu})(\tau,\hat{\tau})B^r$, we obtain a complex which is PL-homeomorphic to $B^r$. Then $$(\nu,\hat{\nu})(\tau,\hat{\tau})B^r-\{\hat{\nu}\ast\rho\}$$ is a combinatorial ball and it intersects $M$ in $(\tau,\hat{\tau})S-\{\rho\}$, which is an $NH$-ball. It follows that $$(\nu,\hat{\nu})(\tau,\hat{\tau})(M+B^r)-\{\hat{\nu}\ast\rho\}=(\tau,\hat{\tau})M+(\nu,\hat{\nu})(\tau,\hat{\tau})B^r-\{\hat{\nu}\ast\rho\}$$
is again an $NH$-ball. If we now plug the simplex $\hat{\nu}\ast\rho$, $(\nu,\hat{\nu})(\tau,\hat{\tau})(M+B^r)$ is an $NH$-sphere by definition. This completes the case where $\tau$ is a ridge of $B^r$. The case that $\tau$ is a ridge of $M$ is analogous.

\begin{figure}[h]
\centering
\includegraphics[width=6.00in,height=1.00in]{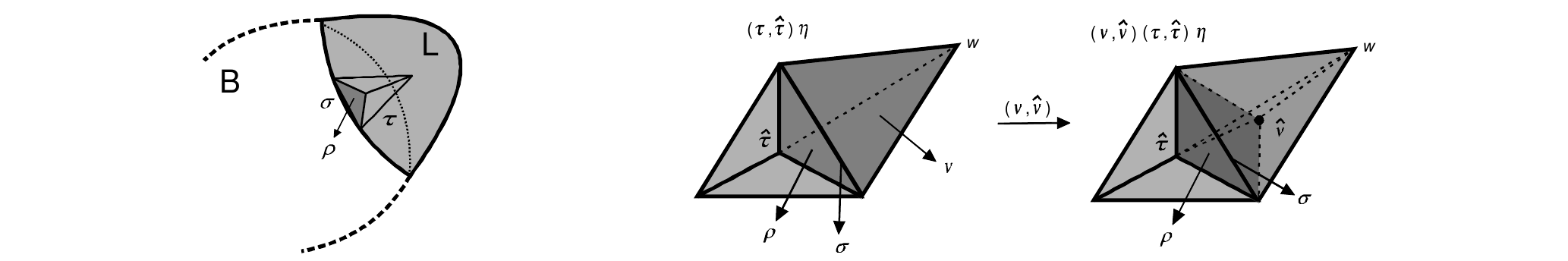}
\caption{The starrings of Theorem \ref{maintheorem}.}
\label{fig:demo_shelling_maintheorem}
\end{figure}

Suppose now that $M$ is an $NH$-bouquet of index $k\geq 1$. Write $M=G+L$ with $G$ an $NH$-bouquet of index $k-1$ and $L$ a combinatorial ball glued to $G$ along its entire boundary. If $r=0$ we obtain an $NH$-bouquet. Suppose then that  $M\cap B^r\neq\emptyset$. We claim that $B^r\cap L\subseteq\partial L$. Suppose $(L-\partial L) \cap B^r\neq\emptyset$ and let $\eta\in (L-\partial L)\cap B^r$. Now, $lk(\eta,M)=lk(\eta,L)$ is a combinatorial sphere and Corollaries \ref{coro boundaryless in nhmanifold} and \ref{coro nhbouquet cannot sphere} imply that $lk(\eta,B^r)\subset lk(\eta,M)$. But if $\tau\in B^r$ is a principal simplex containing $\eta$ then $lk(\eta,\tau)\in lk(\eta,M)$ and $\tau\in M\cap B^r\subseteq\partial B^r$, contradicting the maximality of $\tau$ in $B^r$. This proves that $B^r\cap L\subseteq\partial L$ and, therefore $M\cap B^r=G\cap B^r$. Also, $(G\cap B^r)^{\circ}\subseteq\tilde{\partial} M=\tilde{\partial} G-L\subset\tilde{\partial} G$. By induction, $G+B^r$ is an $NH$-bouquet of index $k-1$ (if $G\cap B^r=M\cap B^r$ is an $NH$-ball) or $k$ (if $G\cap B^r=M\cap B^r$ is an $NH$-sphere). In either case, $M+B^r=G+L+B^r=(G+B^r)+L$ with $(G+B^r)\cap L=G\cap L + B^r\cap L=\partial L$. Thus, $M+B^r$ is an $NH$-bouquet of index $k$ or $k+1$. This completes the proof.\end{proof}

Note that the previous theorem generalizes Alexander's Theorem on regular expansions (\cite[Theorem 3.9]{Lic}) to the non-homogeneous setting. The condition $(M\cap B)^{\circ}\subset\pbd M$ corresponds to $M\cap B\subset\bd{M}$ in the homogeneous case. We next extend the notion of regular expansion to the non-homogeneous context. This will be used to characterize the notion of shelling on $NH$-manifolds similarly as in the case of manifolds.

\begin{defi} A \emph{regular expansion} on an $NH$-manifold $M$ is a geometrical expansion $M\rightarrow M+B$ (i.e. $B$ is a ball and $M\cap B\subset\partial B$ is a ball of dimension $\dim{B}-1$) such that $(M\cap B)^{\circ}\subset\tilde{\partial}M$.\end{defi}

Recall that an inverse shelling in a combinatorial $n$-manifold $M$ corresponds to a (classical) regular expansion $M\rightarrow M+\sigma$ involving a single $n$-simplex $\sigma$. An elementary shelling is the inverse move \cite{Whi}. We investigate now shellable $NH$-balls. First we need the following result.

\begin{prop}\label{prop special inverse of maintheorem} Let $M\rightarrow M+B$ be a geometrical expansion in an $NH$-manifold $M$. If $M+B$ is an $NH$-manifold and $M,B\subset M+B$ are top generated then $(M\cap B)^{\circ}\subset\tilde{\partial}M$ (i.e. $M\rightarrow M+B$ is a regular expansion). \end{prop}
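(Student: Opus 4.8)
The plan is to argue by contrapositive: assuming $M\to M+B$ is a geometrical expansion with $M$, $M+B$ both $NH$-manifolds and $M$, $B$ top generated in $M+B$, I will show that every simplex in $(M\cap B)^\circ$ (i.e.\ every internal simplex of $M\cap B$) has an $NH$-ball link in $M$, hence lies in $\pbd M$. Recall that by the definition of geometrical expansion, $B=B^{r}$ is a combinatorial $r$-ball and $M\cap B=B^{r-1}$ is a combinatorial $(r-1)$-ball lying in $\partial B$; top-generatedness of $B$ forces $r\le n=\dim M$ and each $(r-1)$-simplex of $B^{r-1}$ is a ridge of $B$.

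The key computation is the link formula. Fix $\sigma\in (M\cap B)^\circ$, say $\dim\sigma=j$. Since $M$ and $B$ are top generated and $M\cap B\subseteq\partial B$, we have
\[
lk(\sigma,M+B)=lk(\sigma,M)+lk(\sigma,B),\qquad lk(\sigma,M)\cap lk(\sigma,B)=lk(\sigma,M\cap B).
\]
Here $lk(\sigma,B)$ is a combinatorial $(r-1-j)$-ball and $lk(\sigma,M\cap B)$ is a combinatorial $(r-2-j)$-ball sitting in $\partial\, lk(\sigma,B)$ (these are standard facts for combinatorial manifolds, since $\sigma$ is internal in the $(r-1)$-ball $M\cap B$). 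Now $lk(\sigma,M+B)$ is an $NH$-ball or $NH$-sphere because $M+B$ is an $NH$-manifold, and $lk(\sigma,M)$ is an $NH$-ball or $NH$-sphere because $M$ is. I want to conclude that $lk(\sigma,M)$ is in fact an $NH$-ball. Suppose instead $lk(\sigma,M)$ is an $NH$-sphere $S$. If $\sigma$ were internal in $M$, then $lk(\sigma,M)=S$ is an $NH$-sphere and, since $\sigma$ is internal in $M\cap B$, the ball $lk(\sigma,M\cap B)\subseteq\partial\,lk(\sigma,B)$ meets $S$ only along $lk(\sigma,M\cap B)$; gluing the extra ball $lk(\sigma,B)$ onto the $NH$-sphere $S$ along the proper subball $lk(\sigma,M\cap B)$ of its boundary would produce $lk(\sigma,M+B)$. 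The point is that this cannot be an $NH$-ball or $NH$-sphere: by Corollary~\ref{coro nhbouquet cannot sphere} a non-homogeneous $NH$-sphere of homotopy dimension $0$ cannot absorb a top-generated combinatorial manifold without boundary, and more directly, an $NH$-sphere has $\pbd S=\pbd G-L$ by Lemma~\ref{lemma border of q-spheres}, so its boundary is already ``saturated''; attaching $lk(\sigma,B)$ along only part of the boundary of the combinatorial-ball piece $lk(\sigma,M\cap B)$ destroys the $NH$-sphere decomposition and also fails to yield a collapsible complex with the right link structure. Carrying this out cleanly — exhibiting a vertex of $lk(\sigma,M+B)$ whose link is neither an $NH$-ball nor an $NH$-sphere, or else a dimension/homology obstruction — is the crux of the argument.

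The cleanest route is probably induction on $r=\dim B$, paralleling the proof of Theorem~\ref{maintheorem}. For $r=0$ there is nothing internal to check (or $M\cap B=\emptyset$). For the inductive step, fix $\sigma\in (M\cap B)^\circ$ and a vertex $v\prec\sigma$; then $lk(\sigma,M)=lk(lk(v,\sigma),lk(v,M))$ and the pair $lk(v,M),lk(v,B)$ again satisfies the hypotheses — $lk(v,M+B)$ and $lk(v,M)$ are $NH$-balls/spheres, $lk(v,M\cap B)\subseteq\partial\,lk(v,B)$ is a combinatorial ball, and $lk(v,M),lk(v,B)$ are top generated in $lk(v,M+B)$ — so by induction $lk(\bar\sigma,lk(v,M))$ is an $NH$-ball, where $\bar\sigma=lk(v,\sigma)\in (lk(v,M)\cap lk(v,B))^\circ$. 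Hence $lk(\sigma,M)$ is an $NH$-ball and $\sigma\in\pbd M$, which is what we wanted. This reduces the whole problem to the base of the induction, namely the case $\sigma$ a ridge of $B$ with $lk(\sigma,M\cap B)$ a single vertex, where $lk(\sigma,B)=\ast$ and one must rule out $lk(\sigma,M)$ being a non-homogeneous $NH$-sphere $B'+\ast$: but then $lk(\sigma,M+B)=(B'+\ast)+\ast$ with the two added points identified only if they coincide — if they do coincide, $M\cap B$ was not a $(r-1)$-ball near $\sigma$; if they do not, $lk(\sigma,M+B)$ has an isolated extra structure making it not collapsible and not an $NH$-sphere, contradicting that $M+B$ is an $NH$-manifold.

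The main obstacle I anticipate is precisely this base case bookkeeping: making rigorous the claim that one cannot glue an extra ball onto an $NH$-sphere along a proper part of the boundary of its combinatorial-ball summand and still land inside the class of $NH$-balls/$NH$-spheres. I expect this to follow from combining Lemma~\ref{lemma border of q-spheres} (which pins down $\pbd$ of an $NH$-sphere), Corollary~\ref{coro boundaryless in nhmanifold} and Corollary~\ref{coro nhbouquet cannot sphere} (which forbid absorbing boundaryless combinatorial manifolds), together with Proposition~\ref{prop boundary homogeneous} to detect non-homogeneity through the boundary; if a direct combinatorial contradiction proves elusive, a homological argument comparing $H_*(lk(\sigma,M+B))$ with the possible homotopy types of $NH$-balls and $NH$-spheres (which are contractible, resp.\ homotopy $k$-spheres) should close the gap.
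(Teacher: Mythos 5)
There is a genuine gap here, and it stems from a mis-identification of the key geometric fact. You assert that for $\sigma\in(M\cap B)^{\circ}$ the link $lk(\sigma,M\cap B)$ is a combinatorial \emph{ball} sitting as a \emph{proper} part of $\bd\, lk(\sigma,B)$, and you then try to rule out gluing $lk(\sigma,B)$ onto the $NH$-sphere $lk(\sigma,M)$ along a proper subball of its boundary. But since $\sigma$ is \emph{internal} in the $(r-1)$-ball $M\cap B$, its link there is a combinatorial \emph{sphere} of dimension $r-2-\dim\sigma$, and a combinatorial sphere contained in the combinatorial sphere $\bd\, lk(\sigma,B)$ of the same dimension must equal all of it. This is exactly what the paper exploits: $lk(\sigma,M)\cap lk(\sigma,B)=lk(\sigma,M\cap B)=\bd\, lk(\sigma,B)$, so if $lk(\sigma,M)$ were an $NH$-sphere then $lk(\sigma,M+B)=lk(\sigma,M)+lk(\sigma,B)$ would be an $NH$-bouquet of index $2$ (a combinatorial ball glued to an $NH$-sphere along its \emph{entire} boundary, the two links being top generated because $M$ and $B$ are), hence homotopy equivalent to a wedge of two spheres and therefore neither an $NH$-ball nor an $NH$-sphere, contradicting Proposition \ref{prop regularity of NH-manifolds}. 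This closes the argument in one step, with no induction and no base case.

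Your picture, besides being incorrect, would actually defeat the very obstruction you hope to find: attaching a $d$-ball to a complex along a $(d-1)$-ball contained in its boundary is by definition a geometrical expansion and preserves the homotopy type, so no homological comparison of $lk(\sigma,M+B)$ with the homotopy types of $NH$-balls and $NH$-spheres could detect anything in that situation. The induction on $r$ you sketch only relocates the difficulty to a base case in which $lk(\sigma,M\cap B)$ is a single vertex, but that case never occurs for internal $\sigma$ (a single vertex is a $0$-ball, not a sphere, so such a $\sigma$ would lie in $\pbd(M\cap B)$), and you explicitly leave it unresolved. The missing idea is the identification $lk(\sigma,M\cap B)=\bd\, lk(\sigma,B)$ together with recognizing the resulting complex as an $NH$-bouquet of index $2$, which by Proposition \ref{corollary homotopy type of q-bouquets} cannot occur as a link in an $NH$-manifold.
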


\begin{proof} Take $\rho\in(M\cap B)^{\circ}$. Since $lk(\rho,M\cap B)$ is a sphere contained in the sphere $\bd{lk(\rho,B)}$, then $lk(\rho,M\cap B)=\bd{lk(\rho,B)}$. Suppose $\rho\notin \pbd M$. Then $lk(\rho,M+B)=lk(\rho,M)+lk(\rho,B)$ is an $NH$-bouquet of index $2$ since $lk(\rho,M),lk(\rho,B)\subset lk(\rho,M+B)$ are top generated by hypothesis. This contradicts the fact that $M+B$ is an $NH$-manifold.\end{proof}

\begin{defi} Let $M$ be an $NH$-manifold. An \emph{inverse shelling} is a regular expansion $M\rightarrow M+\sigma$ where $\sigma$ is a single simplex. An \emph{elementary shelling} is the inverse move.\end{defi}

By Proposition \ref{prop special inverse of maintheorem} and Theorem \ref{thm main of shellable NH-manifolds}, we obtain the following characterization of shellable $NH$-balls in terms of elementary shellings.

\begin{coro} An $NH$-ball $B$ is shellable if and only if $B$ can be transformed into a single maximal simplex by a sequence of elementary shellings.\end{coro}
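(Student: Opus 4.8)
The plan is to derive both implications from Theorem~\ref{thm main of shellable NH-manifolds} and Proposition~\ref{prop special inverse of maintheorem}, the point being that an inverse shelling is just a regular expansion by a single simplex, and a regular expansion can be \emph{recognized} from the fact that the result is an $NH$-manifold in which the two pieces are top generated.

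For the ``only if'' direction, I would fix a shelling order $F_1,\dots,F_t$ of the $NH$-ball $B$. Since an $NH$-ball has no spanning simplices, Theorem~\ref{thm main of shellable NH-manifolds} gives that each $\mathcal{F}_l=F_1+\cdots+F_l$ is an $NH$-bouquet of index $0$, i.e.\ an $NH$-ball; in particular $\mathcal{F}_1=F_1$ is a single maximal simplex. The main work is to check that each step $\mathcal{F}_{l-1}\to\mathcal{F}_l=\mathcal{F}_{l-1}+F_l$ ($2\le l\le t$) is an inverse shelling. The non-routine point is that the restriction $R_l:=F_l\cap\mathcal{F}_{l-1}$ is a genuine combinatorial $(\dim F_l-1)$-ball inside $\bd{F}_l$: the shelling condition gives that $R_l$ is pure of dimension $\dim F_l-1$, the fact that $F_l$ is not spanning gives $R_l\subsetneq\bd{F}_l$, and connectedness of the $NH$-ball $\mathcal{F}_l$ gives $R_l\ne\emptyset$; hence $R_l$ is the union of between one and $\dim F_l$ facets of the simplex $F_l$, and such a union is a combinatorial ball (all these facets contain a common vertex, so $R_l$ is a cone, and one finishes by induction on dimension using that a cone on a combinatorial ball or sphere is a combinatorial ball). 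Thus $\mathcal{F}_{l-1}\to\mathcal{F}_l$ is a geometrical expansion between $NH$-manifolds in which $\mathcal{F}_{l-1}$ and $F_l$ are top generated (every $F_i$ is maximal in $B$, hence in $\mathcal{F}_l$), so Proposition~\ref{prop special inverse of maintheorem} shows it is a regular expansion; being by a single simplex it is an inverse shelling. Reversing the chain $\mathcal{F}_1\to\cdots\to\mathcal{F}_t=B$ exhibits $B$ being taken to the single maximal simplex $\mathcal{F}_1$ by elementary shellings.

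For the ``if'' direction I would reverse a given sequence of elementary shellings to obtain inverse shellings $P_0\to P_1\to\cdots\to P_m=B$, with $P_0$ a single simplex and $P_j=P_{j-1}+\sigma_j$ where $\sigma_j$ is a single simplex added by a regular expansion; each $P_j$ is an $NH$-manifold by Theorem~\ref{maintheorem}. The key observation is that once a simplex is maximal in the current complex it stays maximal: if $\rho$ were maximal in some $P_i$ but first got covered at a step $P_{j-1}\to P_{j-1}+\sigma_j$ (so $\rho$ is still maximal in $P_{j-1}$ and $\rho<\sigma_j$), then $\rho\in P_{j-1}\cap\sigma_j\subseteq\bd{\sigma}_j$, and being maximal in $P_{j-1}$ it is a top simplex of the $(\dim\sigma_j-1)$-ball $P_{j-1}\cap\sigma_j$, hence $\rho\in(P_{j-1}\cap\sigma_j)^{\circ}\subseteq\pbd P_{j-1}$, forcing $lk(\rho,P_{j-1})$ to be an $NH$-ball; but $lk(\rho,P_{j-1})=\{\emptyset\}$ is the $(-1)$-sphere, a contradiction. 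Consequently each $\sigma_j$ (and the simplex of $P_0$) is maximal in $P_{j-1}+\sigma_j$ and hence in $B$, and every maximal simplex of $B$ occurs in the list exactly once; relabelling the maximal simplices of $B$ in order of appearance as $F_1,\dots,F_t$ gives $P_j=F_1+\cdots+F_{j+1}$, and the regular-expansion condition at each step says exactly that $(F_1+\cdots+F_j)\cap F_{j+1}$ is pure of dimension $\dim F_{j+1}-1$, i.e.\ that $F_1,\dots,F_t$ is a shelling order of $B$.

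The two places where the $NH$-structure is really used, and hence the main obstacles, are: in the ``only if'' direction, upgrading the shelling restriction $R_l$ from ``pure of codimension one'' to ``combinatorial ball'' (the facet-union/cone argument), which is what makes $\mathcal{F}_{l-1}\to\mathcal{F}_l$ a geometrical expansion so that Proposition~\ref{prop special inverse of maintheorem} applies; and in the ``if'' direction, the ``stays maximal'' lemma, which is what guarantees the bookkeeping matches (no maximal simplex of $B$ is ever temporarily hidden and no spurious lower-dimensional generator slips into a restriction). The remainder is a direct assembly of Theorems~\ref{thm main of shellable NH-manifolds} and \ref{maintheorem} with Proposition~\ref{prop special inverse of maintheorem}.
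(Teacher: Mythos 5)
Your argument is correct and follows the same route the paper intends: the paper states this corollary as an immediate consequence of Theorem~\ref{thm main of shellable NH-manifolds} (the partial unions $\mathcal{F}_l$ are $NH$-balls) and Proposition~\ref{prop special inverse of maintheorem} (recognizing each step as a regular expansion), which is exactly the skeleton you flesh out. The details you supply — that a proper nonempty union of facets of a simplex is a combinatorial ball, and the ``stays maximal'' observation via $(P_{j-1}\cap\sigma_j)^{\circ}\subseteq\pbd P_{j-1}$ — are correct fill-ins of steps the paper leaves implicit.
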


A \emph{stellar exchange} $\kappa(\sigma,\tau)$ is the move that transforms a complex $M$ into a new complex $\kappa(\sigma,\tau)M$ by replacing $st(\sigma,M)=\sigma\ast\partial\tau\ast L$ with $\partial\sigma\ast\tau\ast L$, for $\sigma\in M$ and $\tau\notin M$ (see \cite{Lic,Pach}). Note that elementary starrings and welds are particular cases of stellar exchanges (when $\tau$ or $\sigma$ is a vertex). When $L=\emptyset$ the stellar exchange is called a \emph{bistellar move}. Also, since $\kappa(\sigma,\tau)=(\tau,b)^{-1}(\sigma,a)$, two simplicial complexes are PL-homeomorphic if and only if they are related by a sequence of stellar exchanges. In the case of PL-homeomorphic combinatorial manifolds without boundary, all the moves in this sequence can be taken to be bistellar moves (see \cite{Lic,Pach} for more details). This discussion motivates the following definition.

\begin{defi} Let $M$ be a combinatorial $n$-manifold and let $\sigma\in M$ be a simplex such that $lk(\sigma,M)=\partial\tau\ast L$ with $\tau\notin M$. An \emph{$NH$-factorization} is the move $M\rightarrow M+\sigma\ast\tau\ast L$. We write $F(\sigma,\tau)M=M+\sigma\ast\tau\ast L$. When $L=\emptyset$, we call it a \emph{bistellar factorization}.\end{defi}

Note that, in fact, $NH$-factorizations can be defined for arbitrary complexes. When $\tau$ is a single vertex $b\notin M$, we will denote $M_{\sigma}^{+}=F(\sigma,b)M$. Note that $M_{\sigma}^{+}$ is  the simplicial cone of the inclusion $st(\sigma,M)\subseteq M$. Note also that, since $st(\sigma,M)$ is collapsible,  $M_{\sigma}^+\searrow M$.

By definition, the following diagram commutes (this justifies the term ``factorization").

\begin{equation*}\xymatrix{ M \ar[rr]^{\kappa(\sigma,\tau)} \ar[dr]_{F(\sigma,\tau)} & & \kappa(\sigma,\tau) M \ar[ld]^{F(\tau,\sigma)} \\
 & M+\sigma\ast\tau\ast L & }. \end{equation*}

\begin{prop}\label{ultimaprop} Let $M$ be a combinatorial $n$-manifold and let $M\longrightarrow N=F(\sigma,\tau)M$ be an $NH$-factorization. Then $N$ is an $NH$-manifold.\end{prop}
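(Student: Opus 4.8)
The plan is to view $N$ as $M$ with a single combinatorial ball attached and then to induct on $n=\dim M$, strengthening the statement to: \emph{$F(\sigma,\tau)M$ is an $NH$-manifold, and it is an $NH$-ball or an $NH$-sphere whenever $M$ is a combinatorial ball or sphere} (the proposition being the first half). Put $k=\dim\sigma$ and recall $lk(\sigma,M)=\partial\tau\ast L$, where $L$ is a combinatorial ball or sphere. Since $lk(\sigma,M)$ has dimension $n-k-1$, the join $B:=\sigma\ast\tau\ast L$ is a combinatorial ball of dimension $n+1$. A face inspection shows that a simplex $\sigma_0\ast\tau_0\ast\ell_0$ of $B$ lies in $M$ exactly when $\tau_0\ne\tau$ (otherwise $\tau$ would be a face of a simplex of $M$), so $M\cap B=\sigma\ast\partial\tau\ast L=st(\sigma,M)$; this is a collapsible complex contained in $\partial B$ and generated by ridges of $B$ (each of its maximal simplices is an immediate face of a maximal simplex $\sigma\ast\tau\ast\lambda$ of $B$), so $N=M+B$ is a geometrical expansion. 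Note that $(M\cap B)^{\circ}\not\subseteq\pbd M$ in general --- for instance $\sigma$ is internal in $st(\sigma,M)$ but need not lie in $\pbd M$ --- so Theorem \ref{maintheorem} cannot be applied directly.

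First I would show $N$ is an $NH$-manifold. Every vertex of $N$ lies in $M$ except possibly $\tau$ when $\dim\tau=0$, and $lk(v,N)=lk(v,M)+lk(v,B)$ with $lk(v,M)\cap lk(v,B)=lk(v,st(\sigma,M))$. If $v\notin B$ this equals $lk(v,M)$. If $\dim\tau=0$ and $v=\tau$, then $lk(\tau,M)=\emptyset$ and $lk(v,N)=lk(\tau,B)=\sigma\ast L$, a combinatorial ball; if $\dim\sigma=0$ and $v=\sigma$, then $lk(v,N)=(\partial\tau\ast L)+(\tau\ast L)=\tau\ast L$, again a combinatorial ball. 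In the remaining positions one checks, using $lk(\eta_1\ast\eta_2,K)=lk(\eta_1,lk(\eta_2,K))$, that $lk(v,N)$ is an $NH$-factorization of the combinatorial $(n-1)$-ball or $(n-1)$-sphere $lk(v,M)$: for $v\in\sigma$, writing $\sigma=v\ast\sigma''$ one gets $lk(v,N)=F(\sigma'',\tau)(lk(v,M))$; for $v\in\tau$, writing $\tau=v\ast\tau''$ one gets $lk(v,N)=F(\sigma,\tau'')(lk(v,M))$; and for $v$ a vertex of $L$ one gets $lk(v,N)=F(\sigma,\tau)(lk(v,M))$, now with $L$ replaced by $lk(v,L)$, the required identity being $lk(\sigma,lk(v,M))=lk(v\ast\sigma,M)=lk(v,\partial\tau\ast L)=\partial\tau\ast lk(v,L)$. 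By the inductive hypothesis every such link is an $NH$-ball or $NH$-sphere, so $N$ is an $NH$-manifold.

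Next, the ball and sphere cases. If $M$ is a combinatorial $n$-ball, Corollary \ref{coro qball inside ball} applied to the collapsible complex $M\cap B=st(\sigma,M)\subseteq\partial B$ gives a stellar subdivision $s$ with $sB\searrow s(M\cap B)$; extending $s$ to $N$ yields $sN\searrow sM$, and $M$ is collapsible, so $N$ is a collapsible $NH$-manifold, hence an $NH$-ball. If $M$ is a combinatorial $n$-sphere, then $st(\sigma,M)$ is a proper subcomplex of $M$ (a sphere is not collapsible), so $M$ has an $n$-simplex $\delta\notin st(\sigma,M)$; by Newman's theorem $M=B_0+\delta$ with $B_0=\overline{M-\delta}$ a combinatorial $n$-ball and $B_0\cap\delta=\partial\delta$, and $st(\sigma,M)\subseteq B_0$, so $lk(\sigma,B_0)=lk(\sigma,M)$ and $N=(B_0+\sigma\ast\tau\ast L)+\delta=F(\sigma,\tau)B_0+\delta$. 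By the $n$-dimensional ball case just proved, $F(\sigma,\tau)B_0$ is an $NH$-ball; both it and $\delta$ are top generated in $N$, and $\delta\cap B\subseteq st(\sigma,M)$ together with $\delta\notin st(\sigma,M)$ forces $F(\sigma,\tau)B_0\cap\delta=\partial\delta$. Hence $N$ is an $NH$-sphere, closing the induction.

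The routine part is the verification of the link identities above, together with the degenerate positions ($\sigma$ or $\tau$ a single vertex, $L$ empty, or the extreme case $\sigma=\emptyset$, in which $M=\partial\tau\ast L$ and $N=\tau\ast L$ is just a combinatorial ball). The genuine obstacle I anticipate is the sphere case: the naive decompositions of $F(\sigma,\tau)M$, such as taking $\sigma\ast\tau\ast L$ itself for the combinatorial ball of the decomposition, do not exhibit it as an $NH$-sphere; the fix is to perform the Newman splitting of $M$ \emph{before} applying the factorization, choosing the top simplex outside $st(\sigma,M)$ so that $F$ leaves it untouched and the complement is a single $NH$-ball.
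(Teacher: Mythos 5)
Your argument is correct and is essentially the paper's own proof: an induction on $\dim M$ through vertex links (each vertex link of $N$ being again an $NH$-factorization of a lower-dimensional combinatorial ball or sphere), with collapsibility settling the ball case and a Newman splitting settling the sphere case. The only technical divergence is that the paper first applies the starring $(\tau,b)$ and the PL-invariance of Theorem \ref{thm pl-invariancy of NH-manifolds} to replace $N$ by the cone $M_{\sigma}^{+}=M+b\ast st(\sigma,M)$, so that the attached ball collapses freely onto $M\cap B$ and the subdivision machinery of Corollary \ref{coro qball inside ball} that you invoke is not needed; it then runs the ball/sphere dichotomy inside the links (according to whether $v\in\partial M$) rather than at the top level as you do.
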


\begin{proof} Let $N=M+\sigma\ast\tau\ast L$ with $\tau\notin M$. Since $(\tau,b)N=M+b\ast\partial\tau\ast\sigma\ast L=M_{\sigma}^{+}$, by Theorem \ref{thm pl-invariancy of NH-manifolds} it suffices to prove that $M_{\sigma}^+$ is an $NH$-manifold. We prove by induction on $n$ that the simplicial cone $M_B^+$ of the inclusion of any combinatorial ball $B\subseteq M$ is an $NH$-manifold.

Denote $M_B^+=M+b\ast B$ and let $v$ be a vertex of $M_B^+$. If $v\notin B$, then $lk(v,M_B^+)=lk(v,M)$. If $v\in \overset{\circ}{B}$ then $lk(v,M_B^+)=b\ast lk(v,M)$, which is a combinatorial $n$-ball. If $v\in\partial B$ then $lk(v,M_B^+)=lk(v,M)+b\ast lk(v,B)$ is an $NH$-manifold by induction. Since $lk(v,B)$ is collapsible then $lk(v,M_B^+)\searrow lk(v,M)$, so $lk(v,M_B^+)$ is an $NH$-ball if $v\in\partial M$. If $v\notin\partial M$ then $lk(v,B)$ is strictly contained in $lk(v,M)$. It follows that there is an $n$-simplex $\eta\in M-B$ containing $v$. By Newman's Theorem, $lk(v,M)-lk(v,\eta)$ is an ($n-1$)-ball. It follows that $lk(v,M_B^+)$ is an $NH$-sphere with decomposition $$(lk(v,M-\eta)+b\ast lk(v,B))+lk(v,\eta)$$ since $lk(v,M-\eta)+b\ast lk(v,B)$ is an $NH$-ball by the previous case and $$(lk(v,M-\eta)+b\ast lk(v,B))\cap lk(v,\eta)=(lk(v,M)-lk(v,\eta))\cap lk(v,\eta)=\bd{lk(v,\eta)}.$$\end{proof}

\begin{lema}\label{lemma borde de bolas coincide} Let $M_1,M_2$ be combinatorial $n$-manifolds without boundary and let $B_i\subset M_i$ be combinatorial $n$-balls. Suppose $\overline{M_1-B_1}=\overline{M_2-B_2}$. Then, $M_1\simeq_{PL} M_2$.\end{lema}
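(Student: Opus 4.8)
The plan is to realise a single complex as the common ``complement'' of the two balls and show that capping it off produces a manifold whose $PL$ type is independent of the cap. Write $C=\overline{M_1-B_1}=\overline{M_2-B_2}$ (the subcomplex generated by the $n$-simplices not in $B_i$). The first step is the structural observation that, for each $i$, $C$ is a combinatorial $n$-manifold with $M_i=C+B_i$, $C\cap B_i=\partial B_i=\partial C$, and $\partial C$ is a combinatorial $(n-1)$-sphere. This is routine: since $M_i$ is closed, an $(n-1)$-simplex of $\partial B_i$ lies in exactly one $n$-simplex of $B_i$, hence in exactly one $n$-simplex of $C$, so $\partial B_i\subseteq\partial C$; conversely an $(n-1)$-simplex of $\partial C$ meets $B_i$ in exactly one $n$-simplex, so it lies in $\partial B_i$; an interior simplex of $B_i$ cannot lie in $C$, because $lk(\sigma,B_i)$ would then be a combinatorial sphere properly contained in the combinatorial sphere $lk(\sigma,M_i)$ of the same dimension, which is impossible by the pseudomanifold property; and $lk(\sigma,C)=\overline{lk(\sigma,M_i)-lk(\sigma,B_i)}$ is a ball or a sphere by Newman's Theorem. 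The crucial consequence is that the \emph{same} sphere arises from both sides: $\partial B_1=\partial C=\partial B_2=:S$.

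Next I reduce to a gluing statement, routing the ``twisting'' through the ball (a naive attempt to extend a self-homeomorphism of $\partial C$ over $C$ would fail, since $C$ is arbitrary). Since $B_1$ and $B_2$ are combinatorial $n$-balls, fix a $PL$-homeomorphism $g\colon B_1\to B_2$; it carries $\partial B_1=S$ onto $\partial B_2=S$, so $\psi:=g|_S\colon S\to S$ is a $PL$ self-homeomorphism of $S$. Let $C\cup_\psi B_1$ denote the $PL$ manifold obtained by pasting $B_1$ to $C$ along $\psi$ (i.e. identifying $x\in\partial B_1=S$ with $\psi(x)\in S=\partial C$); this is a legitimate $PL$ pasting along a $PL$-homeomorphism of codimension-zero boundary pieces. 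The map that is the inclusion on $C$ and $g$ on $B_1$ descends to a $PL$-homeomorphism $C\cup_\psi B_1\to M_2=C+B_2$ (using $M_2=C+B_2$, $C\cap B_2=S$, and $g|_S=\psi$). On the other hand $C\cup_{\mathrm{id}_S}B_1$ is literally $C+B_1=M_1$, since $S$ is a common subcomplex. So it suffices to show $C\cup_{\mathrm{id}_S}B_1\simeq_{PL}C\cup_\psi B_1$.

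This is where the Alexander trick enters. Because $B_1$ is a combinatorial $n$-ball it is $PL$-homeomorphic to $\Delta^n$, and hence (transporting the coning-from-the-barycenter construction across that $PL$-homeomorphism) every $PL$ self-homeomorphism of $\partial B_1=S$ extends to a $PL$ self-homeomorphism of $B_1$. Let $\theta\colon B_1\to B_1$ be such an extension of $\psi^{-1}$. Then $\mathrm{id}_C\cup\theta$ is a well-defined $PL$-homeomorphism $C\cup_{\mathrm{id}_S}B_1\to C\cup_\psi B_1$: a point $x\in S\subseteq B_1$ is glued to $x$ on the source, and its image $\theta(x)$ is glued to $\psi(\theta(x))=\psi(\psi^{-1}(x))=x$ on the target, which matches $\mathrm{id}_C(x)$. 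Composing the identifications yields $M_1=C\cup_{\mathrm{id}_S}B_1\simeq_{PL}C\cup_\psi B_1\simeq_{PL}M_2$, as desired. (The degenerate case $n=0$ is immediate: then $S=\emptyset$ and $M_i=C\sqcup\{\mathrm{pt}\}$.)

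The genuinely delicate points are exactly the two that guarantee we stay in the $PL$ category rather than merely the topological one: that coning a $PL$ self-homeomorphism of $\partial\Delta^n$ yields a $PL$ self-homeomorphism of $\Delta^n$ (the $PL$ Alexander trick), and that the pasting $C\cup_\psi B_1$ is itself a $PL$ manifold realised as the evident pushout. Both are standard facts of $PL$ topology (see \cite{Gla,Lic}). Everything else -- that $C$ is a combinatorial manifold with $\partial C=\partial B_i$, and the bookkeeping showing the glued maps are well defined -- uses only Newman's Theorem and the pseudomanifold property, as sketched above.
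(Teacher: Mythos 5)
Your proof is correct and takes essentially the same route as the paper's: both arguments first establish that the common complement is a combinatorial $n$-manifold whose boundary coincides with $\partial B_1$ and with $\partial B_2$, and then exchange one ball for the other relative to that common boundary. Your explicit pasting-plus-Alexander-trick step is precisely the content of the paper's one-line appeal to the fact that ``every ball may be starred'' (\cite[Theorem II.11]{Gla}).
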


\begin{proof} Note that $\overline{M_i-B_i}$ is a combinatorial $n$-manifold and  that $\partial B_i=\overline{M_i-B_i}\cap B_i$. Since $\partial B_2=\overline{M_2-B_2}\cap B_2=\overline{M_1-B_1}\cap B_2$ and $M_2=\overline{M_2-B_2}+B_2=\overline{M_1-B_1}+B_2$, then $B_2\cap\overline{M_1-B_1}\subseteq\partial(\overline{M_1-B_1})=\partial B_1$. Hence, $\partial B_2\subseteq \partial B_1$. Analogously, $\partial B_1\subseteq\partial B_2$. The result now follows from the fact that every ball may be starred (see \cite[Theorem II.11]{Gla}).\end{proof}

\begin{teo} Let $M,\tilde{M}$ be combinatorial $n$-manifolds (with or without boundary). If $M$ and $\tilde{M}$ are PL-homeomorphic then there exists a sequence $$M=M_1\rightarrow N_1\leftarrow M_2\rightarrow N_2\leftarrow M_3\rightarrow\cdots\leftarrow M_{r-1}\rightarrow N_{r-1}\leftarrow M_r=\tilde{M}$$ where the $N_i$'s are $NH$-manifolds, the $M_i$'s are $n$-manifolds, and $M_i, M_{i+1}\rightarrow N_i$ are $NH$-factorizations. Moreover, if $M$ and $\tilde{M}$ are closed then the converse holds. Also, in this case the $NH$-factorizations may be taken to be bistellar factorizations.\end{teo}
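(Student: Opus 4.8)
The plan is to prove the two implications separately. For the forward direction, suppose $M$ and $\tilde M$ are PL-homeomorphic combinatorial $n$-manifolds. By the equivalence of the PL and stellar theories, there is a finite sequence of stellar exchanges (equivalently, starrings and welds) connecting $M$ to $\tilde M$; write it as $M=M_1, M_2,\ldots, M_r=\tilde M$ where each $M_{i+1}=\kappa(\sigma_i,\tau_i)M_i$ for suitable $\sigma_i\in M_i$, $\tau_i\notin M_i$. For each such stellar exchange I would invoke the commuting triangle displayed before Proposition \ref{ultimaprop}: setting $N_i=M_i+\sigma_i\ast\tau_i\ast L_i=F(\sigma_i,\tau_i)M_i=F(\tau_i,\sigma_i)M_{i+1}$, we get $M_i\to N_i\leftarrow M_{i+1}$ with both arrows $NH$-factorizations. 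By Proposition \ref{ultimaprop} each $N_i$ is an $NH$-manifold, and the $M_i$ are $n$-manifolds by construction, so this is exactly the required zig-zag.

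For the converse, assume now that $M$ and $\tilde M$ are closed and that we are given a zig-zag $M=M_1\to N_1\leftarrow M_2\to\cdots\leftarrow M_r=\tilde M$ of $NH$-factorizations through $NH$-manifolds $N_i$ with $n$-manifold vertices $M_i$. It suffices to show that a single span $M_i\to N_i\leftarrow M_{i+1}$ forces $M_i\simeq_{PL} M_{i+1}$; transitivity of PL-homeomorphism then finishes. By definition $N_i = M_i + \sigma\ast\tau\ast L = M_{i+1}+\tau'\ast\sigma'\ast L$ arises from a stellar exchange data on each side; the key observation is that, after passing to the cone description, $(\tau,b)N_i = (M_i)_\sigma^+$ and likewise $(\tau,b')N_i=(M_{i+1})_{\sigma'}^+$, so $M_i$ and $M_{i+1}$ are each obtained from PL-homeomorphic complexes ($N_i$ and its starrings) by deleting a combinatorial ball. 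More precisely, $N_i$ collapses to $M_i$ and to $M_{i+1}$ along geometrical collapses that remove combinatorial balls, and since $M_i,M_{i+1}$ are closed $n$-manifolds the pieces removed are combinatorial $n$-balls with complement $N_i$ itself behaving like the union; I would make this precise by extracting top-generated combinatorial $n$-balls $B_i\subset$ (a common subdivision of) $N_i$ such that $\overline{N_i-B_i}$ realizes both $M_i$ and $M_{i+1}$ up to PL-homeomorphism, and then apply Lemma \ref{lemma borde de bolas coincide} to conclude $M_i\simeq_{PL} M_{i+1}$.

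For the last sentence — that when $M,\tilde M$ are closed the factorizations may be taken bistellar — I would rerun the forward argument but start from the theorem of Pachner (cited in the excerpt) that PL-homeomorphic closed combinatorial manifolds are connected by a sequence of bistellar moves, i.e. stellar exchanges $\kappa(\sigma_i,\tau_i)$ with $L_i=\emptyset$. Feeding these into the same commuting triangle gives $N_i=F(\sigma_i,\tau_i)M_i$ with $L_i=\emptyset$, which is precisely a bistellar factorization, and each $N_i$ is still an $NH$-manifold by Proposition \ref{ultimaprop}.

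The main obstacle I anticipate is the converse direction: one must control what a general $NH$-factorization $M_i\to N_i$ does when $M_i$ is an arbitrary closed $n$-manifold (not just a ball), and in particular verify that the ``deleted'' pieces on the two sides of the span are genuinely combinatorial $n$-balls sitting in the boundary of the relevant complement with coinciding boundary spheres, so that Lemma \ref{lemma borde de bolas coincide} applies. This requires carefully reading off, from the cone/starring descriptions $(\tau,b)N_i=(M_i)_\sigma^+$, exactly which simplices are added and removed, using that $st(\sigma,M_i)$ is collapsible and that $M_i$ has no boundary so that the link computations in the proof of Proposition \ref{ultimaprop} identify $lk(v,N_i)$ as the expected $NH$-ball or $NH$-sphere; the closedness hypothesis is essential here and is exactly where the forward direction does not need it.
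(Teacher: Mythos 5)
Your forward direction and your treatment of the bistellar refinement coincide with the paper's argument: take a stellar-exchange sequence (bistellar moves in the closed case, by Pachner), and factor each exchange through the commuting triangle, with Proposition \ref{ultimaprop} guaranteeing that each $N_i$ is an $NH$-manifold. That part is fine.

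The converse, however, has a genuine gap, and it is exactly the step you flag as ``the main obstacle'' without resolving. You correctly identify Lemma \ref{lemma borde de bolas coincide} as the tool, but the missing idea is not a delicate analysis of collapses or common subdivisions; it is the observation that in a span $M\stackrel{F(\sigma,\tau)}\longrightarrow N\stackrel{F(\rho,\eta)}\longleftarrow\tilde{M}$ with $M,\tilde{M}$ closed $n$-manifolds, the two attached cells coincide: $\sigma\ast\tau\ast L=\rho\ast\eta\ast T$. This follows from a dimension count: since $lk(\sigma,M)=\partial\tau\ast L$ is an $(n-\dim\sigma-1)$-sphere, the simplex $\sigma\ast\tau\ast L$ has dimension $n+1$, and by homogeneity of $M$ and $\tilde{M}$ every other principal simplex of $N=M+\sigma\ast\tau\ast L=\tilde{M}+\rho\ast\eta\ast T$ has dimension $n$; hence each of $\sigma\ast\tau\ast L$ and $\rho\ast\eta\ast T$ is the unique $(n+1)$-dimensional principal simplex of $N$ and they must be equal. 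Once this is in hand no subdivision is needed at all: one gets the literal equality of subcomplexes $\overline{M-\sigma\ast\partial\tau\ast L}=\overline{N-\sigma\ast\tau\ast L}=\overline{N-\rho\ast\eta\ast T}=\overline{\tilde{M}-\rho\ast\partial\eta\ast T}$, where $\sigma\ast\partial\tau\ast L=st(\sigma,M)$ and $\rho\ast\partial\eta\ast T=st(\rho,\tilde{M})$ are combinatorial $n$-balls (stars in closed combinatorial manifolds), so Lemma \ref{lemma borde de bolas coincide} applies immediately and gives $M\simeq_{PL}\tilde{M}$. Your proposed route --- extracting ``top-generated $n$-balls $B_i$ inside a common subdivision of $N_i$'' whose complements ``realize both $M_i$ and $M_{i+1}$ up to PL-homeomorphism'' --- never identifies these balls or explains why the two complements agree, and the appeal to $N_i$ collapsing to both $M_i$ and $M_{i+1}$ does not by itself yield a PL-homeomorphism. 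As written, the converse is therefore not proved, although the intended endgame (Lemma \ref{lemma borde de bolas coincide}) is the right one.
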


\begin{proof} Let $\kappa(\sigma_1,\tau_1),\ldots,\kappa(\sigma_r,\tau_r)$ be a sequence of stellar exchanges taking $M$ to $\tilde{M}$. Then for each $i$, the sequence $$M_i\stackrel{F(\sigma_i,\tau_i)}\longrightarrow N_i\stackrel{F(\tau_i,\sigma_i)}\longleftarrow M_{i+1}=\kappa(\sigma_i,\tau_i)M_i$$ is a factorization and $N_i$ is an $NH$-manifold by Lemma \ref{ultimaprop}.

For the second part of the proof, assume that $M\stackrel{F(\sigma,\tau)}\longrightarrow N\stackrel{F(\rho,\eta)}\longleftarrow\tilde{M}$ are $NH$-factorizations, with $M$ and $\tilde{M}$ $n$-manifolds and $M$ closed. Since $M+\sigma\ast\tau\ast L=\tilde{M}+\rho\ast\eta\ast T$, by a dimension argument and the homogeneity of $M$ and $\tilde M$, it follows that $\sigma\ast\tau\ast L=\rho\ast\eta\ast T$. Hence, $\overline{M-\sigma\ast\partial\tau\ast L}=\overline{N-\sigma\ast\tau\ast L}=\overline{N-\rho\ast\eta\ast T}=\overline{\tilde{M}-\rho\ast\partial\eta\ast T}$. The result now follows from Lemma \ref{lemma borde de bolas coincide}.\end{proof}

\end{document}